\documentclass[12pt]{amsart}
\usepackage[mathscr]{eucal}
\usepackage{amssymb}
\usepackage{amsfonts}
\usepackage{ytableau}

\headheight=8pt     \topmargin=10pt
\textheight=630pt   \textwidth=422pt
\oddsidemargin=18pt \evensidemargin=18pt

\theoremstyle{plain} 
\newtheorem{thm}{Theorem}[section]
\newtheorem{cor}[thm]{Corollary}
\newtheorem{lem}[thm]{Lemma}
\newtheorem{prop}[thm]{Proposition}

\theoremstyle{definition}

\newtheorem{ex}[thm]{Example}
\newtheorem{conj}[thm]{Conjecture}
\newtheorem{prob}[thm]{Problem}

\theoremstyle{remark}
\newtheorem{rem}[thm]{Remark}

\numberwithin{equation}{section}

%%%%%%%%%%%%%%%%%%%%

%% Bbb letters %%%%%
\def\NN{{\mathbb N}}

\def\CC{{\mathbb C}}
\def\AA{{\mathbb A}}

%%mathfrak letters %%%%%
\def\fp{{\mathfrak p}}
\def\fq{{\mathfrak q}}
\def\fm{{\mathfrak m}}

\def\ba{{\mathbf a}}
\def\bb{{\mathbf b}}

%%%%%%%%% categories %%%%%%%
\def\Spec{\operatorname{Spec}}
\def\height{\operatorname{ht}}
\def\Ass{\operatorname{Ass}}

\def\Hilb{\operatorname{Hilb}}
\def\ann{\operatorname{Ann}}

\def\too{\longrightarrow}

\def\Min{\operatorname{Min}}

\def\Im{\operatorname{Im}}

\def\StTab{\operatorname{StTab}}
\def\Tab{\operatorname{Tab}}
\def\supp{\operatorname{supp}}

\def\ISp{I^{\rm Sp}}

\def\ba{\mathbf a}

\def\frJ{{\frak J}}

\def\too{\longrightarrow}

\def\bP{\overline{P}}

\def\chara{\operatorname{char}}

\def\depth{\operatorname{depth}}
\def\<{{\langle}}
\def\>{{\rangle}}

%%%%%  just a convention %%%%%%%%%%%

%%%%%%%%%%%%%%%%%%

\begin{document}
\title[When is a Specht ideal Cohen--Macaulay?]{When is a Specht ideal Cohen--Macaulay?}
%\author{Junzo Watanabe}
%\address{Department of Mathematics, Tokai University, Hiratsuka,, 259-1292 Japan}
%\email{%watanabe.junzo@me.com
%watanabe.junzo@tokai-u.jp}
\author{Kohji Yanagawa}
\address{Department of Mathematics, Kansai University, Suita, Osaka 564-8680, Japan}
\email{yanagawa@kansai-u.ac.jp}
\thanks{The author is partially supported by JSPS Grant-in-Aid for Scientific Research (C) 16K05114.}
\maketitle

\begin{abstract}
For a partition $\lambda$ of $n$, let $\ISp_\lambda$  be the ideal of $R=K[x_1, \ldots, x_n]$ generated by all Specht polynomials of shape $\lambda$. 
%The present paper concerns the Cohen--Macaulay property of $R/\ISp_\lambda$. 
We show that if $R/\ISp_\lambda$ is Cohen--Macaulay then $\lambda$ is of the form either $(a, 1, \ldots, 1)$,  $(a,b)$, or  $(a,a,1)$. We also prove that the converse is true in the $\chara(K)=0$ case. 
%J. Watanabe and the author showed that  $R/\ISp_{(a,1, \ldots,1)}$ is reduced and Cohen--Macaulay.
To show the latter statement,   the radicalness of these ideals and a result of Etingof et al. are crucial. We also remark that  $R/\ISp_{(n-3,3)}$ is {\it not} Cohen--Macaulay if and only if $\chara(K)=2$.
\end{abstract}

\section{Introduction}
Let $n$ be a positive integer. A {\it partition} of $n$ is a sequence $\lambda = (\lambda_1, \ldots, \lambda_l)$ of intergers with $\lambda_1 \ge \lambda_2 \ge \cdots \ge \lambda_l \ge 1$ and $\sum_{i=1}^l \lambda_i =n$.  
A partition $\lambda$ is often represented by its {\it Young diagram}.  The {\it (Young) tableau} of shape $\lambda$ is a bijection from $[n]:=\{1,2, \ldots, n\}$ to the set of boxes in the Young diagram of $\lambda$. 
 For example, the following is a tableau of shape  $(4,2,1)$. 
\begin{equation}\label{ex of T}
\begin{ytableau}
3 & 5 & 1& 7   \\
6 & 2    \\
4 \\
\end{ytableau}
\end{equation}
We say a tableau $T$ is {\it standard}, if all columns (resp. rows) are increasing from top to bottom (resp. from left to right). 

Let $R=K[x_1, \ldots, x_n]$ be a polynomial ring over a field $K$, $\lambda$ a partition of $n$, and $T$ a Young tableau of shape $\lambda$.  Now let $f_T(j)$ denote the difference product of the variables whose subscripts  belong to the $j$-th column of $T$. 
More precisely, if the $j$-th column of $T$ consists of $j_1, j_2, \ldots, j_m$ in the order from top to bottom, then 
$$f_T (j) = \prod_{1 \le s < t \le m} (x_{j_s}-x_{j_t})$$
(if the $j$-th column has only one box, then we set $f_T(j)=1$).  Finally, we set 
$$f_T := \prod_{j=1}^{\lambda_1} f_T(j),$$
and call it the {\it Specht polynomial} of $T$.   For example, if $T$ is the tableau given in \eqref{ex of T}, then $f_T=(x_3-x_6)(x_3-x_4)(x_6-x_4)(x_5-x_2).$ 

If $\lambda$ is a partition of $n$, the symmetric group $S_n$ acts on the vector space $U_\lambda$ of $R$ spanned by 
$$\{ \, f_T  \mid \text{$T$ is a Young tableau of shape $\lambda$ } \}.$$
(Literature construct  $U_\lambda$ using  suitable equivalent classes of Young tableaux, not polynomials in $R$.  For example,  \cite{Sa} uses the notion called {\it Young polytabloids}.  Of course, such a  construction gives the same modules as ours up to isomorphism.) 
 An $S_n$-module of this form is called a {\it Specht module}, and very important in the  theory of  symmetric groups.  In fact,  if $\chara(K)=0$, they give a complete list of irreducible representations of $S_n$, if we consider all possible partitions $\lambda$ of $n$.

Specht modules occasionally appear in the study of commutative algebra. For example, 
\cite{LefPro} decomposes the artinian ring $R/(x_1^d, \ldots, x_n^d)$ into direct sum of Specht modules under the natural $S_n$-action. However, the present paper concerns the {\it ideal}  
$$\ISp_\lambda := (U_\lambda) =(\, f_T  \mid \text{$T$ is a Young tableau of shape $\lambda$ })$$
 of the polynomial ring $R$ itself (throughout the paper, we exclude the trivial partition $(n)$ of $n$, while some results make sense if we put $\ISp_{(n)} =R$). We focus on the following problem.

\begin{prob}\label{main prob}
When is $R/\ISp_\lambda$ Cohen--Macaulay?
\end{prob}

For  many $\lambda$, $R/\ISp_\lambda$ is even non-pure (i.e., minimal primes have different dimensions).  
Moreover, for some $\lambda$, the Cohen--Macaulay property of $R/\ISp_\lambda$ depends on $\chara(K)$.   For example, $R/\ISp_{(n-3,3)}$ is Cohen--Macaulay if and only if $\chara(K) \ne 2$ (Theorem~\ref{3-ji shiki}). 

For $\lambda=(\lambda_1, \ldots, \lambda_l)$, it is not difficult to see that $\height(\ISp_\lambda) = \lambda_1$ (see Lemma~\ref{lambda_1+1}). 
Moreover, in Proposition~\ref{CM ISP}, we see that if $R/\ISp_\lambda$ is Cohen--Macaulay then one of the following conditions  is satisfied. 
\begin{itemize}
\item[(1)] $\lambda =(n-d, 1, \ldots, 1)$, 
\item[(2)] $\lambda =(n-d,d)$, 
\item[(3)] $\lambda =(a,a,1)$. 
\end{itemize}

If $\lambda =(n-d, 1, \ldots, 1)$, then $\ISp_\lambda$ is generated by all maximal minors of a $(d+1) \times n$ Vandermonde matrix,  and Junzo Watanabe and the author (\cite{WY}) showed that  $R/\ISp_\lambda$ is reduced and Cohen--Macaulay in this case. So it remains  to consider the cases (2) and (3). In these cases, we have 
$$\sqrt{\ISp_\lambda}= \bigcap_{\substack{F \subset [n] \\ \# F = \lambda_1+1}} (x_i-x_j \mid i, j \in F).$$ 
Etingof et al. \!\!\cite{EGL} studied such ideals, and their result states that $R/\sqrt{\ISp_\lambda}$ is Cohen--Macaulay if $\chara(K)=0$ (in the cases (2) and (3)).

%If the answer of Problem~\ref{sub prob} is yes, then Problem~\ref{main prob} is solved in the $\chara(K)=0$ case (by virtue of a result of \cite{EGL}). 

On the other hand, in Theorems~\ref{ISp_{(n-k,k)} is radical} and \ref{ISp_{(a,a,1)} is radical}, we show  that $R/\ISp_{(n-d,d)}$ and $R/\ISp_{(a,a,1)}$ are reduced. Hence they are Cohen--Macaulay if $\chara(K)=0$ (by virtue of a result of \cite{EGL}). Summing up,  in the characteristic 0 case, $R/\ISp_\lambda$ is Cohen--Macaulay if and only if one of the above conditions (1), (2) or (3) is satisfied.  

Another application of the radicalness of $\ISp_{(n-d,d)}$ is a new characterization of Catalan numbers. 
More precisely, in the polynomial ring $K[x_1, \ldots, x_{2n}]$, consider the ideal 
$$I:=  \bigcap_{\substack{F \subset [2n] \\ \# F = n+1}} (x_i-x_j \mid i, j \in F).$$
Since $I= \sqrt{\ISp_{(n,n)}}= \ISp_{(n,n)}$, we have $\mu(I) =C_n$, where $C_n$ is the $n$-th Catalan number $\displaystyle \frac{1}{2n+1}\binom{2n+1}{n}$.

%In the sequel, let $R' =R[x_{n+1}] =K[x_1, \ldots, x_{n+1}]$ be the polynomial ring. 
%Note that $\IVd_{n+1,k}$ is an ideal of $R''$. 

\section{Minimal primes  and the necessity condition for the CM-ness}
Let $R=K[x_1, \ldots, x_n]$ be a polynomial ring  over a field $K$. 
We assume that $K$ is algebraically closed for the simplicity. However,  for all results of the present paper, this assumption can be easily dropped.  

For an ideal $I \subset R$, set $V(I):=\{ \fp \mid \fp \in \Spec R,  \fp \supset I\}$ as usual. 
For $\ba=(a_1, \ldots, a_n) \in K^n$, let $\fm_\ba$ denote the maximal ideal 
$(x_1 -a_1, x_2 - a_2, \ldots, x_n - a_n)$ of $R$.  
By abuse of notation, we just write $\ba \in V(I)$ to mean $\fm_\ba \in V(I)$. 
Note that $\ba \in V(I)$, if and only if $\ba$ belongs to the algebraic subset 
of $\AA^n$ defined by $I$, if and only if $f(\ba)=0$ for all $f \in I$. 

For the definition of  the Specht ideal $\ISp_\lambda$, see the previous section. 
Let us begin to study the Cohen--Macaulay property of $R/\ISp_\lambda$. 
If $\lambda_2=1$, then  $\ISp_\lambda$ is the determinantal ideal of a  Vandermonde like  matrix, and  Watanabe and the author (\cite{WY}) showed that  $R/\ISp_\lambda$ is reduced and Cohen--Macaulay in this case. 
So we mainly treat the case $\lambda_2 > 1$ in this paper.

\medskip

The following fact immediately follows from the definition. 

\begin{lem}\label{0 point}
For $\ba =(a_1, \ldots, a_n) \in K^n$, $\ba  \in V(\ISp_\lambda)$ if and only if $\ba$ satisfies the following condition; 

\noindent$(*)$ For any tableau $T$ of shape $\lambda$, there exist two distinct integers $i, j \in [n]$ such that $a_i=a_j$ and $i,j$ appear in the same column of $T$.   
\end{lem}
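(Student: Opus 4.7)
The plan is to unpack definitions on both sides and observe that they match. Since $\ISp_\lambda$ is generated by the Specht polynomials $\{f_T\}$ as $T$ runs over tableaux of shape $\lambda$, a point $\ba \in \AA^n$ lies in $V(\ISp_\lambda)$ if and only if $f_T(\ba) = 0$ for every such $T$. (The ``if'' direction uses that every element of $\ISp_\lambda$ is an $R$-linear combination of the $f_T$, so vanishing on generators forces vanishing on the whole ideal; the ``only if'' direction is immediate since each $f_T$ lies in $\ISp_\lambda$.)

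Next, I would fix a single tableau $T$ and read off when $f_T(\ba) = 0$. By definition
\[
f_T \;=\; \prod_{j=1}^{\lambda_1} f_T(j) \;=\; \prod_{j=1}^{\lambda_1} \prod_{1 \le s < t \le m_j}(x_{j_s} - x_{j_t}),
\]
where $j_1,\ldots,j_{m_j}$ are the entries in the $j$-th column of $T$. Since $K$ is a domain, $f_T(\ba) = 0$ if and only if at least one factor vanishes at $\ba$, i.e.\ if and only if there exist indices $i \ne j$ appearing together in some column of $T$ with $a_i = a_j$.

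Combining these two observations, $\ba \in V(\ISp_\lambda)$ if and only if, for every tableau $T$ of shape $\lambda$, one can find such a coincidence of entries in the same column of $T$. This is precisely condition $(*)$.

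There is really no obstacle here; the statement is a direct translation of the definition of $\ISp_\lambda$ into the language of points of $\AA^n$, and the only thing one needs to invoke beyond the definitions is that the vanishing of an ideal at a point is equivalent to the vanishing of any chosen set of generators.
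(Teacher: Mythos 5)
Your proof is correct and is exactly the definitional unpacking the paper intends: the paper gives no argument beyond "immediately follows from the definition," and your two observations (vanishing at a point is checked on the generators $f_T$, and $f_T(\ba)=0$ iff some column factor $x_i-x_j$ vanishes since $K$ is a domain) are precisely that unpacking.
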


Let $\Pi = \{ F_1, \ldots, F_m \}$ be a partition of $[n] := \{1,2, \ldots, n\}$, that is, $[n]=\coprod_{i=1}^m F_i$ and $F_i \ne \emptyset$ for all $i$.  We call the ideal 
$$P_\Pi := ( \, x_i -x_j \mid \text{$i,j \in F_k$ for $k=1,2, \ldots, m$} \, ) \subset R$$ 
the {\it partition ideal} of $\Pi$. Clearly, this is a prime ideal with $R/P_\Pi \cong K[X_1, \ldots, X_m]$. 
Hence we have $\dim R/P_\Pi = m$ and  $\height(P_\Pi)=n-m$.  
It is easy to see that if an ideal $I \subset R$ is generated by elements of the form $x_i -x_j$ then $I=P_\Pi$ for some partition $\Pi$ of $[n]$. 

\begin{lem}\label{partition} 
A minimal prime of $\ISp_\lambda$ is the partition ideal  $P_\Pi$ of some $\Pi$. 
\end{lem}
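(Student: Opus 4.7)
The plan is to show that any minimal prime $\fp$ of $\ISp_\lambda$ is generated by linear forms of the shape $x_i-x_j$; the elementary fact recalled just before the lemma---that every ideal generated by such forms equals $P_\Pi$ for some partition $\Pi$ of $[n]$---then gives the conclusion.

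First, I would note that, by construction, every generator $f_T$ of $\ISp_\lambda$ is a product of linear forms $(x_{j_s}-x_{j_t})$. Given a minimal prime $\fp$ of $\ISp_\lambda$, I would introduce the auxiliary ideal
$$J := (\, x_i - x_j \mid 1 \le i,j \le n,\ x_i - x_j \in \fp \,) \subseteq \fp,$$
which is generated by elements of the form $x_i-x_j$ and is therefore equal to $P_\Pi$ for some partition $\Pi$ of $[n]$; in particular $J$ is prime.

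Next, I would verify that $\ISp_\lambda \subseteq J$. For every Young tableau $T$ of shape $\lambda$, the generator $f_T \in \ISp_\lambda \subseteq \fp$ is a product of factors of the form $x_i-x_j$; since $\fp$ is prime, at least one such factor belongs to $\fp$, hence to $J$ by definition, and then divisibility forces $f_T \in J$. Thus $\ISp_\lambda \subseteq J \subseteq \fp$. Since $J$ is a prime ideal containing $\ISp_\lambda$ and $\fp$ is \emph{minimal} among such primes, the inclusions collapse to $\fp = J = P_\Pi$, which is exactly what is claimed.

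I do not anticipate any real obstacle here: the argument is only using primality of $\fp$ together with the structural remark, made just before the statement, that any ideal generated by differences of variables is a partition ideal. The statement of Lemma~\ref{0 point} is not needed for this proof; it becomes relevant only later, when one wants to identify \emph{which} partitions $\Pi$ actually occur as minimal primes.
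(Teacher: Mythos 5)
Your argument is correct. Where the paper argues geometrically --- each $V((f_T))$ is a union of hyperplanes $V(x_i-x_j)$ coming from the linear factors of $f_T$, so the irreducible components of $V(\ISp_\lambda)$ are intersections of such hyperplanes, i.e.\ of the form $V(P_\Pi)$ --- you argue purely ideal-theoretically: you form the auxiliary ideal $J=(x_i-x_j \mid x_i-x_j\in\fp)\subseteq\fp$, observe it is a partition ideal (hence prime), use primality of $\fp$ to pull one linear factor of each $f_T$ into $\fp$ and hence $f_T$ into $J$, and then invoke minimality of $\fp$ over $\ISp_\lambda$ to collapse $\fp=J=P_\Pi$. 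The two proofs rest on the same underlying fact (a prime containing a product of the linear forms $x_i-x_j$ contains one of them, which is the algebraic counterpart of the paper's hyperplane decomposition), but your version buys a little extra: it never passes through the ideal--variety dictionary, so it works verbatim over an arbitrary field, whereas the paper's phrasing leans on the standing assumption that $K$ is algebraically closed (an assumption the author says can be dropped, and your argument shows how). The paper's version is shorter and supplies the geometric picture of the components as linear subspaces. The only point worth making explicit in your write-up is that $f_T$ is indeed a nonempty product of factors $x_i-x_j$, which holds because the trivial partition $(n)$ is excluded; and, as you say, Lemma~\ref{0 point} is not needed here.
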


\begin{proof}
Since $V((f_T))$ for a tableau $T$ is a union of hyperplanes of the form $V(x_i-x_j)$ for some  $i \ne j$, 
the irreducible components of $V(\ISp_\lambda)$ are intersections of these hyperplanes, that is, linear spaces of the form $V(P_\Pi)$ for some $\Pi$. 
\end{proof}

For a subset $F \subset [n]$ with $\#F \ge 2$,
consider the prime ideal $$P_F = ( \, x_i -x_j \mid i,j \in F \, )$$ of $R$. 
Clearly, $P_F$ is a special case of  partition ideals, and we have  $\height(P_F) = \# F -1$. 

\begin{prop}\label{lambda_1+1}
Let $\lambda=(\lambda_1, \ldots, \lambda_l)$ be a partition of $n$. 
For a subset $F \subset [n]$ with $\#F = \lambda_1+1$, $P_F$ is a minimal prime of  $\ISp_\lambda$. 
Moreover, we have $\height(\ISp_\lambda) = \lambda_1$. 
\end{prop}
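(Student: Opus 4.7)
The plan is to combine the easy containment $\ISp_\lambda \subset P_F$ with a matching height lower bound; the minimality of $P_F$ then follows for free. For the containment, any tableau $T$ of shape $\lambda$ has exactly $\lambda_1$ columns, so by pigeonhole among the $\lambda_1 + 1$ indices of $F$ some pair $i \ne j$ in $F$ must lie in a common column of $T$. Then $(x_i - x_j)$ is a factor of $f_T$ and belongs to $P_F$, forcing $f_T \in P_F$. This gives $\ISp_\lambda \subset P_F$ and $\height(\ISp_\lambda) \le \height(P_F) = \lambda_1$.

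For $\height(\ISp_\lambda) \ge \lambda_1$, invoke Lemma~\ref{partition}: every minimal prime of $\ISp_\lambda$ has the form $P_\Pi$, with $\height(P_\Pi) = n - m$ for $m = \#\Pi$. I argue contrapositively: if $m > n - \lambda_1$, then $\ISp_\lambda \not\subset P_\Pi$. It suffices to produce a tableau $T$ of shape $\lambda$ in which no column contains two entries from a common block of $\Pi$; for such a $T$ every factor $x_i - x_j$ of $f_T$ lies outside $P_\Pi$, so $f_T \notin P_\Pi$ by primeness, contradicting $\ISp_\lambda \subset P_\Pi$. Writing the block sizes in weakly decreasing order as $s_1 \ge \cdots \ge s_m \ge 1$, the assumption $n - m \le \lambda_1 - 1$ gives $\sum_{i=1}^k s_i \le n - (m - k) \le \lambda_1 - 1 + k \le \sum_{i=1}^k \lambda_i$ for $1 \le k \le l$ (using $\lambda_i \ge 1$), while $\sum_{i=1}^k s_i \le n$ disposes of $k > l$. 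These are precisely the Gale--Ryser inequalities for the sequences $(s_i)$ and $(\lambda'_j)$ with $\lambda'$ the conjugate of $\lambda$, so a $\{0,1\}$-matrix $M$ with these row and column sums exists. Interpreting $M_{ij} = 1$ as ``block $B_i$ occupies one cell of column $j$'' and placing the elements of each $B_i$ into its $s_i$ designated columns yields the desired $T$. Hence $\height(\ISp_\lambda) = \lambda_1$, and since $P_F$ is a prime containing $\ISp_\lambda$ of this minimal height, it is a minimal prime.

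The main obstacle is producing the tableau $T$ compatible with the block structure of $\Pi$; Gale--Ryser handles this cleanly once the inequalities above are verified, but at the cost of citing an external combinatorial theorem. A self-contained alternative that targets only the minimality of $P_F$ is to construct $T$ directly whenever $P_\Pi \subsetneq P_F$: such a $\Pi$ must strictly refine the partition $\Pi_F := \{F\} \cup \{\{k\} : k \notin F\}$, so $F$ splits into at least two sub-blocks in $\Pi$; placing $\lambda_1$ of the elements of $F$ across row $1$ and the remaining one at position $(2,1)$ so that the two column-$1$ entries from $F$ belong to distinct sub-blocks, and filling the remaining boxes with $[n] \setminus F$ (singletons in $\Pi$), yields a $T$ with $f_T \notin P_\Pi$.
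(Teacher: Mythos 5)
Your proof is correct, and its skeleton matches the paper's: the containment $\ISp_\lambda \subset P_F$ by pigeonhole on the $\lambda_1$ columns, and the height lower bound by showing, via Lemma~\ref{partition}, that any partition ideal $P_\Pi$ with $\height(P_\Pi)<\lambda_1$ fails to contain some Specht polynomial, which is exactly the problem of filling the diagram of $\lambda$ so that no column receives two indices from the same block of $\Pi$. The genuine difference is how that filling is produced. The paper constructs it by hand: it paints rows $2,\dots,l$ with $n-\lambda_1$ distinct colors in reading order and then distributes the remaining colors and repetitions along the first row, a self-contained but somewhat delicate bookkeeping argument (and it certifies $f_T\notin P_\Pi$ by evaluating at a point with one value per block, i.e.\ via condition $(*)$ of Lemma~\ref{0 point}, whereas you argue directly from primeness of $P_\Pi$ and the factorization of $f_T$ --- these are equivalent). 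You instead encode the filling as a $\{0,1\}$-matrix with row sums the block sizes $s_i$ and column sums the conjugate parts $\lambda'_j$, and invoke Gale--Ryser after checking the dominance condition $\sum_{i\le k}s_i\le\sum_{i\le k}\lambda_i$, which indeed follows from $n-m\le\lambda_1-1$ exactly as in your chain of inequalities (note your condition is the conjugate form of the usual Gale--Ryser inequalities, equivalent because conjugation reverses dominance; also $k=1$ already gives $s_1\le\lambda_1$, so the matrix is well defined). What each route buys: the paper stays elementary and citation-free at the cost of an ad hoc construction; your reduction makes the existence statement and its verification transparent and robust, at the cost of importing a nontrivial combinatorial theorem. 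Your closing alternative --- refining only the minimality of $P_F$ by splitting $F$ into two sub-blocks and placing representatives of distinct sub-blocks in column 1 --- is also sound and is essentially the hands-on minimality argument one would give without the height computation, though by itself it does not yield $\height(\ISp_\lambda)=\lambda_1$, as you note.
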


\begin{proof}
First, we will show that $P_F \supset \ISp_{\lambda}$ for $F \subset [n]$ with $\# F = \lambda_1+1$. It suffices to show that any $\ba \in  V(P_F)$ satisfies $\ba \in V(\ISp_\lambda)$. However, if $\ba \in  V(P_F)$, then $a_i =a_j$ for all $i. j \in F$, and it satisfies the condition $(*)$ of Lemma~\ref{0 point} by the pigeonhole principle. 

Next,  consider a partition $\Pi=\{ F_1, \ldots, F_d \}$ of $[n]$. 
Set $c_i= \# F_i$ for each $i$. And we may assume that $c_1 \ge c_2 \ge \cdots$. 
We will show that if  $\height(P_\Pi) < \lambda_1$ 
(equivalently, $d > n-\lambda_1 = \lambda_2 +\cdots + \lambda_l$) then $P_\Pi \not \supset \ISp_\lambda$.  
We can take $\ba=(a_1, \ldots, a_n) \in V(P_\Pi)$ so that $a_i =a_j$ if and only if $i,j \in F_k$ for some $k$. 
To show $P_\Pi \not \supset \ISp_\lambda$, it suffices to prove that $\ba \not \in V(\ISp_\lambda)$, equivalently, $\ba$ does not satisfy the condition $(*)$ of Lemma~\ref{0 point}. This is also equivalent to that we can assign one of $d$-colors  to each box in the Young diagram of $\lambda$ so that the number of boxes painted in the $i$-th color  is exactly $c_i$, and 
any two boxes in the same column have different colors.  The existence of such coloring is illustrated in the following  way (here, integers represent colors of boxes). 
First, we paint the boxes in the second  line to the last line as follows. 
In other words, we paint these boxes just like counting them in the ``western letter-writing" order.  
$$
\ytableausetup
{mathmode, boxsize=3.2em}
\begin{ytableau}
{}& & \none[\cdots]&  & & & & \none[\cdots]   \\
1 & 2  & \none[\cdots] & \lambda_2 -2 & \lambda_2-1 & \lambda_2 \\
\lambda_2+1 & \lambda_2+2 & \none[\cdots] & \scriptstyle  \lambda_2+\lambda_3-1 & \scriptstyle  \lambda_2+\lambda_3 \\
\scriptstyle  \lambda_2+\lambda_3+1 & \scriptstyle  \lambda_2+\lambda_3+2 & \none[\cdots] \\
\none[\vdots]  & \none[\vdots] 
\end{ytableau} 
$$
Note that we have used $s:= \lambda_2 + \cdots + \lambda_l$ colors to paint  the boxes in the second  line to the last line. Note that $s = n-\lambda_1 <d$. Finally, we paint the boxes in the first line as follows. 
$$
\ytableausetup
{mathmode, boxsize=2.0em}
\begin{ytableau}
\scriptstyle s+1 &  \scriptstyle s+2 & \none[\cdots] &d&1&1& \none[\cdots] &1 & 2 & 2 & \none[\cdots] & 2 & 3&\none[\cdots]
\end{ytableau} 
$$ 
After the position   
$
\ytableausetup
{mathmode, boxsize=1.3em}
\begin{ytableau}
1&1& \none[\cdots] 
\end{ytableau} 
$
, each color $i$ appears $c_i-1$ times. Now it is easy to see that this coloring satisfies the expected condition. 

By Lemma~\ref{partition}, the claim we have just shown implies that $\height(\ISp_\lambda) \ge \lambda_1$. 
On the other hand，we know that  $P_F \supset \ISp_{\lambda}$ for $F \subset [n]$ with $\# F = \lambda_1+1$. 
Since $\height(P_F) =\lambda_1$,  $P_F$ is a minimal prime of $\ISp_\lambda$, and $\height(\ISp_\lambda)=\lambda_1$. 
\end{proof}
.

For an integer $k$ with  $2 \le k \le n$, set 
$$I_{n,k} := \bigcap_{\substack{F \subset [n], \\ \#F =k}} P_F.$$
Clearly, $\height(I_{n,k}) =k-1$.

\begin{prop}\label{minimal primes}
Let $\lambda=(\lambda_1, \ldots, \lambda_l)$ be a partition of $n$. 
We always have 
$$\sqrt{\ISp_\lambda} \subset I_{n, \lambda_1 +1},$$
and the equality holds if and only if $\lambda_{l-1}=\lambda_1$. 
\end{prop}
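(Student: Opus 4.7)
The plan is to describe the minimal primes of $\ISp_\lambda$ combinatorially via their block-size partitions and then reduce the equality to a dominance-order condition on partitions of $n$.

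The inclusion $\sqrt{\ISp_\lambda}\subseteq I_{n,\lambda_1+1}$ is immediate from Proposition~\ref{lambda_1+1}: each prime $P_F$ with $\#F=\lambda_1+1$ contains $\ISp_\lambda$, so $I_{n,\lambda_1+1}$ contains $\ISp_\lambda$, and hence also its radical.

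For the equality, I would first recast the question. Following the argument of Proposition~\ref{lambda_1+1}, $P_\Pi\supseteq\ISp_\lambda$ is equivalent to the non-existence of a coloring of the Young diagram of $\lambda$ using color $i$ exactly $c_i=\#F_i$ times so that every column has pairwise distinct colors. A pigeonhole on the $k$ largest color classes bounds their total size by $\sum_j\min(k,\lambda'_j)=\sum_{i=1}^k\lambda_i$, so a coloring forces the sorted block-size partition $\mu(\Pi)$ to satisfy $\mu(\Pi)\preceq\lambda$ in dominance. Conversely, if $\mu(\Pi)\preceq\lambda$, a semistandard Young tableau of shape $\lambda$ and content $\mu(\Pi)$ exists by Kostka positivity, and its strictly increasing columns furnish the coloring. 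Thus $P_\Pi\supseteq\ISp_\lambda$ iff $\mu(\Pi)\not\preceq\lambda$. Since $P_\Pi$ is prime, moreover, $I_{n,\lambda_1+1}\subseteq P_\Pi$ holds iff $P_F\subseteq P_\Pi$ for some $F$ with $\#F=\lambda_1+1$, iff $\Pi$ has a block of size at least $\lambda_1+1$, iff $\mu_1(\Pi)\geq\lambda_1+1$. Hence $\sqrt{\ISp_\lambda}=I_{n,\lambda_1+1}$ is equivalent to the purely combinatorial statement that every partition $\mu$ of $n$ with $\mu\not\preceq\lambda$ satisfies $\mu_1\geq\lambda_1+1$.

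It then remains to verify this dominance statement. If $\lambda_{l-1}=\lambda_1=:a$, so $\lambda=(a^{l-1},b)$ with $b\leq a$, then any $\mu$ with $\mu_1\leq a$ satisfies $\sum_{i=1}^k\mu_i\leq ka=\sum_{i=1}^k\lambda_i$ for $k\leq l-1$, while for $k\geq l$ both partial sums equal $n$; thus $\mu\preceq\lambda$, so contrapositively $\mu\not\preceq\lambda$ forces $\mu_1\geq\lambda_1+1$. Conversely, suppose $\lambda_{l-1}<\lambda_1$ and let $k=\max\{i:\lambda_i=\lambda_1\}$, so $k\leq l-2$. With $q=\lfloor n/\lambda_1\rfloor$ and $r=n-q\lambda_1$, take $\mu$ to consist of $q$ copies of $\lambda_1$ followed by $r$ (omitted if zero); then $\mu_1=\lambda_1$. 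If $q\geq k+1$, then $\sum_{i=1}^{k+1}\mu_i=(k+1)\lambda_1>k\lambda_1+\lambda_{k+1}=\sum_{i=1}^{k+1}\lambda_i$ using $\lambda_{k+1}<\lambda_1$; if $q=k$, then $l\geq k+2$ forces $r=\sum_{i>k}\lambda_i>\lambda_{k+1}$, giving the same strict inequality. Either way $\mu\not\preceq\lambda$ while $\mu_1\leq\lambda_1$, so the strict inclusion $\sqrt{\ISp_\lambda}\subsetneq I_{n,\lambda_1+1}$ follows.

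I expect the translation $P_\Pi\supseteq\ISp_\lambda\iff\mu(\Pi)\not\preceq\lambda$ to be the main hurdle, as the existence direction relies on the nontrivial construction of semistandard Young tableaux for dominated contents (Kostka positivity). Once this is in place the remainder of the argument is an elementary manipulation of dominance order.
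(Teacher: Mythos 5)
Your argument is correct, but it takes a genuinely different route from the paper. The paper handles the two directions by explicit constructions: for $\lambda_{l-1}=\lambda_1$ it exhibits a single row-filling tableau (entries $1,\dots,n$ in reading order), which works because every column's entries differ by at least $\lambda_1$ while every block of a point outside $V(I_{n,\lambda_1+1})$ has size at most $\lambda_1$; for $\lambda_{l-1}<\lambda_1$ it writes down one specific degenerate prime ($m$ blocks of size $\lambda_1$ plus one block of size $\lambda_{m+1}+1$) lying in $V(\ISp_\lambda)\setminus V(I_{n,\lambda_1+1})$. You instead characterize \emph{all} partition ideals containing $\ISp_\lambda$: $P_\Pi\supseteq\ISp_\lambda$ iff the block-size partition $\mu(\Pi)$ is not dominated by $\lambda$, using the pigeonhole bound $\sum_j\min(k,\lambda_j')=\sum_{i\le k}\lambda_i$ in one direction and positivity of Kostka numbers ($K_{\lambda\mu}>0$ iff $\mu\preceq\lambda$) in the other, and then reduce the proposition to an elementary dominance computation, which you carry out correctly (note only that $q=\lfloor n/\lambda_1\rfloor\ge k$ since $k\lambda_1\le n$, so your two cases are exhaustive). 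Your approach buys more than the proposition --- it describes $\Min(\ISp_\lambda)$ for arbitrary $\lambda$ --- at the cost of importing the nontrivial semistandard-tableau existence theorem, exactly where the paper gets by with one hand-made tableau valid in the special shape $(\lambda_1^{\,l-1},\lambda_l)$. One small point to tighten: the equivalence ``$P_\Pi\supseteq\ISp_\lambda$ iff no coloring exists'' is asserted with a reference to the earlier proposition, but the paper only ever uses the direction ``coloring exists $\Rightarrow$ $P_\Pi\not\supseteq\ISp_\lambda$''; the converse, which your strictness argument needs, should be justified, e.g.\ by noting that $P_\Pi$ is prime and each $f_T$ is a product of differences $x_i-x_j$, so $f_T\in P_\Pi$ iff some column of $T$ contains two indices from one block, whence $\ISp_\lambda\subseteq P_\Pi$ iff no column-rainbow coloring with content $\mu(\Pi)$ exists. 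With that one line added, the proof is complete.
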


\begin{proof}
The former assertion immediately follows from Proposition~\ref{lambda_1+1}.  To prove the latter assertion,  assume that $\lambda_{l-1} < \lambda_1$. Set $m: = \max\{ i \mid \lambda_i = \lambda_1 \}$. Note that $m < l-1$ now. 
Take $\ba \in K^n$ so that $a_i=a_j$ if and only if $(k-1) \lambda_1 +	1 \le i,j \le k  \lambda_1$ for some $1 \le k \le m$, or $m \lambda_1 +1 \le i,j \le m \lambda_1 + \lambda_{m+1}+1$. 
Then it is easy to see that $\ba$ satisfies the condition $(*)$ of Lemma~\ref{0 point}, and hence $\ba \in V(\ISp_\lambda)$. On the other hand, 
since there is no $F \subset [n]$ with $\# F = \lambda_1+1$ such that $a_i =a_j$ for all $i,j \in F$, we have 
$\ba \not \in V( I_{n, \lambda_1 +1})$. This means that if we set 
$$F_1: =\{ 1,2, \ldots, \lambda_1 \}, \quad F_2: =\{ \lambda_1 +1,  \lambda_1+ 2, \ldots, 2\lambda_1 \}, \ldots,$$ 
$$ F_m: =\{ (m-1)\lambda_1 +1, \ldots,  m\lambda_1 \}, \, F_{m+1}: =\{ \, m\lambda_1 +1,  m\lambda_1+ 2, \ldots, m\lambda_1 +\lambda_{m+1}+1  \, \}, $$
and
$$P:=(x_i-x_j \mid  i,j \in F_k \ \text{for $k=1, \ldots, m+1$})$$
then we have $P \in V(\ISp_\lambda)$ but  $P \not \in V(I_{n, \lambda_1+1})$. Hence $\sqrt{\ISp_\lambda} \ne I_{n, \lambda_1 +1}.$

Next,  assume that $\lambda_{l-1}=\lambda_1$.  To show $\sqrt{\ISp_\lambda} = I_{n, \lambda_1 +1}$ (equivalently, $\sqrt{\ISp_\lambda} \supset I_{n, \lambda_1 +1}$ now), it suffices to show that $\ba \not \in V(I_{n, \lambda_1+1})$ implies $\ba \not \in V(\ISp_\lambda)$. Set $t:=\# \{ a_1, \ldots, a_n \}$. 
By the symmetry, we may assume that the indices are ``sorted'' as follows; these are integers 
$0=m_1 < m_2 < \cdots < m_t=n$ such that $a_i =a_j$ if and only if $m_k +1 \le i,j \le m_{k+1}$ for some $1 \le k < t$.  By the assumption, the cardinality of each block (i.e., $m_{k+1}-m_k$ for each $k$) is less than of equal to $\lambda_1$.  
Here, considering the following  tableau 
$$
\ytableausetup
{mathmode, boxsize=2.3em}
\begin{ytableau}
1 & 2  & 3& \none[\dots]
& \scriptstyle  \lambda_1 \\
\scriptstyle  \lambda_1+1  & \scriptstyle  \lambda_1+2 & \scriptstyle  \lambda_1+3 & \none[\dots]
& \scriptstyle  2 \lambda_1 \\
\scriptstyle  2\lambda_1+1  & \scriptstyle  2\lambda_1+2 & \scriptstyle  2\lambda_1+3 & \none[\dots]
& \scriptstyle  3 \lambda_1 \\
\none[\vdots] & \none[\vdots]
& \none[\vdots] & \none & \none[\vdots]\\
\end{ytableau}
$$
of shape $\lambda$, we see that  $\ba \not \in V(\ISp_\lambda)$.  
In fact, for any distinct $i,j$ in the same column of the above tableau, we have $a_i \ne a_j$. 
So $\ba$ does not satisfy the condition $(*)$, and hence $\ba \not \in V(\ISp_\lambda)$.   
\end{proof}

\begin{cor}\label{ISp  non pure}
Let $\lambda=(\lambda_1, \ldots, \lambda_l)$ be a partition of $n$. 
Then $\sqrt{\ISp_\lambda}$ is pure dimensional (i.e., all minimal primes of $\ISp_\lambda$ have the same height) if and only if 
$\lambda_{l-1}=\lambda_1$ or $\lambda_2 = 1$. 
\end{cor}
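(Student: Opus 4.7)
The plan is to treat three cases based on whether $\lambda_{l-1}=\lambda_1$, $\lambda_2=1$, or neither.

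For the reverse direction $(\Leftarrow)$, the two ``good'' cases follow from tools already established. If $\lambda_{l-1}=\lambda_1$, Proposition~\ref{minimal primes} gives $\sqrt{\ISp_\lambda}=I_{n,\lambda_1+1}=\bigcap_{\#F=\lambda_1+1}P_F$; the $P_F$'s are pairwise incomparable primes of height $\lambda_1$, hence exactly the minimal primes of this intersection, so $\sqrt{\ISp_\lambda}$ is pure of height $\lambda_1$. If $\lambda_2=1$, then $\ISp_\lambda$ is the ideal of maximal minors of a Vandermonde-type matrix and \cite{WY} gives that $R/\ISp_\lambda$ is Cohen--Macaulay, in particular pure.

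For the forward direction, I argue by contrapositive: assume $\lambda_{l-1}<\lambda_1$ and $\lambda_2\ge 2$, and produce a minimal prime of $\ISp_\lambda$ of height strictly greater than $\lambda_1$. The natural candidate is the prime $P=P_\Pi$ constructed in the second half of the proof of Proposition~\ref{minimal primes}, with nontrivial blocks $F_1,\dots,F_m$ of size $\lambda_1$ and $F_{m+1}$ of size $\lambda_{m+1}+1$ (plus singletons). A direct count gives $\height P = m(\lambda_1-1)+\lambda_{m+1}$, which under our hypothesis strictly exceeds $\lambda_1$: if $m=1$ then $\lambda_{m+1}=\lambda_2\ge 2$ forces the height to be at least $\lambda_1+1$; if $m\ge 2$ then $\lambda_1\ge\lambda_2\ge 2$ gives $m(\lambda_1-1)\ge\lambda_1$, so again the height exceeds $\lambda_1$.

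The remaining step is to verify that $P$ is actually minimal among primes containing $\ISp_\lambda$, not merely some prime containing it. Every prime minimal over $\ISp_\lambda$ is a partition ideal by Lemma~\ref{partition}, and $P_{\Pi'}\subsetneq P_\Pi$ iff $\Pi'$ strictly refines $\Pi$, i.e., is obtained by splitting some block of $\Pi$. For each such $\Pi'$, I would construct a Young tableau of shape $\lambda$ whose columns have pairwise distinct $\Pi'$-block labels, via a coloring/bipartite matching argument in the spirit of the proof of Proposition~\ref{minimal primes}; Lemma~\ref{0 point} then yields $P_{\Pi'}\not\supset\ISp_\lambda$. The main obstacle is this combinatorial check: it exploits the fact that every block of $\Pi$ already has size at most $\lambda_1$ (the number of columns of the Young diagram), and requires a case analysis distinguishing splits of a size-$\lambda_1$ block $F_i$ from splits of the special $F_{m+1}$ of size $\lambda_{m+1}+1$, essentially verifying a Gale--Ryser style majorization of the block-size multiset of $\Pi'$ by $\lambda$.
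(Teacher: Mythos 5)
You follow essentially the same route as the paper: the backward direction is handled exactly as there (Proposition~\ref{minimal primes} when $\lambda_{l-1}=\lambda_1$, and the Cohen--Macaulayness from \cite{WY} when $\lambda_2=1$), and for the forward direction you exhibit the same prime $P=P_\Pi$ from the proof of Proposition~\ref{minimal primes}, with the correct height count $\height P=m(\lambda_1-1)+\lambda_{m+1}>\lambda_1$. The one step you do not carry out, and explicitly flag as the ``main obstacle'', is the verification that $P$ is a \emph{minimal} prime of $\ISp_\lambda$. The paper dispatches this in one sentence (a point of $V(P')$ losing one equation $a_i=a_j$ fails condition $(*)$ of Lemma~\ref{0 point}); your more systematic plan, reducing via Lemma~\ref{partition} to partition ideals $P_{\Pi'}$ with $\Pi'$ refining $\Pi$ and then checking a coloring criterion, is sound, so the gap is one of execution rather than of idea. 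For completeness, here is how your check closes.

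It suffices to treat $\Pi'$ obtained from $\Pi$ by splitting a single block into two pieces: any finer refinement $\Pi''$ refines such a $\Pi'$, so $P_{\Pi''}\subset P_{\Pi'}$ and $P_{\Pi''}\supset\ISp_\lambda$ would force $P_{\Pi'}\supset\ISp_\lambda$. The required coloring (each block a color, no repeated color in a column) exists precisely when the sorted block sizes of $\Pi'$ are dominated by $\lambda$: the $k$ largest blocks can use at most $\min(k,h)$ boxes of a column of height $h$, and these minima sum to $\lambda_1+\cdots+\lambda_k$; conversely dominance gives a Gale--Ryser realization. Now check dominance. If the split block is $F_{m+1}$, of size $\lambda_{m+1}+1=a+b$ with $a\ge b\ge 1$, the partial sums of $\Pi'$ are at most $k\lambda_1$ for $k\le m$, then $m\lambda_1+a\le m\lambda_1+\lambda_{m+1}$, then $m\lambda_1+\lambda_{m+1}+1\le m\lambda_1+\lambda_{m+1}+\lambda_{m+2}$ (note $m<l-1$, so $\lambda_{m+2}$ exists), and afterwards only singletons are added while each $\lambda_k\ge 1$. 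If the split block has size $\lambda_1=a+b$, the largest $m+1$ parts are $\lambda_1^{m-1}$ together with the two largest of $\{\lambda_{m+1}+1,a,b\}$, whose sum is the total $\lambda_1+\lambda_{m+1}+1$ minus the smallest of the three, hence at most $\lambda_1+\lambda_{m+1}$; the largest $m+2$ parts sum to $m\lambda_1+\lambda_{m+1}+1\le\lambda_1+\cdots+\lambda_{m+2}$, and again only singletons follow. So dominance holds, the coloring exists, and Lemma~\ref{0 point} applied to the generic point of $V(P_{\Pi'})$ gives $P_{\Pi'}\not\supset\ISp_\lambda$; hence $P$ is minimal and your argument is complete, in agreement with the paper's.
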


\begin{proof}
If $\lambda_{l-1}=\lambda_1$, then the assertion immediately follows from Proposition~\ref{minimal primes}. If $\lambda_2 =1$, then $\ISp_\lambda$ is Vandermonde type, 
and $R/\ISp_\lambda$ is Cohen--Macaulay as shown in \cite{WY}. 
So $\sqrt{\ISp_\lambda}$ (actually, $\ISp_\lambda$ itself) is pure dimensional. 

Conversely, if $\lambda_{l-1} <\lambda_1$, then the prime ideal $P$ introduced in the proof of Proposition~\ref{minimal primes} is a minimal prime of $\ISp_\lambda$. 
In fact, for a  prime ideal $P' \subsetneq P$, some point $\ba \in V(P')$ loses an equation $a_i =a_j$ which any point in $V(P)$ has. Then it is easy to see that $\ba$ does not satisfy the condition $(*)$ of Lemma~\ref{0 point}, and hence $\ba \not \in V(\ISp_\lambda)$. It means that $P' \not \supset \ISp_\lambda$.  

On the other hand, we have $\height(P)=m(\lambda_1-1)+\lambda_{m+1}$. 
It is easy to see that $\lambda_2 > 1$ implies $\height(P) > \lambda_1 \, (= \height(\ISp_\lambda))$, and $\ISp_\lambda$ is not pure. 
\end{proof}

By motivation from the study of rational Cherednik algebras,  
Etingof, Gorsky  and Losev \cite{EGL} proved the following result. 

\begin{thm}[{\cite[Proposition~3.11]{EGL}, see also \cite[Theorem~2.1]{BCES}}]\label{EGL} 
Suppose that $\chara(K)=0$. Then $R/I_{n,k}$ is Cohen--Macaulay if and only if $k=2$ or  $2k >n$. 
\end{thm}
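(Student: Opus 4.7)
\emph{Plan.} The ideal $I_{n,k}$ is unmixed of height $k-1$, with minimal primes exactly the $P_F$ for $|F|=k$. Two boundary cases are immediate: for $k=2$, pairwise coprimality of the $x_i-x_j$ gives $I_{n,2}=(\prod_{i<j}(x_i-x_j))$, a principal ideal, so $R/I_{n,2}$ is a hypersurface and Cohen--Macaulay; for $k=n$, $I_{n,n}=P_{[n]}$ is a linear complete intersection, again Cohen--Macaulay. So the real content lies in the range $3\le k\le n-1$, where the asserted dichotomy is $2k>n$ (CM) versus $2k\le n$ (not CM).

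For the CM direction I would induct on $n$, with base case $n=k$ covered above. Because no $P_F$ contains a variable, $x_n$ is a non-zero-divisor on $R/I_{n,k}$. A direct computation of images in $K[x_1,\ldots,x_{n-1}]$ shows
$$\overline{I_{n,k}}\;=\;I_{n-1,k}\;\cap\;J_{n-1,k-1},\qquad J_{n-1,k-1}\;:=\;\bigcap_{\substack{F'\subset[n-1]\\ |F'|=k-1}}(x_i:i\in F'),$$
where the first intersection factor comes from the $P_F$ with $n\notin F$ and the second from those with $n\in F$. Now $I_{n-1,k}$ is CM by the inductive hypothesis (since $2k>n$ implies $2k>n-1$), and $J_{n-1,k-1}$ is the Stanley--Reisner ideal of the $(n-k-1)$-skeleton of the $(n-2)$-simplex, which is CM by Reisner's criterion. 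A Mayer--Vietoris analysis then reduces the task to showing $R/(I_{n-1,k}+J_{n-1,k-1})$ is CM of the expected dimension; under the hypothesis $2k>n$, this sum collapses to $J_{n-1,k}$ (again a CM skeleton ideal), and unwinding the exact sequence yields CM of $R/I_{n,k}$.

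For the non-CM direction, the hypothesis $2k\le n$ lets us pick disjoint $k$-subsets $F_1,F_2\subset[n]$. Split $I_{n,k}=J_1\cap J_2$ with $J_2=\bigcap_{F\cap F_1=\emptyset}P_F$ and $J_1=\bigcap_{F\cap F_1\ne\emptyset}P_F$. The subscheme $V(P_{F_1})\cap V(P_{F_2})$ has codimension $2(k-1)$, which strictly exceeds $(k-1)+1=k$ once $k\ge 3$, so the term $R/(J_1+J_2)$ in the Mayer--Vietoris sequence
$$0\;\to\;R/I_{n,k}\;\to\;R/J_1\oplus R/J_2\;\to\;R/(J_1+J_2)\;\to\;0$$
has strictly smaller dimension than expected; passing to the long exact sequence of local cohomology produces a nonzero $H^i_{\m}(R/I_{n,k})$ with $i<n-k+1=\dim R/I_{n,k}$, contradicting CM.

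The main obstacle is the CM direction: showing that the reduction mod $x_n$ genuinely satisfies $I_{n-1,k}+J_{n-1,k-1}=J_{n-1,k}$ as ideals and not merely up to radical is delicate, and this is exactly where the characteristic-zero hypothesis becomes indispensable --- the paper's own Theorem~\ref{3-ji shiki} exhibits a characteristic-$2$ counterexample, confirming that no such elementary inductive argument can work in full generality. This is the reason \cite{EGL} sidesteps the issue by importing CM-ness from the regularity of the rational Cherednik algebra $H_{1/k,0}(S_n)$ acting on a natural module isomorphic to $R/I_{n,k}$.
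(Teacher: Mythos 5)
Your proposal has genuine gaps in both directions. For the Cohen--Macaulay direction ($2k>n$) the specific reduction you propose is not merely ``delicate'' --- it is false. Take $n=5$, $k=3$, so $S=K[x_1,\ldots,x_4]$: here $I_{4,3}=\ISp_{(2,2)}$ is generated in degree $2$ by the Specht polynomials $(x_1-x_2)(x_3-x_4)$, $(x_1-x_3)(x_2-x_4)$, $(x_1-x_4)(x_2-x_3)$, which span a $2$-dimensional space, while your $J_{4,2}$ is generated in degree $3$; so the degree-$2$ part of $I_{4,3}+J_{4,2}$ is $2$-dimensional, whereas $J_{4,3}$ is generated by the six monomials $x_ix_j$ and has a $6$-dimensional degree-$2$ part. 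Thus $I_{n-1,k}+J_{n-1,k-1}=J_{n-1,k}$ fails as an equality of ideals even in characteristic $0$; it holds only up to radical. What your Mayer--Vietoris sequence actually needs is that $S/(I_{n-1,k}+J_{n-1,k-1})$ itself be Cohen--Macaulay of dimension $n-k-1$, and that is a statement of essentially the same depth as the theorem and is genuinely characteristic-dependent: Theorem~\ref{3-ji shiki} analyses exactly such a sum ($\ISp_{(n-3,2)}+I_{\<3\>}$, the case $k=n-2$) and finds it fails to be Cohen--Macaulay in characteristic $2$. Since you end by deferring this direction to the Cherednik-algebra argument of \cite{EGL}, the ``if'' part is not proved; note that the paper does not prove it either --- Theorem~\ref{EGL} is imported as a black box, and the paper's contribution is the radicalness of $\ISp_\lambda$ needed to apply it.

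The non-CM direction contains a concrete miscalculation. With $J_1=\bigcap_{F\cap F_1\ne\emptyset}P_F$ and $J_2=\bigcap_{F\cap F_1=\emptyset}P_F$, the set $V(J_1+J_2)$ is the union of all $V(P_F+P_{F'})$ with $F$ meeting $F_1$ and $F'$ disjoint from $F_1$; among these are overlapping pairs with $\#(F\cup F')=k+1$ (for instance $F'=F_2$ and $F=(F_2\setminus\{b\})\cup\{a\}$ with $a\in F_1$, $b \in F_2$), giving components of codimension exactly $k$. Hence $\dim R/(J_1+J_2)=n-k=\dim R/I_{n,k}-1$, not strictly smaller than expected, and the long exact sequence of local cohomology then yields no nonvanishing $H^i_{\m}(R/I_{n,k})$ below the dimension without further input on the depths of $R/J_1$ and $R/J_2$, which you do not control. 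The repair is to localize at $P=P_{F_1}+P_{F_2}$: there the only surviving minimal primes are $(P_{F_1})_P$ and $(P_{F_2})_P$, the Mayer--Vietoris sequence forces $\depth (R/I_{n,k})_P=1$ while $\dim (R/I_{n,k})_P=k-1\ge 2$, so $(S_2)$ fails. This is precisely Proposition~\ref{nonCM} of the paper, which proves a stronger, characteristic-free statement than the ``only if'' half you are aiming at, so you should replace your global splitting by that local argument.
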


The only if part of the above theorem can be slightly improved as follows.

\begin{prop}\label{nonCM}
If $k \ge 3$ and $2k \le n$, then $R/I_{n,k}$ does not satisfy Serre's condition $(S_2)$. 
\end{prop}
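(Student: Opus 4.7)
The plan is to localize $R/I_{n,k}$ at a carefully chosen prime $\fq \supset I_{n,k}$ and show that the resulting local ring $A$ has dimension at least $2$ but disconnected punctured spectrum; by Hartshorne's connectedness theorem this forces $\depth A \le 1$, in violation of $(S_2)$.

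Concretely, since $2k \le n$, I choose two disjoint $k$-subsets $F_1 := \{1,\dots,k\}$ and $F_2 := \{k+1,\dots,2k\}$ of $[n]$, and let $\Pi$ be the partition of $[n]$ whose non-singleton blocks are exactly $F_1$ and $F_2$ (the remaining elements $2k+1,\dots,n$ each forming a singleton block). Set $\fq := P_\Pi$, so that $\fq = P_{F_1}+P_{F_2}$ and $\height \fq = 2(k-1)$ in $R$. The key preliminary step is to verify that the only minimal primes of $I_{n,k}$ contained in $\fq$ are $P_{F_1}$ and $P_{F_2}$: since $R/P_\Pi$ is obtained from $R$ by identifying variables within each block, $P_F \subset P_\Pi$ holds exactly when $F$ is contained in a single block of $\Pi$; with $\#F = k$ and the only blocks of $\Pi$ of size $\ge k$ being $F_1, F_2$, this forces $F \in \{F_1, F_2\}$.

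Consequently, in the localization $A := (R/I_{n,k})_\fq$ there are exactly two minimal primes $\fp_1, \fp_2$ (the images of $P_{F_1}, P_{F_2}$), and the relation $P_{F_1}+P_{F_2}=P_\Pi=\fq$ yields $\fp_1+\fp_2 = \fm_A$. Each quotient $A/\fp_i$ has dimension $\dim R/P_{F_i} - \dim R/P_\Pi = (n-k+1) - (n-2k+2) = k-1 \ge 2$. In $\Spec A$, the two closed sets $V(\fp_1)$ and $V(\fp_2)$ meet only at $\fm_A$, so $\Spec A \setminus \{\fm_A\}$ is the disjoint union of the two nonempty opens $V(\fp_i) \setminus \{\fm_A\}$ and hence is disconnected.

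Since $\dim A = k-1 \ge 2$, Hartshorne's connectedness theorem (any Noetherian local ring of depth $\ge 2$ has connected punctured spectrum) then yields $\depth A \le 1 < \min(2,\dim A)$, which is precisely the failure of $(S_2)$ at $\fq$. The one subtle point is the choice of $\Pi$: using singletons on $[n]\setminus(F_1\cup F_2)$ is essential so that no other $k$-subset fits inside a single block of $\Pi$, and the hypotheses $k\ge 3$ and $2k\le n$ are exactly what is needed to make $\dim A\ge 2$ and to ensure that two disjoint $k$-subsets exist.
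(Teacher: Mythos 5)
Your proof is correct, and it rests on the same core construction as the paper's: localizing at the prime $P:=P_{F_1}+P_{F_2}$ built from two disjoint $k$-subsets, observing that exactly the two minimal primes $P_{F_1},P_{F_2}$ of $I_{n,k}$ survive the localization, and computing that the local ring has dimension $k-1\ge 2$. The only divergence is the concluding step: the paper feeds the local decomposition into the Mayer--Vietoris sequence $0 \to (R/I_{n,k})_P \to (R/P_{F_1})_P\oplus (R/P_{F_2})_P \to (R/P)_P \to 0$ and reads off $\depth (R/I_{n,k})_P = 1$ exactly (the middle term has depth $k-1\ge 2$, the right term has depth $0$), whereas you invoke Hartshorne's connectedness theorem to get $\depth \le 1$ from the disconnected punctured spectrum. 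Both are standard and valid; the Mayer--Vietoris route yields slightly more (the precise depth), while your connectedness argument is more qualitative and avoids the exact sequence --- either suffices for the failure of $(S_2)$, since $\min(2,\dim)=2$ at this prime. Your explicit check that $P_F\subset P_\Pi$ forces $F$ to lie in a single block of $\Pi$ (so that the singleton blocks outside $F_1\cup F_2$ rule out any further minimal primes) is a welcome expansion of the step the paper dismisses as ``easy to see.''
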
 

\begin{proof}
Set $F=\{ 1,2, \ldots, k\}$ and $F'=\{k+1, k+2, \ldots, 2k\}$. 
Then $P_F=(x_1-x_2, \ldots, x_1-x_k)$ and $P_{F'} =(x_{k+1}-x_{k+2}, \ldots, x_{k+1}-x_{2k})$  are minimal primes of $I_{n,k}$.  Consider the prime ideal $P:= P_F +P_{F'}$. 
It is easy to see that any minimal prime of $I_{n,k}$ other than  $P_F$ and $P_{F'}$ is  not contained in $P$, that is, $\Min_{R_P}( (R/I_{n,k})_P) =\{   (P_F)_P, (P_{F'})_P \}$. 
By the Mayer–Vietoris sequence 
$$0 \too (R/I_{n,k})_P \too (R/P_F)_P \oplus (R/P_{F'})_P \too (R/P)_P \too 0$$
 we see that  $\depth (R/I_{n,k})_P=1$. On the other hand, 
$$\dim  (R/I_{n,k})_P = 
\height (P) -\height (I_{n,k}) = 2(k-1)-(k-1)=k-1 \ge 2.$$ Hence $R/I_{n,k}$ does not satisfy Serre's condition $(S_2)$. 
\end{proof}

\begin{prop}\label{CM ISP} 
Let $(\lambda_1, \ldots, \lambda_l)$ be a partition of $n$. 
If $R/\ISp_\lambda$ is Cohen--Macaulay, then one of the following conditions is satisfied,. 
\begin{itemize}
\item[(1)] $\lambda_2=1$, 
\item[(2)] $l=2$,
\item[(3)] $l=3$, $\lambda_1=\lambda_2$ and  $\lambda_3=1$. 
\end{itemize}
\end{prop}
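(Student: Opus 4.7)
The plan is to deduce the restrictions on $\lambda$ in two stages: first an equidimensionality reduction, and then a local depth argument that copies the mechanism of Proposition~\ref{nonCM}. Since Cohen--Macaulayness of $R/\ISp_\lambda$ forces equidimensionality, Corollary~\ref{ISp  non pure} immediately reduces us to the cases $\lambda_2=1$ (which is condition (1)) or $\lambda_{l-1}=\lambda_1$. Assuming the latter, the partition must take the form $\lambda=(a,a,\ldots,a,b)$ with $l-1$ copies of $a:=\lambda_1$ and $b\le a$. The shape already satisfies (2) when $l=2$ and (3) when $l=3$ and $b=1$, so the remaining cases to exclude are $l\ge 4$, and ($l=3$, $b\ge 2$). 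In both we have $a\ge 2$ (otherwise $\lambda_2=1$), and a direct arithmetic check on $n=(l-1)a+b$ shows $n\ge 2(a+1)$.

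I then follow the strategy of Proposition~\ref{nonCM}. Take disjoint subsets $F=\{1,\ldots,a+1\}$ and $F'=\{a+2,\ldots,2a+2\}$ of $[n]$; by Proposition~\ref{lambda_1+1} the primes $P_F$ and $P_{F'}$ are minimal over $\ISp_\lambda$, and $P:=P_F+P_{F'}$ is a prime of height $2a$. Proposition~\ref{minimal primes} gives $\sqrt{\ISp_\lambda}=I_{n,a+1}$, so every minimal prime of $\ISp_\lambda$ has the form $P_G$ with $\#G=a+1$; a short case analysis shows that only $P_F$ and $P_{F'}$ lie inside $P$, because any $G$ meeting $[n]\setminus(F\cup F')$ or intersecting both $F$ and $F'$ contributes a difference $x_i-x_j\notin P$.

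Since $R/\ISp_\lambda$ is Cohen--Macaulay it has no embedded primes, so locally $(\ISp_\lambda)_P=Q_1\cap Q_2$ with $Q_1,Q_2$ primary to $(P_F)_P,(P_{F'})_P$ respectively. The Mayer--Vietoris sequence
$$
0\too R_P/(\ISp_\lambda)_P\too R_P/Q_1\oplus R_P/Q_2\too R_P/(Q_1+Q_2)\too 0
$$
has right-hand term of depth $0$ (as $Q_1+Q_2\supset P_F+P_{F'}=P$ is $\mathfrak{m}_{R_P}$-primary), while each $R_P/Q_i$ has depth at least $1$ because its only associated prime $(P_F)_P$ or $(P_{F'})_P$ sits strictly below $\mathfrak{m}_{R_P}$. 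The long exact sequence of local cohomology then yields $H^1_{\mathfrak{m}_{R_P}}(R_P/(\ISp_\lambda)_P)\ne 0$, so $\depth(R/\ISp_\lambda)_P\le 1$; but $\dim(R/\ISp_\lambda)_P=\height P-\height\ISp_\lambda=2a-a=a\ge 2$, contradicting the Cohen--Macaulay hypothesis.

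The principal obstacle is the bookkeeping in the second paragraph---pinning down exactly which $P_G$ sit inside $P$---together with the mild observation that $R_P/Q_i$ has positive depth, which is what makes the Mayer--Vietoris sequence force $\depth (R/\ISp_\lambda)_P\le 1$ rather than only bounding it by the dimension. Both are short verifications once Proposition~\ref{minimal primes} is in hand, after which the remainder of the argument is a direct parallel of Proposition~\ref{nonCM}.
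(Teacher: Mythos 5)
Your argument is correct and is essentially the paper's own proof: reduce via Corollary~\ref{ISp  non pure} to the case $\lambda_{l-1}=\lambda_1$, observe that the shapes still to be excluded force $\lambda_1+1\ge 3$ and $n\ge 2(\lambda_1+1)$, and then run the Mayer--Vietoris depth argument of Proposition~\ref{nonCM} locally at $P=P_F+P_{F'}$ for disjoint $F,F'$ of size $\lambda_1+1$, using Proposition~\ref{minimal primes} to see that $P_F$ and $P_{F'}$ are the only minimal primes inside $P$. One small correction: $Q_1+Q_2$ need not contain $P_F+P_{F'}$ (primary components sit \emph{inside} their radicals), but $\sqrt{Q_1+Q_2}\supseteq\sqrt{Q_1}+\sqrt{Q_2}=PR_P$, so $R_P/(Q_1+Q_2)$ is still Artinian of depth $0$ and the rest of your argument goes through unchanged.
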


\begin{proof}
If neither $\lambda_2 =1$ nor $\lambda_{l-1} = \lambda_1$ then $R/\ISp_\lambda$ is not Cohen--Macaulay by Corollary~\ref{ISp  non pure}. 
So we may assume that $\lambda_{l-1} = \lambda_1$ 
(note that this condition is clearly satisfied if $l=2$). 
In this case, we have $\sqrt{\ISp_\lambda} = I_{n, \lambda_1 +1}$ by Proposition~\ref{minimal primes}. 
If none of the conditions (1)--(3) is satisfied, then $\lambda_1 +1 \ge 3$ and 
$n \ge 2(\lambda_1 +1)$, and we can show that  $R/\ISp_\lambda$ does not satisfy $(S_2)$ by an argument similar to the proof of Proposition~\ref{nonCM}.  
In fact, in this situation, there is a prime ideal $P$ of $A:=(R/\ISp_\lambda)_P$ such that $\height P\ge 2$ and the ideal $(0) \subset A$ has a primary decomposition 
such that $(0)= \fq \cap \fq'$ and $\sqrt{\fq + \fq' }$ is the maximal ideal of $A$. It means that $\depth A =1$, while $\dim A \ge 2$. 
\end{proof}

In  the case (1) of Theorem~\ref {CM ISP}, $R/\ISp_\lambda$ is Cohen--Macaulay for arbitrary $K$ by \cite{WY}. 
So it remains to consider the cases (2) and (3). 
If $\ISp_\lambda$ is radical, we can use Theorem~\ref{EGL} in the case $\chara(K)=0$.
So the next problem is very natural.

\begin{conj}\label{radical?}
The Specht ideal  $\ISp_\lambda$ is always a radical ideal. 
\end{conj}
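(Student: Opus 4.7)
\textbf{Proof plan for Conjecture~\ref{radical?}.}  The approach I would take is to guess the primary decomposition of $\ISp_\lambda$ directly and then verify it. The first step is to classify the minimal primes combinatorially. By Lemma~\ref{partition}, every minimal prime is a partition ideal $P_\Pi$, so I would call a partition $\Pi = \{F_1, \ldots, F_m\}$ of $[n]$ \emph{$\lambda$-blocking} if in every Young tableau $T$ of shape $\lambda$ some column has two cells whose labels lie in a common block of $\Pi$, and \emph{$\lambda$-critical} if it is $\lambda$-blocking but no proper refinement is. Combining Lemma~\ref{0 point} with the genericity arguments already used in Proposition~\ref{lambda_1+1} and Corollary~\ref{ISp  non pure}, one obtains that $P_\Pi \supset \ISp_\lambda$ iff $\Pi$ is $\lambda$-blocking, and hence that the minimal primes of $\ISp_\lambda$ are exactly the $P_\Pi$ with $\Pi$ $\lambda$-critical. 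This gives the candidate
$$J_\lambda := \bigcap_{\Pi\ \lambda\text{-critical}} P_\Pi,$$
and the inclusion $\ISp_\lambda \subset J_\lambda$ is immediate from Lemma~\ref{0 point}.

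The main work is the reverse inclusion $J_\lambda \subset \ISp_\lambda$. Both sides are homogeneous and $S_n$-stable, so the natural strategy is to match their $S_n$-isotypic components degree by degree. In characteristic $0$, each quotient $R/P_\Pi \cong K[X_1, \ldots, X_m]$ is, as an $S_n$-representation, a permutation module induced from a Young subgroup stabilizing $\Pi$, so Frobenius reciprocity yields an explicit Schur-function description of the $S_n$-character of $R/J_\lambda$ in each degree. On the other hand, $\ISp_\lambda$ is by construction the smallest $S_n$-stable ideal containing the Specht module $U_\lambda$, and its $k$-th graded piece is $R_{k - d_\lambda}\cdot U_\lambda$ where $d_\lambda = \deg f_T$; the isotypic decomposition of this piece is governed by a Pieri-style rule in characteristic $0$. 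Matching the two decompositions should prove the reverse inclusion. The already-established cases $(a,1,\ldots,1)$ via \cite{WY}, $(n-k,k)$ via Theorem~\ref{ISp_{(n-k,k)} is radical}, and $(a,a,1)$ via Theorem~\ref{ISp_{(a,a,1)} is radical} each proceed by a direct combinatorial straightening argument, and I would try to patch them together using induction on $l$ (removing the last row) or on $|\lambda|$ (removing a corner cell).

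The principal obstacle is precisely this reverse inclusion in the non-pure case. When $\lambda_{l-1} < \lambda_1$ the ring $R/\ISp_\lambda$ is non-pure by Corollary~\ref{ISp  non pure}, so there is no Cohen--Macaulay unmixedness theorem to rule out embedded primes; all of the work is encoded in the straightening step. Moreover, the classification of $\lambda$-critical partitions is already combinatorially delicate --- it amounts to characterizing up to $S_n$ which coarsenings of $[n]$ force a column repeat in every shape-$\lambda$ tableau --- so the candidate $J_\lambda$ is not transparently described for general $\lambda$. A secondary obstacle is positive characteristic: the Specht module $U_\lambda$ need not be irreducible, so one cannot simply compare multiplicities in a semisimple decomposition, and one would most likely need a flat-deformation or Gr\"obner-basis argument to descend the identity $\ISp_\lambda = J_\lambda$ from characteristic $0$ to all fields.
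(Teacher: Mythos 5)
This statement is a conjecture, and the paper itself does not prove it in general: it establishes radicality only for $\lambda_2=1$ (via \cite{WY}), for $\lambda=(n-d,d)$ (Theorem~\ref{ISp_{(n-k,k)} is radical}), and for $\lambda=(a,a,1)$ (Theorem~\ref{ISp_{(a,a,1)} is radical}). Your proposal is likewise not a proof but a plan, and its load-bearing step is missing. The easy half is fine: identifying the minimal primes with the partition ideals $P_\Pi$ for ``$\lambda$-critical'' $\Pi$ does follow from Lemma~\ref{0 point} and the genericity arguments of Proposition~\ref{lambda_1+1} and Corollary~\ref{ISp non pure}, and $\ISp_\lambda\subset J_\lambda$ is immediate. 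But the reverse inclusion $J_\lambda\subset\ISp_\lambda$ is exactly the content of the conjecture, and the character-matching strategy you sketch does not yet engage it. Two concrete problems: (i) $R/J_\lambda$ is only a subring of $\bigoplus_\Pi R/P_\Pi$, not a direct sum of induced permutation modules, so Frobenius reciprocity alone gives no character formula; you would need inclusion--exclusion over the full intersection lattice of the critical $P_\Pi$'s, which is not described for general $\lambda$. (ii) The graded piece $R_{k-d_\lambda}\cdot U_\lambda$ is the image of a highly non-injective multiplication map $R_{k-d_\lambda}\otimes U_\lambda\to R_k$, so a Pieri-type rule computes the tensor product, not the image; determining the kernel is essentially equivalent to understanding the ideal, which is the open problem. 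Finally, the positive-characteristic descent you defer to a ``flat deformation or Gr\"obner basis argument'' is not automatic---radicality does not in general specialize from characteristic $0$---and no such argument is indicated.

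For comparison, in the two cases the paper does settle, the method is quite different and characteristic-free: one shows $x_n$ is a nonzerodivisor on $R/\ISp_\lambda$, computes the image $\varphi(\ISp_\lambda)=\frJ$ in $S=K[x_1,\dots,x_{n-1}]$ explicitly (Lemmas~\ref{varphi}, \ref{varphi2}), identifies $\sqrt{\frJ}$ as $\sqrt{\ISp_\mu}\cap I_{\langle\cdot\rangle}$ (Lemma~\ref{radical of frJ}), proves the containment $\frJ\supset\ISp_\mu\cap I_{\langle\cdot\rangle}$ by an explicit straightening manipulation of Specht polynomials (Lemmas~\ref{support}--\ref{quasi h-standard} and the algorithm in the proof of Theorem~\ref{ISp_{(n-k,k)} is radical}), and then lifts reducedness back along the nonzerodivisor via Lemma~\ref{radical reduction}, inducting on $d$. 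If you want to pursue your representation-theoretic route, the honest target is an independent computation of the graded character (or at least Hilbert function) of $R/\ISp_\lambda$; as written, that computation is assumed rather than supplied, so the proposal has a genuine gap precisely where the conjecture is hard.
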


In the present paper, we will prove this conjecture in two important cases. 
We treat the case $\lambda=(n-d,d)$ in this section, and the case $\lambda=(a,a,1)$ in the next section. Clearly, the former (resp. latter) case corresponds to the condition (2) (resp. 3) of Proposition~\ref{CM ISP}.  
The following observation is useful in our proof. 

\medskip

If we set $y_i := x_i -x_n$ for $1 \le i \le n-1$, then we have $R=K[x_1, \ldots, x_n]=K[y_1 \ldots, y_{n-1}, x_n]$.  
Since $x_i-x_j =y_i-y_j$ for $1 \le i,j <n$, the Specht polynomial $f_T$ is a polynomial of $y_1, \ldots, y_{n-1}$ for all $T$.  Hence $x_n$ is an $R/\ISp_\lambda$-regular element.

Let $\lambda =(\lambda_1, \ldots, \lambda_l)$ be a partition of $n$ with $\lambda_{l-1}=\lambda_1$, and $\mu$ the partition of $n-1$ given by
$$
\mu =
\begin{cases}
(\lambda_1, \ldots, \lambda_{l-1}, \lambda_l-1) & \text{(if $\lambda_l \ge 2$)}\\
(\lambda_1, \ldots, \lambda_{l-1}) & \text{(if $\lambda_l = 1$).}
\end{cases}
$$

\begin{lem}\label{radical of frJ}
Let $\lambda$ and $\mu$ be as above, $\ISp_\mu \subset S:=K[x_1, \ldots, x_{n-1}]$ the Specht ideal of $\mu$, and $I_{\< m \>} \subset S$ the ideal generated by all degree $m$ squarefree monomials. And let $\varphi: R \to S \, (\cong R/(x_n))$ be the natural surjection. 
Then we have 
$$\sqrt{\varphi(\ISp_\lambda)} = \varphi(I_{n, \lambda_1+1}) = \sqrt{\ISp_\mu} \cap I_{\<n-\lambda_1\>}.$$
\end{lem}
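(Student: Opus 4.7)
The plan is to exploit the change of coordinates $y_i := x_i - x_n$ (for $1 \le i \le n-1$) already introduced in the excerpt. The key observation is that, in these coordinates, both $I_{n,\lambda_1+1}$ and $\ISp_\lambda$ turn out to be extended from ideals in the subring $K[y_1,\ldots,y_{n-1}] \subset R$; since $\varphi$ restricts on this subring to the tautological isomorphism $y_i \mapsto x_i$ onto $S$, the whole lemma reduces to an intersection calculation inside that subring.

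First I would verify that every $P_F$ with $\#F = \lambda_1+1$ is extended from $K[y_1,\ldots,y_{n-1}]$. The case $n \notin F$ is immediate since $x_i - x_j = y_i - y_j$. When $F = F' \cup \{n\}$ with $\#F' = \lambda_1$, the generators $x_i - x_n = y_i$ (for $i \in F'$) absorb the differences $x_i - x_j = y_i - y_j \in (y_i, y_j)$ (for $i,j \in F'$), collapsing $P_F$ to $(y_i : i \in F')\cdot R$. Writing $P_F^{(y)}$ for the corresponding ideal of $K[y_1,\ldots,y_{n-1}]$, flatness of $K[y_1,\ldots,y_{n-1}] \hookrightarrow R = K[y_1,\ldots,y_{n-1}][x_n]$ gives $I_{n,\lambda_1+1} = \bigl(\bigcap_F P_F^{(y)}\bigr)R$.

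It then follows that $\varphi(I_{n,\lambda_1+1})$ corresponds, via $y_i \leftrightarrow x_i$, to $\bigcap_F P_F^{(y)}$. Splitting the intersection according to whether $n \in F$, the factor with $n \notin F$ is $\bigcap_{F \subset [n-1],\, \#F = \lambda_1+1} P_F = I_{n-1,\lambda_1+1}$, and the factor with $n \in F$ is $\bigcap_{V \subset [n-1],\, \#V = \lambda_1}(x_i : i \in V)$, which is the standard primary decomposition of $I_{\<n-\lambda_1\>}$. Applying Proposition~\ref{minimal primes} to $\mu$---after checking that $\mu_{l'-1} = \mu_1 = \lambda_1$ in both cases of the definition of $\mu$ (which follows from the chain $\lambda_1 = \lambda_2 = \cdots = \lambda_{l-1}$), with the degenerate subcase $\mu = (\lambda_1)$ handled separately by noting that $\ISp_{(\lambda_1)}$ is the unit ideal---gives $\sqrt{\ISp_\mu} = I_{n-1,\lambda_1+1}$, which proves the middle equality.

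For the first equality, the same coordinate argument applies directly to $\ISp_\lambda$ itself, since the excerpt already observes that every Specht polynomial $f_T$ lies in $K[y_1,\ldots,y_{n-1}]$. Thus $\ISp_\lambda$ is extended from $K[y_1,\ldots,y_{n-1}]$, and because a polynomial ring over a reduced ring is reduced, radicals commute with this extension. Proposition~\ref{minimal primes} then gives $\sqrt{\ISp_\lambda} \cap K[y_1,\ldots,y_{n-1}] = I_{n,\lambda_1+1} \cap K[y_1,\ldots,y_{n-1}]$, which translates under $\varphi$ to $\sqrt{\varphi(\ISp_\lambda)} = \varphi(I_{n,\lambda_1+1})$. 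The only substantive obstacle is careful bookkeeping: justifying the collapse $P_F = (y_i : i \in F')R$ in the $n \in F$ case, and handling the degenerate $\mu = (\lambda_1)$ case in the appeal to Proposition~\ref{minimal primes}. Once these are in place, the lemma follows essentially formally from flatness.
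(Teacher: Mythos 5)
Your proposal is correct, and its skeleton coincides with the paper's: both apply Proposition~\ref{minimal primes} to $\lambda$ and to $\mu$ (so $\sqrt{\ISp_\lambda}=I_{n,\lambda_1+1}$ and $\sqrt{\ISp_\mu}=\bigcap_{F\subset[n-1],\,\#F=\lambda_1+1}\bP_F$), and both split $\bigcap_F P_F$ according to whether $n\in F$, the second family of images giving exactly $I_{\<n-\lambda_1\>}$. Where you genuinely diverge is in the engine that lets $\varphi$ commute with intersections and radicals. The paper does this homologically: for each pair $F,F'$ it writes the Mayer--Vietoris sequence $0\to R/(P_F\cap P_{F'})\to R/P_F\oplus R/P_{F'}\to R/(P_F+P_{F'})\to 0$, notes that $x_n$ is a non-zerodivisor on the last term because $P_F+P_{F'}$ is generated by polynomials in the $y_i=x_i-x_n$, tensors with $S=R/(x_n)$ using \cite[Proposition~1.1.4]{BH}, and iterates to get $\varphi(I_{n,\lambda_1+1})=\bigcap_F\varphi(P_F)$; the first equality is then obtained from $\sqrt{\ISp_\lambda}=I_{n,\lambda_1+1}$ together with the radicality of this finite intersection of primes. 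You instead observe that every $P_F$ (after collapsing $P_F=(y_i: i\in F\setminus\{n\})R$ when $n\in F$) and $\ISp_\lambda$ itself are extended from the subring $K[y_1,\ldots,y_{n-1}]$, over which $R$ is free, so extension commutes with finite intersections by flatness and with radicals because the nilradical of $B[x_n]$ is the extension of the nilradical of $B$; contracting back and applying the isomorphism $y_i\mapsto x_i$ induced by $\varphi$ yields both equalities at once. This buys a cleaner, more structural argument that avoids the iterated exact-sequence/Tor step (and implicitly supplies the fact, glossed over in the paper's ``repeating this argument,'' that the partial intersections are again generated by $y$-polynomials); you also explicitly check the hypothesis $\mu_{l'-1}=\mu_1$ and the degenerate case $\mu=(\lambda_1)$, which the paper leaves tacit. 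The paper's route, in turn, needs nothing beyond the regular-element lemma it already has on hand and stays entirely inside the $x$-coordinates.
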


\begin{proof}
By  Proposition~\ref{minimal primes}, we have $\sqrt{\ISp_\lambda} = I_{n, \lambda_1 +1}$. 
Since $\mu$ satisfies the condition of Proposition~\ref{minimal primes} again, if we set  $\bP_F =(x_i -x_j \mid i, j \in F)\subset S$ for  $F \subset [n-1]$, then 
$$\sqrt{\ISp_\mu}  = \bigcap_{\substack{F \subset [n-1], \\ \#F =\lambda_1+1}} \bP_F.$$ 
If  $n \not \in F$ (i.e., $F \subset [n-1]$), then we have $\varphi(P_F) = \bP_F$. If $n \in F$, then $\varphi(P_F)= (x_i \mid i < n, i \in F)$.  

For $F, F' \subset [n]$, consider 
$$0 \too R/(P_F \cap P_{F'}) \too R/P_F \oplus  R/P_{F'} \too R/(P_F + P_{F'}) \too 0.$$
Since $P_F + P_{F'}$ is generated by polynomials of $y_i \, (= x_i -x_n)$ for 
$1 \le i \le n-1$, $x_n$ is a non-zero divisor over  $R/(P_F + P_{F'})$. So applying $-\otimes_R S$, we have the exact sequence 
$$0 \too S/\varphi(P_F \cap P_{F'}) \too S/\varphi(P_F) \oplus  S/\varphi(P_{F'}) \too S/\varphi(P_F + P_{F'}) \too 0$$
by \cite[Proposition~1.1.4]{BH}.  
It means that $\varphi(P_F \cap P_{F'}) = \varphi(P_F) \cap \varphi(P_{F'})$. Repeating this argument,  we have 
$$\varphi(I_{n,\lambda_1 +1}) =  \bigcap_{\substack{F \subset [n], \\ \#F =\lambda_1 +1}} \varphi(P_F),$$
and hence $\varphi(I_{n,\lambda_1 +1})$ is radical. Now we have  
\begin{eqnarray*}
\sqrt{\varphi(\ISp_\lambda}) = \sqrt{\varphi(I_{n,\lambda_1 +1})} 
&=& \varphi(I_{n,\lambda_1 +1}) \\
&=& \bigcap_{\substack{F \subset [n], \\ \#F =\lambda_1 +1}} \varphi(P_F)\\
&=&  \Biggl( \bigcap_{\substack{F \subset [n], \, n \not \in F \\ \#F =\lambda_1 +1}} \varphi(P_F) \Biggr)  \cap \Biggl( \bigcap_{\substack{F \subset [n], \, n  \in F \\ \#F =\lambda_1 +1}} \varphi(P_F) \Biggr) \\
&=&  \Biggl( \bigcap_{\substack{F \subset [n-1], \\ \#F =\lambda_1 +1}} \bP_F \Biggr)  \cap \Biggl( \bigcap_{\substack{F \subset [n-1], \\ \#F =\lambda_1}} (x_i \mid i \in F) \Biggr) \\
&=& \sqrt{\ISp_\mu} \cap I_{\<n-\lambda_1 \>}. 
\end{eqnarray*}
\end{proof}

\section{The radicalness of $\ISp_{(n-d,d)}$}
In this section, we will prove Conjecture~\ref{radical?} in the case $\lambda=(n-d,d)$.  

\begin{thm}\label{ISp_{(n-k,k)} is radical}
The Specht ideal $\ISp_{(n-d,d)}$ is radical. 
\end{thm}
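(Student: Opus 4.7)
The plan is to argue by induction on $d$. The base case $d=1$ is the Vandermonde case $\lambda=(n-1,1)$, for which $\ISp_\lambda$ is radical by \cite{WY}. So assume $d\ge 2$ and set $\mu=(n-d,d-1)$, a two-row partition of $n-1$; by the inductive hypothesis, $\ISp_\mu\subset S=K[x_1,\ldots,x_{n-1}]$ is already radical.

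Recall from the discussion preceding Lemma~\ref{radical of frJ} that $x_n$ is $R/\ISp_\lambda$-regular. A general principle in the graded setting is the following: if $J\subset R$ is a homogeneous ideal, $x_n$ is regular on $R/J$, and $\varphi(J)\subset S$ is radical, then $J$ is radical in $R$. Indeed, if $f$ is homogeneous with $f^k\in J$, radicality of $\varphi(J)$ yields $f=g+x_n h$ with $g\in J$ homogeneous of the same degree; hence $(x_n h)^k=(f-g)^k\in J$, and regularity of $x_n^k$ forces $h^k\in J$; since $\deg h<\deg f$, iterating terminates and produces $f\in J$. It therefore suffices to prove that $\varphi(\ISp_\lambda)$ is radical in $S$. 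Combining Lemma~\ref{radical of frJ} with the inductive hypothesis,
$$\sqrt{\varphi(\ISp_\lambda)}=\sqrt{\ISp_\mu}\cap I_{\<d\>}=\ISp_\mu\cap I_{\<d\>},$$
so the task reduces to the reverse containment $\varphi(\ISp_\lambda)\supset\ISp_\mu\cap I_{\<d\>}$.

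Explicit candidates inside $\varphi(\ISp_\lambda)$ come from examining $\varphi(f_T)$ according to the position of the entry $n$ in $T$. If $n$ sits in a column of height one, then $\varphi(f_T)=f_T$ is a Specht polynomial of shape $(n-d-1,d)$ on $[n-1]$; if $n$ sits in a column of height two, then $\varphi(f_T)=\pm x_c\,f_{T'}$, where $c\in[n-1]$ is the column-partner of $n$ and $T'$ is a tableau of shape $\mu$ in which $c$ appears as a row-$1$ extra. The main obstacle is the combinatorial step of expressing every element of $\ISp_\mu\cap I_{\<d\>}$ as an $S$-linear combination of these two families. The easy case is when one of the variables $x_{c_j}$ in a product $x_{c_1}\cdots x_{c_d}\cdot f_{T'}$ corresponds to a row-$1$ extra of $T'$: that variable can then be absorbed directly into a case-two generator, and the remaining $x_{c_i}$'s are supplied as polynomial coefficients. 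The genuinely subtle case is when every $c_j$ already appears in a height-$2$ column of $T'$; here one must bring to bear Garnir-type straightening relations among shape-$\mu$ Specht polynomials to rewrite the product as a sum of admissible expressions. Once this step is accomplished, $\varphi(\ISp_\lambda)$ coincides with its radical in $S$, and the regularity principle of the second paragraph lifts the conclusion to $\ISp_\lambda$.
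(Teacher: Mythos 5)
Your outer framework coincides with the paper's: induction on $d$, the observation that $x_n$ is $R/\ISp_\lambda$-regular, a graded ``radicality descends along a regular element'' principle (this is the paper's Lemma~\ref{radical reduction}; your degree-induction argument for homogeneous $f$ is correct, modulo the standard reduction of non-homogeneous nilpotents to homogeneous ones), and the use of Lemma~\ref{radical of frJ} plus the inductive hypothesis to reduce the theorem to the single containment $\varphi(\ISp_\lambda)\supseteq \ISp_\mu\cap I_{\<d\>}$. Your description of the generators of $\varphi(\ISp_\lambda)$ (the polynomials $x_c f_{T'}$ with $x_c\notin\supp(f_{T'})$, $T'$ of shape $\mu=(n-d,d-1)$) also agrees with Lemma~\ref{varphi}.

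However, the containment itself is the entire content of the theorem, and your proposal does not prove it; it only names it as ``the main obstacle.'' Two concrete gaps. First, you implicitly reduce to elements of the form $x_{c_1}\cdots x_{c_d}\,f_{T'}$, a degree-$d$ squarefree monomial times a \emph{single} Specht polynomial of shape $\mu$. But $\ISp_\mu\cap I_{\<d\>}$ is not (obviously) generated by such products: a general element is a combination $\sum_T g_T f_T$ that lies in $I_{\<d\>}$ as a whole while no individual summand does, and after the reductions of Lemma~\ref{reduction} and Corollary~\ref{reduction2} the problem one must actually solve is $x^\ba\bigl(\sum_{T\in X}c_Tf_T\bigr)\in I_{\<d\>}$ with $\deg x^\ba=k\le d-1$, where each single term $x^\ba f_T$ fails to lie in $I_{\<d\>}$. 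This reduction step is missing from your argument and is not a formality. Second, for the case you yourself flag as ``genuinely subtle'' you only assert that ``Garnir-type straightening relations'' should be brought to bear, and then proceed under the hypothesis ``once this step is accomplished.'' That step is precisely where the paper does its work: it needs the vanishing identity $\sum_T c_Th_T=0$ (Lemma~\ref{relation of h_T}), the linear independence of the standard Specht polynomials $f_{\widetilde{T}}$ of the smaller shape $(n-d,d-1-k)$, the permutation moves of Lemma~\ref{permutation}, the quasi-$h$-standard straightening of Lemma~\ref{quasi h-standard}, and an alternating algorithm (Operations 1 and 2) whose termination is guaranteed by the coordinatewise increase of $(j_1,\ldots,j_k)$. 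Since none of this is carried out or replaced by an alternative argument, the proposal establishes only the (comparatively routine) reduction already contained in Lemmas~\ref{radical of frJ}--\ref{varphi}, not the theorem.
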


We prove this theorem by induction on $d$  (since $R/\ISp_{(n-1,1)} \cong K[X]$, the assertion is clear if $d=1$). 

For a polynomial $f  \in S=K[x_1, \ldots, x_{n-1}]$, let 
$\supp(f) $ be the set of squarefree monomials in $S$ which divide some nonzero term of $f$. 
\begin{ex}
$\supp(x_1x_2^3-3(x_2x_3)^2)=\{1, x_1, x_2, x_3, x_1x_2, x_2x_3 \}$. 
\end{ex}

Let 
$$
T = 
\ytableausetup
{mathmode, boxsize=2em}
\begin{ytableau}
i_1 & i_2  & \cdots &i_d & i_{d+1} & \cdots & i_{n-d}  \\
j_1 & j_2 & \cdots & j_d
\end{ytableau}
$$
be a tableau of shape $\lambda$. The Specht polynomial $f_T$ does not care the order of the 1st to the $d$-th columns, and the order of the $(d+1)$-st to the $(n-d)$-th columns. Moreover, if we permute $i_k$ and $j_k$ for some $1 \le k \le d$ then its Specht polynomial becomes $-f_T$. On the set of tableaux of shape $\lambda$, we consider the equivalence relation modulo these permutations. Then $T \equiv T'$ if and only if $f_T =\pm f_{T'}$.  
Let $\Tab(\lambda)$ denote the set of the equivalence classes. However, we sometimes identify an equivalence class $[T] \in \Tab(\lambda)$ with its representative $T$.  For example,  we often write like $T \in \Tab(\lambda)$. When we consider $f_T$ of $[T] \in \Tab(\lambda)$, we assume that $i_k < j_k$ for all  $1 \le k \le d$ unless otherwise specified. Clearly, 
$$\ISp_\lambda = (\, f_T \mid T \in \Tab(\lambda) \, ).$$

Let $\StTab(\lambda)$ denote the set of standard tableaux of shape $\lambda$. Note that an equivalence class $[T] \in \Tab(\lambda)$ contains {\it at most} one standard tableau. If $[T]$ contains a standard tableau, we say it is standard.

\begin{lem}\label{varphi}
Recall that $S=K[x_1, \ldots, x_{n-1}]$. 
Consider the partitions  $\lambda = (n-d, d)$ and  $\mu = (n-d, d-1)$. For the natural surjection 
$\varphi: R \to S$, we have  
 $$\varphi(\ISp_\lambda )  = ( \, x_i f_T \mid T \in \Tab(\mu), x_i \not \in \supp(f_\mu) \, ) \subset S.$$
\end{lem}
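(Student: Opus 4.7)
The plan is to verify the two inclusions separately. Throughout I will use the elementary observation that permuting the length-$1$ columns of a shape-$\mu$ tableau (namely columns $d, d+1, \ldots, n-d$ in its first row) does not change the Specht polynomial, so these entries may be freely reshuffled; consequently $x_i \not\in \supp(f_{T'})$ for $T' \in \Tab(\mu)$ is equivalent to saying that $i$ appears in the first row of $T'$ outside the first $d-1$ columns.

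For ``$\supseteq$'', suppose $T' \in \Tab(\mu)$ satisfies $x_i \not\in \supp(f_{T'})$. Then $i$ sits in a length-$1$ column of the first row, and after reshuffling I may assume $i$ occupies position $(1,d)$. I would then form $\tilde T \in \Tab(\lambda)$ by placing $n$ in a new box at position $(2,d)$; since $i < n$, this respects the convention $i_d < j_d$, and $f_{\tilde T} = (x_i - x_n)\, f_{T'}$, so $\varphi(f_{\tilde T}) = x_i\, f_{T'}$, as required.

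For ``$\subseteq$'', let $T \in \Tab(\lambda)$. Since $n$ is maximal and $i_k < j_k$ in each column, either (A) $n = j_k$ for some $1 \le k \le d$, or (B) $n = i_k$ for some $d+1 \le k \le n-d$ (case B being vacuous unless $n > 2d$). In case A one has $f_T = (x_{i_k} - x_n)\prod_{l \ne k}(x_{i_l}-x_{j_l})$, so $\varphi(f_T) = x_{i_k}\prod_{l \ne k}(x_{i_l}-x_{j_l})$. The product on the right equals $f_{T'}$ for the shape-$\mu$ tableau $T'$ obtained from $T$ by deleting $n$ and moving the now-isolated $i_k$ into a length-$1$ column of the first row; then $x_{i_k} \not\in \supp(f_{T'})$, and $\varphi(f_T) = x_{i_k}\, f_{T'}$ has the required form.

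Case B is the step that genuinely needs care, because deleting $n$ leaves a tableau of shape $(n-d-1, d)$ rather than $\mu = (n-d, d-1)$. Here $x_n$ does not appear in $f_T$, so $\varphi(f_T) = \prod_{l=1}^{d}(x_{i_l}-x_{j_l})$, and I would peel off the $d$-th column:
$$\varphi(f_T) \;=\; (x_{i_d} - x_{j_d})\,g \;=\; x_{i_d}\,g \;-\; x_{j_d}\,g, \qquad g := \prod_{l=1}^{d-1}(x_{i_l}-x_{j_l}).$$
Now $g = f_{T'_a} = f_{T'_b}$, where $T'_a, T'_b \in \Tab(\mu)$ both have length-$2$ columns $(i_1,j_1), \ldots, (i_{d-1}, j_{d-1})$ and length-$1$ columns filled by the remaining elements of $[n-1]$, with $i_d$ placed at $(1,d)$ in $T'_a$ and $j_d$ placed at $(1,d)$ in $T'_b$. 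Then $x_{i_d} \not\in \supp(f_{T'_a})$, $x_{j_d} \not\in \supp(f_{T'_b})$, and $\varphi(f_T) = x_{i_d}\, f_{T'_a} - x_{j_d}\, f_{T'_b}$ lies in the target ideal. The hard part is precisely this two-term identity in case B, which absorbs the shape jump $(n-d-1, d) \to (n-d, d-1)$; everything else is bookkeeping about length-$1$ columns.
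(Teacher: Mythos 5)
Your proof is correct, but it is organized differently from the paper's. The paper does not argue over all tableaux of shape $\lambda$: it first invokes the standard fact that $\ISp_\lambda$ is generated by the Specht polynomials of \emph{standard} tableaux, chosen with respect to the reversed order $n \prec n-1 \prec \cdots \prec 1$. With that order every standard tableau of shape $\lambda=(n-d,d)$ has $n$ in the top-left box, so $n$ always sits in a length-two column and $\varphi(f_{T'}) = -x_{j_1} f_T$ for a shape-$\mu$ tableau $T$ with $x_{j_1}\notin\supp(f_T)$; your case B (where $n$ lies in a length-one column) simply never occurs, and the inclusion $\varphi(\ISp_\lambda)\subset\frJ$ is a one-line computation. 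Your argument instead runs over arbitrary generators $f_T$, $T\in\Tab(\lambda)$, and absorbs the shape jump in case B by the elementary split $(x_{i_d}-x_{j_d})g = x_{i_d}g - x_{j_d}g$ with $g=f_{T'_a}=f_{T'_b}$ and $x_{i_d},x_{j_d}\notin\supp(g)$, which is valid since both terms are then generators of $\frJ$. What the paper's route buys is brevity, at the cost of quoting the generation-by-standard-polytabloids theorem (with a nonstandard order); what your route buys is self-containedness, needing only the equivalence-class bookkeeping already set up for $\Tab(\lambda)$, at the cost of the extra case. Your ``$\supseteq$'' direction (adjoin $n$ below an entry $i$ in a length-one column, so $\varphi(f_{\tilde T}) = x_i f_{T'}$) is exactly the ``similar argument'' the paper leaves to the reader. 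One cosmetic remark: the condition in the statement should be read as $x_i\notin\supp(f_T)$ (the subscript $\mu$ is a typo), which is how you correctly interpreted it.
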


For notational simplicity,  we set 
$$\frJ :=   ( \, x_i f_T \mid T \in \Tab(\mu), x_i \not \in \supp(f_\mu) \, ).$$

\begin{proof}
It is well-known that 
$$\ISp_\lambda =  ( \, f_{T'} \mid  T' \in \StTab(\lambda) \, ).$$
However, we use the inverse order $n \prec n-1 \prec \cdots  \prec 1 $ here. Hence a standard tableau on $\lambda$  is of the form 
$$
T' = 
\ytableausetup
{mathmode, boxsize=2em}
\begin{ytableau}
n & i_2  & i_3 & \cdots & i_d & \cdots & i_{n-d}  \\
j_1 & j_2 & j_3 & \cdots & j_d   
\end{ytableau}
$$
and we have $$\varphi(f_{T'}) = -x_{j_1} \prod_{k=2}^d (x_{i_k}-x_{j_k}).$$ 
For the tableau 
$$
T= \ytableausetup
{mathmode, boxsize=2em}
\begin{ytableau}
 i_2  & i_3 & \cdots &i_d &  \cdots & i_{n-d}  & j_1\\
j_2 & j_3 & \cdots & j_d  
\end{ytableau}
$$
of shape $\mu$, we have $\varphi(f_{T'}) = -x_{j_1} f_T$ and $x_{j_1} \not \in \supp(f_T)$. 
Hence we have $\varphi(\ISp_\lambda) \subset \frJ$. 

The converse inclusion follows  from a similar argument.   
\end{proof}

The following fact (and its local analog) must be well-known, but we give a proof for the reader's convenience.   

\begin{lem}\label{radical reduction}
Let $A=\bigoplus_{i \in \NN} A_i $ be a noetherian graded ring, and $a \in A$ a homogeneous non-zero divisor of positive degree. If $A/aA$ is reduced, then $A$ is also.  
\end{lem}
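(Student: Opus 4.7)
The plan is to reduce to showing that every homogeneous nilpotent element of $A$ vanishes, and then to exploit the degree of $a$ together with a Krull-intersection style argument made effective by gradedness.

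First I would reduce to homogeneous nilpotents. If $x \in A$ is nilpotent with homogeneous decomposition $x = x_{i_1} + x_{i_2} + \cdots + x_{i_m}$ where $i_1 < i_2 < \cdots < i_m$, then the top-degree component of $x^n$ is $x_{i_m}^n$, so $x^n = 0$ forces $x_{i_m}^n = 0$. Then $x - x_{i_m}$ is again nilpotent, and iterating shows each $x_{i_k}$ is nilpotent. So it suffices to show that a homogeneous nilpotent $x \in A$ must be zero.

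Next, I would show by induction on $k \ge 0$ that any homogeneous nilpotent $x$ lies in $a^k A$. The base case $k = 0$ is trivial. For the inductive step, assume $x = a^k y$ with $y \in A$ homogeneous (of degree $\deg x - k \deg a$). From $x^N = 0$ we get $a^{kN} y^N = 0$, and since $a$ is a non-zero divisor, $y^N = 0$. Thus the image $\bar y$ in $A/aA$ is nilpotent, and the reducedness of $A/aA$ forces $\bar y = 0$, i.e., $y = a z$ for some homogeneous $z \in A$. Hence $x = a^{k+1} z \in a^{k+1} A$, completing the induction.

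Finally, I would conclude by a degree argument. Since $x$ is homogeneous of some fixed degree $d$ and $x \in a^k A$ for every $k$, writing $x = a^k y_k$ with $y_k$ homogeneous forces $\deg y_k = d - k \deg a$. Choosing $k$ so large that $k \deg a > d$ forces $y_k = 0$, hence $x = 0$. There is essentially no obstacle here beyond being careful that the witnesses $y, z, \ldots$ can be taken homogeneous, which is automatic because $a$ and $x$ are homogeneous and $A$ is graded; the use of $\deg a > 0$ is what makes the intersection $\bigcap_k a^k A$ degreewise trivial without invoking any Krull intersection theorem.
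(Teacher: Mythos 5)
Your proof is correct, but it takes a different route from the paper's. You argue element-wise: after the standard reduction to homogeneous nilpotents (gradedness of the nilradical), you show a homogeneous nilpotent $x$ lies in $a^kA$ for every $k$ -- reducedness of $A/aA$ gives $x\in aA$, cancelling the non-zero divisor $a$ from $x^N=0$ shows the quotient is again nilpotent, and you iterate -- and then kill $x$ by the degree count $\deg a>0$. The paper instead goes through prime ideals: since $aA$ is radical it is an intersection of primes $P_i$, each $P_i$ contains a minimal prime $Q_i$ of $A$ with $a\notin Q_i$ (as $a$ is a non-zero divisor), and the same ``divide by $a$ and iterate'' mechanism shows that every homogeneous element of $\bigcap_i Q_i$ lies in $\bigcap_k a^kA=(0)$, so $(0)$ is an intersection of primes. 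The iterative core is the same, but the packaging differs: your version needs no primary/prime decomposition and in fact never uses the noetherian hypothesis, and it makes the vanishing of $\bigcap_k a^kA$ explicit degree by degree; the paper's version leans on the noetherian structure (finitely many primes over $aA$, minimal primes consisting of zero divisors) and leaves $\bigcap_k a^kA=(0)$ as an unjustified (though easy, by the same degree argument or Krull intersection) step. The only points worth spelling out in your write-up are the ones you already flag: that the witnesses $y_k$ can be chosen homogeneous (compare graded components in $x=a^ky$), and that negative-degree components vanish because the grading is over $\NN$.
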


\begin{proof}
Since $aA$ is a radical ideal, there are prime ideals $P_1, \ldots, P_m$ of $A$ such that $\sqrt{I} =\bigcap_{i=1}^m P_i$. Since $a$ is a non-zero divisor, we have $\height(P_i) \ge 1$ for all $i$.  
So, for each $i$, we can take a minimal prime $Q_i$ of $A$ contained in $P_i$. Take a homogeneous element $x \in \bigcap_{i=1}^m Q_i$. Since $\bigcap_{i=1}^m Q_i \subset \bigcap_{i=1}^m P_i =aA$,  
there is a homogeneous element $x_1 \in A$ such that $x=a x_1$. Since $a$ is a non-zero divisor, we have $a \not \in Q_i$ for all $i$, and it means that $x_1 \in Q_i$ for all $i$, and hence $x_1 \in \bigcap_{i=1}^m Q_i$.  Applying the above argument to $x_1$, we can find $x_2 \in A$ such that $x_1 = a x_2$, that is, $x= a^2 x_2$. Repeating this argument, we have $x  \in \bigcap_{i=1}^\infty a^i A = (0)$, and it implies that $\bigcap_{i=1}^m Q_i =(0)$. Since  $Q_i$'s are prime ideals, $(0)$ is a radical ideal. 
\end{proof}

For partitions $\lambda=(n-d,d)$ and $\mu=(n-d,d-1)$, we assume the induction hypothesis that $\ISp_\mu$ is radical. By Lemma \ref{radical of frJ}, we have   $\sqrt{\frJ} = \ISp_\mu \cap I_{\<d \>}$. 
If  $\frJ =\varphi(\ISp_\lambda)$ is radical, then so is $\ISp_\lambda$ itself  by Lemma~\ref{radical reduction}.
So it suffices to show that $\frJ \supset \ISp_\mu \cap I_{\<d\>}. $

\begin{lem}\label{reduction}
Let $\lambda$ and $\mu$ be as above.  Assume that $\ISp_\mu$ is a radical ideal. Then $\ISp_\lambda$ is a radical ideal, if the following condition is satisfied. 
\begin{itemize}
\item[$(*)$] If  $\phi = x^\ba(\sum_{T \in \Tab(\mu)} c_T f_T)  \in I_{\<d\>}$ for some squareferee monomial $x^\ba \in S=K[x_1, \ldots, x_{n-1}]$ and  $c_T \in K$, 
we have $\phi \in   \frJ$.  
\end{itemize}
\end{lem}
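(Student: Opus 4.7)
By Lemma~\ref{varphi}, $\varphi(\ISp_\lambda) = \frJ$, and by Lemma~\ref{radical of frJ} combined with the inductive hypothesis that $\ISp_\mu$ is radical,
\[
\sqrt{\frJ} \;=\; \sqrt{\varphi(\ISp_\lambda)} \;=\; \sqrt{\ISp_\mu} \cap I_{\<d\>} \;=\; \ISp_\mu \cap I_{\<d\>}.
\]
The containment $\frJ \subseteq \ISp_\mu \cap I_{\<d\>}$ is immediate: each generator $x_i f_T$ of $\frJ$ (with $x_i \notin \supp(f_T)$) lies in $\ISp_\mu$ and, since every monomial of $f_T$ is squarefree of support-size $d-1$ disjoint from $\{i\}$, also in $I_{\<d\>}$. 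So radicality of $\frJ$ is equivalent to the reverse inclusion $\ISp_\mu \cap I_{\<d\>} \subseteq \frJ$, and once that is established, Lemma~\ref{radical reduction} applied to the non-zero divisor $x_n$ on $R/\ISp_\lambda$ delivers the radicality of $\ISp_\lambda$ itself. Hence the whole proof reduces to deducing $\ISp_\mu \cap I_{\<d\>} \subseteq \frJ$ from $(*)$.

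For this, take $\phi \in \ISp_\mu \cap I_{\<d\>}$, choose a presentation $\phi = \sum_{T \in \Tab(\mu)} g_T f_T$, and expand each $g_T = \sum_\bb c_{T,\bb}\, x^\bb$ as a $K$-linear combination of monomials. Regrouping,
\[
\phi \;=\; \sum_\bb x^\bb \psi_\bb, \qquad \psi_\bb \;:=\; \sum_{T \in \Tab(\mu)} c_{T,\bb}\, f_T,
\]
so that each $\psi_\bb$ is a $K$-linear combination of Specht polynomials. Each summand $x^\bb \psi_\bb$ is of exactly the form handled by $(*)$, with two caveats: the monomial $x^\bb$ may fail to be squarefree, and the individual summand need not lie in $I_{\<d\>}$. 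The plan is to exploit the non-uniqueness of the presentation $\phi = \sum g_T f_T$ together with the fact that $x_i f_T \in \frJ$ whenever $x_i \notin \supp(f_T)$ to rewrite $\phi$ modulo $\frJ$ as a sum $\sum_\ba x^\ba \psi_\ba$ in which each $x^\ba$ is a squarefree monomial \emph{and} each piece $x^\ba \psi_\ba$ lies individually in $I_{\<d\>}$. Condition $(*)$ then applies term by term and places each summand, hence $\phi$, in $\frJ$.

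The hard part will be this reorganization. The membership $\phi \in I_{\<d\>}$ is a condition on the total sum and need not pass to individual summands, so the new presentation must be chosen so that the multi-graded cancellations that produce the $I_{\<d\>}$-membership of $\phi$ become manifest piece by piece. For a non-squarefree $x^\bb$ with $x_i^2 \mid x^\bb$, one would split $\psi_\bb = \psi_\bb' + \psi_\bb''$, where $\psi_\bb'$ is a combination of $f_T$'s whose monomials avoid $x_i$ (so that the $x_i$-factor can be absorbed into $\frJ$ by the generator $x_i f_T$) and $\psi_\bb''$ consists of the monomials involving $x_i$, which must be rewritten using the Specht-module relations so that the squared factor ultimately disappears. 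Making this rewriting terminate and simultaneously arranging the piece-wise $I_{\<d\>}$ condition is the combinatorial heart of the argument, and is exactly what condition $(*)$ is designed to bypass once only squarefree coefficient monomials remain.
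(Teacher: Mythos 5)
Your framing agrees with the paper's: radicality of $\frJ=\varphi(\ISp_\lambda)$ amounts to $\ISp_\mu\cap I_{\<d\>}\subseteq\frJ$ (using Lemma~\ref{radical of frJ} and the inductive radicality of $\ISp_\mu$), and Lemma~\ref{radical reduction} then transfers radicality to $\ISp_\lambda$; you also decompose $\phi=\sum_T g_T f_T$ according to the monomials of the coefficients $g_T$. But the proposal stops exactly where the lemma's actual content lies. You flag two issues --- the coefficient monomials $x^\bb$ need not be squarefree, and the individual pieces $x^\bb\psi_\bb$ need not lie in $I_{\<d\>}$ --- and then only describe a plan (``exploit the non-uniqueness of the presentation,'' ``rewrite $\phi$ modulo $\frJ$ using the Specht-module relations,'' ``the hard part will be this reorganization'') without carrying it out. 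That is a genuine gap, and moreover the plan misdiagnoses the situation: no rewriting of the presentation is needed. Since every term of every $f_T$ is a squarefree monomial of degree $d-1$, a monomial $x^\bb$ occurring in some $x^\ba f_T$ and lying outside the monomial ideal $I_{\<d\>}$ must have support of size exactly $d-1$, and then $x^\ba$ is forced to equal $x^\bb/\prod_{b_i>0}x_i$. Hence monomials outside $I_{\<d\>}$ cannot cancel between pieces $\psi_\ba=\sum_T c_{T,\ba}f_T$ with different $\ba$, so each $x^\ba\psi_\ba$ automatically lies in $I_{\<d\>}$ as soon as the total sum $\phi$ does. This short argument is precisely the paper's proof that $(*)$ implies the needed condition, and it is the step your proposal leaves unproven.

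The squarefreeness caveat is likewise disposed of far more simply than your proposed splitting of $\psi_\bb$: because $I_{\<d\>}$ is generated by squarefree monomials, for $h\in\ISp_\mu$ one has $x^\bb h\in I_{\<d\>}$ if and only if $\bigl(\prod_{b_i>0}x_i\bigr)h\in I_{\<d\>}$, and $\bigl(\prod_{b_i>0}x_i\bigr)h\in\frJ$ implies $x^\bb h\in\frJ$; so the squarefree hypothesis in $(*)$ can simply be dropped, with no use of the generators $x_if_T$ or of Specht relations. In short, the skeleton of your reduction is correct, but the two points you label the ``combinatorial heart'' are exactly what must be proved, and the route you sketch for them is neither executed nor the one needed.
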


\begin{proof}
First,  note that the assumption that $x^\ba$ is squarefree can be easily dropped. In fact,  for any $\bb \in \NN^{n-1}$ and $h \in \ISp_\mu$, 
$x^\bb h  \in I_{\<d\>}$ implies   $(\prod_{b_i > 0} x_i )\cdot h \in I_{\<d\>}$, and  $(\prod_{b_i > 0} x_i )\cdot h \in \frJ$ implies $x^\bb h  \in \frJ$. 

By the remark just before the lemma, $\ISp_\lambda$ is a radical ideal, if the condition 
\begin{itemize}
\item[$(**)$] If  $\psi = \sum_{T \in \Tab(\mu)} g_T f_T  \in I_{\<d\>}$ for some polynomials $g_T \in S$, then $\psi \in   \frJ$. 
\end{itemize}  is satisfied. 
So it suffices to show that $(*)$ implies $(**)$. 

Assume that $\psi = \sum_{T \in \Tab(\mu)} g_T f_T  \in I_{\<d\>}$.   
Take $\ba \in \NN^{n-1}$, and let $c_T x^\ba$ be the degree $\ba$ term of $g_T$    
(of course, $c_T$ can be 0). Now we want to show that  
$$\psi_\ba:=  x^\ba \sum_{T \in \Tab(\mu)} c_T f_T$$ 
is contained in $I_{\<d\>}$. 
%, equivalently, $\sqrt{x^\ba} \not \in \supp(\sum_{i=1}^m c_i f_{T_i} )$.  
By contradiction, we assume that the degree  $\bb$ term of $\psi_\ba$ is not contained in $I_{\<d\>}$.  
Since  $x^\bb \not \in I_{\<a\>}$, and all terms of $f_T$ are squarefree and have degree $d-1$, we have $x^\ba = x^\bb/({\prod_{b_i > 0}x_i)}$. Hence the degree $\bb$ term of $g_Tf_T$ equals that of $c_T x^\ba f_T$. Therefore, the degree $\bb$ term of   $\psi_\ba$ coincides with that of  $\psi \in I_{\<d\>}$. 
This is a contradiction, and hence we have $\psi_\ba \in I_{\<d\>}$. Since $\psi_\ba$ can  play the role of $\phi$ in  the condition $(*)$%(in the beginning of this proof, we showed that $x^\bb$ need not to be squarefree)
, so $(*)$ implies $\psi_\ba \in \frJ$. Hence 
$\psi = \sum_{\ba \in \NN^{n-1}} \psi_\ba \in  \frJ$, and $(*)$ implies $(**)$ 
\end{proof}

\begin{lem}\label{support}
With the same notation as Lemma~\ref{reduction}, 
if  $x^\ba \not \in \supp(f_T)$ for a tableau $T$ of shape $\mu$, then we have  $x^\ba f_T \in  \frJ.$  
\end{lem}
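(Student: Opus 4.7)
I will assume $x^{\ba}$ is squarefree, writing $x^{\ba} = \prod_{\ell \in A} x_\ell$ for some $A \subseteq [n-1]$ (this is the setting in which the lemma is actually applied in the proof of Lemma~\ref{reduction}). Representing a tableau $T$ of shape $\mu = (n-d, d-1)$ by its ``column pairs'' $(i_k, j_k)$ for $k = 1, \ldots, d-1$ (entries in the length-$2$ columns) together with its ``top-row-only'' entries $i_d, \ldots, i_{n-d}$, only the former can appear in any nonzero term of $f_T$; hence for a single variable we have $x_\ell \notin \supp(f_T)$ precisely when $\ell \in \{i_d, \ldots, i_{n-d}\}$.

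Case~1: if some $\ell \in A$ is top-row-only, then $x_\ell f_T$ is itself a defining generator of $\frJ$, and $x^{\ba} f_T = (x^{\ba}/x_\ell) \cdot x_\ell f_T \in \frJ$.

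Case~2: $A \subseteq \{i_k, j_k : 1 \le k \le d-1\}$. A pigeonhole argument then forces some column $k_0$ to satisfy $\{i_{k_0}, j_{k_0}\} \subseteq A$; otherwise, choosing $\sigma(k) \in \{i_k, j_k\} \cap A$ whenever that intersection is nonempty (and arbitrarily otherwise) would yield a nonzero term $\prod_{k} x_{\sigma(k)}$ of $f_T$ divisible by $x^{\ba}$, contradicting $x^{\ba} \notin \supp(f_T)$. For such a $k_0$, pick any $m \in \{d, \ldots, n-d\}$, which is nonempty because $\lambda = (n-d, d)$ being a partition of $n$ forces $n \ge 2d$. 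Let $T_1$ (resp.\ $T_2$) be the tableau obtained from $T$ by swapping $j_{k_0} \leftrightarrow i_m$ (resp.\ $i_{k_0} \leftrightarrow i_m$). The elementary identity
$$x_{i_{k_0}} - x_{j_{k_0}} = (x_{i_{k_0}} - x_{i_m}) + (x_{i_m} - x_{j_{k_0}}),$$
multiplied by $\prod_{s \ne k_0}(x_{i_s} - x_{j_s})$, yields $f_T = f_{T_1} + f_{T_2}$. In $T_1$ the entry $j_{k_0}$ has migrated to the top-row-only slot formerly held by $i_m$, so $x_{j_{k_0}} \notin \supp(f_{T_1})$; this places $x_{j_{k_0}} f_{T_1}$ in $\frJ$ and gives $x^{\ba} f_{T_1} = (x^{\ba}/x_{j_{k_0}}) \cdot x_{j_{k_0}} f_{T_1} \in \frJ$ (using $j_{k_0} \in A$). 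Symmetrically $x^{\ba} f_{T_2} \in \frJ$, and summing yields the claim.

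The main obstacle is the pigeonhole step in Case~2. Once that is in place, the Garnir-style two-term split together with the ``spare'' top-row-only slot $i_m$ (guaranteed by $n \ge 2d$) reduces everything to single-variable support checks that follow simply from inspecting which entries of $T_1$ and $T_2$ now occupy length-$2$ columns.
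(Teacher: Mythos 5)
Your proof is correct and follows essentially the same route as the paper: after reducing (your Case~1) to the situation where every variable of $x^\ba$ lies in $\supp(f_T)$, you locate a column whose two entries both divide $x^\ba$ and apply the same two-term split $f_T=f_{T_1}+f_{T_2}$ via a spare top-row-only entry, concluding from the single-variable membership $x_{j_{k_0}}f_{T_1},\, x_{i_{k_0}}f_{T_2}\in\frJ$. The only difference is that you make the pigeonhole step and the existence of the spare slot ($n\ge 2d$) explicit, which the paper leaves implicit.
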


\begin{proof}
We may assume that if $x_i$ divides $x^\ba$ then it belongs to $\supp(f_T)$. 
Then, by the shape of the Specht polynomial $f_T$, there are distinct $i,j \in [n-1]$ such that $x_ix_j \, | \, x^\ba$ but $x_i x_j \not \in \supp(f_T)$. 
Now $i$ and $j$ are in the same column in $T$, and we may assume that $T$ is of the form 
$$
\ytableausetup
{mathmode, boxsize=2em}
\begin{ytableau}
i & i_2  & \cdots &i_{d-1} & k  & \none[\cdots]\\
j & j_2 & \cdots & j_{d-1}  \\
\end{ytableau}
$$
(note that $n-d > d-1$ now). 
Consider two other tableaux on $\mu$ as follows 
$$
T_1 =\ytableausetup
{mathmode, boxsize=2em}
\begin{ytableau}
i & i_2  & \cdots &i_{d-1} & j  & \none[\cdots]\\
k & j_2 & \cdots & j_{d-1}  \\
\end{ytableau} 
\quad \text{and} \quad 
T_2 =\ytableausetup
{mathmode, boxsize=2em}
\begin{ytableau}
k & i_2  & \cdots &i_{d-1} & i  & \none[\cdots]\\
j & j_2 & \cdots & j_{d-1}  \\
\end{ytableau} 
$$
Then we have $f_T =f_{T_1} +f_{T_2}$. Hence 
$$x_ix_j f_T = x_ix_j (f_{T_1} +f_{T_2}) = x_i (x_j f_{T_1}) + x_j (x_i f_{T_2}).$$
Since $x_j  \not\in \supp(f_{T_1})$, we have $x_j f_{T_1} \in \frJ$. 
Similarly, $x_i f_{T_2} \in \frJ$. 
Hence $x_ix_j f_T \in  \frJ$, and it implies that $x^\ba f_T \in \frJ$.
\end{proof}

In the  condition $(*)$ of Lemma~\ref{reduction}, we may assume that $c_T \ne 0$ implies $x^\ba \in \supp(f_T)$ by Lemma~\ref{support}, and $x^\ba = x_1x_2\cdots x_k$ by the symmetry. Hence we have the following.  

\begin{cor}\label{reduction2}
Let $\lambda$ and $\mu$ be as above. Assume that $\ISp_\mu$ is a radical ideal. Then $\ISp_\lambda$ is a radical ideal, if the following condition is satisfied. 
\begin{itemize}
\item[$(*\!*\!*)$] 
For  the squarefree monomial $x^\ba = x_1x_2\cdots x_k \in S$  with $1 \le k \le d-1$, set $X:= \{\, T \in \Tab(\mu) \mid x^\ba \in \supp(f_T) \, \}$. If  $\phi = x^\ba(\sum_{T \in X} c_T f_T)  \in I_{\<d\>}$ for some $c_T \in K$, we have $\phi \in   \frJ.$ 
\end{itemize}
\end{cor}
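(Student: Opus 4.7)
The plan is to combine Lemma~\ref{reduction} and Lemma~\ref{support} with the evident $S_{n-1}$-symmetry in order to cut the condition $(*)$ of Lemma~\ref{reduction} down to the more restrictive $(*\!*\!*)$.

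First I would invoke Lemma~\ref{reduction}, so the task reduces to proving $(*)$: for any squarefree $x^\ba \in S$ and scalars $c_T \in K$, if $\phi := x^\ba \sum_{T \in \Tab(\mu)} c_T f_T$ lies in $I_{\<d\>}$, then $\phi \in \frJ$. I would split $\phi = \phi_1 + \phi_2$, where $\phi_1$ collects the contributions from $T$ with $x^\ba \not\in \supp(f_T)$ and $\phi_2 = x^\ba \sum_{T \in X} c_T f_T$. Lemma~\ref{support} immediately places $\phi_1$ in $\frJ$. A variable-count argument shows that $\phi_1$ also lies in $I_{\<d\>}$: each term of $f_T$ is a squarefree monomial $m$ of degree $d-1$, and since $x^\ba \nmid m$ (by the definition of $\supp$), the product $x^\ba m$ involves at least $d$ distinct variables. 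Hence $\phi_2 = \phi - \phi_1 \in I_{\<d\>}$ as well.

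Next, I would exploit the fact that the $S_{n-1}$-action on $S$ permutes variables and preserves $\frJ$, $I_{\<d\>}$, and the set $\Tab(\mu)$. So I can pick $\sigma \in S_{n-1}$ with $\sigma(x^\ba) = x_1 x_2 \cdots x_k$, where $k = \deg x^\ba$, and it suffices to show $\sigma(\phi_2) \in \frJ$. When $k \geq d$, the set $X$ is empty because monomials in $\supp(f_T)$ have degree at most $d-1$, so $\phi_2 = 0$. When $k = 0$, the element $\sum_{T \in X} c_T f_T$ is a degree $d-1$ element of $U_\mu$ lying in $I_{\<d\>}$; any nonzero term would have exactly $d-1$ distinct variables, contradicting membership in $I_{\<d\>}$, so the sum vanishes. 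For $1 \le k \le d-1$, condition $(*\!*\!*)$ applies directly and yields $\sigma(\phi_2) \in \frJ$, so $\phi = \phi_1 + \phi_2 \in \frJ$.

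The only substantive step is the variable-count observation that puts $\phi_1$ in $I_{\<d\>}$; beyond that, everything is formal manipulation of Lemmas~\ref{reduction} and~\ref{support} together with the $S_{n-1}$-symmetry, and I do not anticipate any serious obstacle.
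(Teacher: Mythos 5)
Your proposal is correct and takes essentially the same route as the paper: invoke Lemma~\ref{reduction}, use Lemma~\ref{support} to discard the tableaux with $x^\ba \notin \supp(f_T)$, and use the $S_{n-1}$-symmetry to normalize $x^\ba = x_1\cdots x_k$. You merely make explicit what the paper leaves implicit, namely the variable-count showing the discarded part also lies in $I_{\<d\>}$ (so that $(*\!*\!*)$ applies to the remaining sum) and the trivial edge cases $k=0$ and $k\ge d$.
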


In the sequel, $X$ means the set defined in $(*\!*\!*)$. 
An element of $X$ has the following ``normal form" 
\begin{equation}\label{T}
T= 
\ytableausetup
{mathmode, boxsize=2.3em}
\begin{ytableau}
1 & 2  & \cdots & k & i_{k+1} & \cdots & i_{d-1} & i_d&  i_{d+1} & \cdots & i_{n-d} \\
j_1 & j_2 & \cdots &j_k & j_{k+1} & \cdots &j_{d-1}  
\end{ytableau}, 
\end{equation}
where $i_{k+1} < i_{k+2} < \cdots < i_{d-1}$, $i_d < i_{d+1} < \cdots < i_{n-d}$ 
and $i_l < j_l$ for all $k < l < d$. 

\begin{lem}\label{permutation}
With  the same notation as Corollary~\ref{reduction2},   let $T \in X$ be a tableau of the form \eqref{T}.  For any permutation $\sigma$ on $\{i_d, \ldots, i_{n-d}, j_1, \ldots, j_k \}$, we have $x^\ba(f_T - f_{\sigma T}) \in \frJ$. Here $\sigma T$ is the Young tableau of shape $\mu$ given by replacing each $i$ in $T$ by $\sigma(i)$.   
\end{lem}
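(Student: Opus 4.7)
The plan is to write $\sigma$ as a product of transpositions and prove the claim for each factor via a telescoping argument; since every permutation involved fixes $\{1,2,\ldots,k\}$ pointwise, each intermediate tableau still has $1,\ldots,k$ at the tops of its first $k$ columns and therefore retains the structure of the normal form \eqref{T}. It thus suffices to show $x^\ba(f_T - f_{\tau T}) \in \frJ$ for a single transposition $\tau$ on $\{i_d,\ldots,i_{n-d},j_1,\ldots,j_k\}$ applied to any such tableau. There are three types of $\tau$: (a) both elements in $\{i_d,\ldots,i_{n-d}\}$; (b) $\tau=(i_l,j_p)$ with $p\le k$; and (c) $\tau=(j_p,j_q)$ with $p,q\le k$.

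Type (a) is immediate since solo columns contribute $1$ to the Specht polynomial, so $f_T=f_{\tau T}$. For type (b), the identity $(x_p-x_{j_p})-(x_p-x_{i_l})=x_{i_l}-x_{j_p}$ yields $f_T-f_{\tau T}=f_{T'}$, where $T'$ is obtained from $T$ by placing $(i_l,j_p)$ in column $p$ and $(p)$ in the solo column that previously held $i_l$. In $T'$ the index $p$ appears only in a one-box column, so $x_p\notin\supp(f_{T'})$; by the definition of $\frJ$ one has $x_p f_{T'}\in\frJ$, and therefore $x^\ba f_{T'}\in\frJ$.

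Type (c) is the main obstacle. A direct calculation using
\[
(x_p-x_{j_p})(x_q-x_{j_q})-(x_p-x_{j_q})(x_q-x_{j_p})=(x_p-x_q)(x_{j_p}-x_{j_q})
\]
shows that $f_T-f_{\tau T}=f_{T''}$, where $T''$ has column $p=(p,q)$ and column $q=(j_p,j_q)$; but now $x^\ba$ may still lie in $\supp(f_{T''})$, so the argument from type (b) cannot be applied directly. The key trick is to invoke the Garnir relation from Lemma~\ref{support} between the column $(p,q)$ of $T''$ and \emph{any} solo column $(i_l)$; such a solo column exists because $\lambda=(n-d,d)$ being a partition forces $n\ge 2d$, leaving at least $n-2d+1\ge 1$ solo columns in $\mu$. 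This decomposes $f_{T''}=f_{T''_1}+f_{T''_2}$, where $T''_1$ has column $(p,i_l)$ and solo $(q)$, while $T''_2$ has column $(i_l,q)$ and solo $(p)$. In $T''_1$ the index $q$ lies only in a solo column, so $x_q\notin\supp(f_{T''_1})$ and $x^\ba f_{T''_1}\in\frJ$; symmetrically $x^\ba f_{T''_2}\in\frJ$, and summing completes the proof. The role of the auxiliary index $i_l$ is to ``bridge'' the two bottom entries $j_p,j_q$, converting type (c) into a Garnir-governed sum of two polynomials each missing one of $x_p,x_q$ from its support.
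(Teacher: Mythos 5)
Your proof is correct, and its engine is the same as the paper's: telescope over a transposition decomposition of $\sigma$, note that the entries $1,\dots,k$ and the middle columns are untouched so every intermediate tableau keeps the shape of \eqref{T} (the ordering conventions are never used), and at each step use a Garnir-type identity to rewrite the difference as a sum of Specht polynomials $f_{T'}$ in which some $x_p$ with $p\le k$ no longer lies in $\supp(f_{T'})$, whence $x^\ba f_{T'}\in\frJ$ by the definition of $\frJ$. The one genuine divergence is the choice of decomposition: the paper writes $\sigma$ as a product of \emph{mixed} transpositions $(i_a,j_b)$ only, which is possible exactly because both blocks are nonempty (there is at least one solo column since $n\ge 2d$, and $k\ge 1$), e.g.\ $(j_b,j_{b'})=(i_a,j_b)(i_a,j_{b'})(i_a,j_b)$; with that choice only your case (b) ever occurs, and your cases (a) and (c) never need to be treated. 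Your handling of case (c) is nevertheless valid, and it can even be shortened: since $x_px_q$ divides $x^\ba$ while $x_px_q\notin\supp(f_{T''})$, one has $x^\ba\notin\supp(f_{T''})$, so Lemma~\ref{support} gives $x^\ba f_{T''}\in\frJ$ immediately, without redoing the Garnir split through an auxiliary solo column. So the two arguments trade a small group-theoretic observation (mixed transpositions generate) against an extra case analysis; both are complete.
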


\begin{proof}
It is easy to see that $\sigma$ is a product of transpositions of the form $\tau= (i_a, j_b)$ for $d \le a \le n-d$ and $1 \le b \le k$. 
So it suffices to show that $x^\ba(f_T - f_{\tau T}) \in \frJ$.   
By the symmetry,  we may assume that $\tau = (i_d, j_1)$.  
We have 
$$
\tau T= 
\ytableausetup
{mathmode, boxsize=2.3em}
\begin{ytableau}
1 & 2  & \cdots & k & i_{k+1} & \cdots & i_{d-1} & j_1 & i_{d+1}&\cdots & i_{n-d} \\
i_d & j_2 & \cdots &j_k & j_{k+1} & \cdots &j_{d-1}   
\end{ytableau} 
$$
Set 
$$
T'= 
\ytableausetup
{mathmode, boxsize=2.3em}
\begin{ytableau}
i_d & 2  & \cdots & k & i_{k+1} & \cdots & i_{d-1} & 1 &  i_{d+1} & \cdots & i_{n-d} \\
j_1 & j_2 & \cdots &j_k & j_{k+1} & \cdots &j_{d-1}   
\end{ytableau} 
$$
Then $x^\ba(f_T -f_{\tau T}) = x^\ba f_{T'} \in  \frJ,$
since $x_1 \not \in \supp(f_{T'}).$
\end{proof} 

For the tableau $T$ of \eqref{T}, set 
$$h_T := \prod_{l=k+1}^{d-1}(x_{i_l}-x_{j_l}).$$ 

\begin{lem}\label{relation of h_T}
With the above notation, if $\phi = x^\ba(\sum_{T \in X} c_T f_T)  \in I_{\<d\>}$ for some $c_T \in K$, then we have $$\sum_{T \in X} c_T h_T =0.$$ 
\end{lem}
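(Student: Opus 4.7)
The plan is to compare monomial supports to the generators of $I_{\<d\>}$. A monomial in $S$ lies in $I_{\<d\>}$ if and only if its support (the set of variables actually dividing it) has cardinality at least $d$. Consequently, once $\phi$ is fully expanded and like terms are collected, every surviving monomial must have support of size $\ge d$; any linear combination of monomials of smaller support appearing in $\phi$ must cancel identically.

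For $T \in X$ in the normal form \eqref{T}, the columns of $\mu$ from the $d$-th onward are singletons and contribute nothing, so
$$f_T \;=\; \prod_{l=1}^{k}(x_l - x_{j_l})\cdot h_T.$$
Multiplying by $x^\ba = x_1 x_2 \cdots x_k$ and expanding,
$$x^\ba f_T \;=\; \prod_{l=1}^{k}\bigl(x_l^2 - x_l x_{j_l}\bigr)\cdot h_T \;=\; \sum_{S\subseteq[k]} (-1)^{|S|}\Bigl(\prod_{l\notin S} x_l^2 \prod_{l\in S} x_l x_{j_l}\Bigr)\cdot h_T.$$
Since in the normal form the entries $j_1,\ldots,j_k$ all lie outside $[k]$, and the indices $i_l,j_l$ ($k< l < d$) appearing in $h_T$ are distinct from one another and from $[k]\cup\{j_1,\ldots,j_k\}$, every monomial produced by the $S$-summand has support of exactly $k + |S| + (d-1-k) = d-1+|S|$ variables. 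Hence monomials of support $< d$ arise only from $S=\emptyset$, and their total contribution is $c_T\, x_1^2 x_2^2\cdots x_k^2 \cdot h_T$ for each $T$.

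Because $\phi\in I_{\<d\>}$ forces the sum of all monomials of support $<d$ in $\phi$ to vanish, we obtain the polynomial identity
$$x_1^2 x_2^2\cdots x_k^2 \cdot \sum_{T\in X} c_T\, h_T \;=\; 0 \quad \text{in } S.$$
Since $x_1^2\cdots x_k^2$ is a nonzero element of the integral domain $S$, this forces $\sum_{T\in X} c_T\, h_T = 0$, which is the desired conclusion. The whole argument is conceptually short once the support characterization of $I_{\<d\>}$ is invoked; the one point that genuinely requires verification is the support count $d-1+|S|$, and this rests precisely on the fact that in the normal form the labels $1,\ldots,k$ occur only as the first coordinate of the factors $(x_l - x_{j_l})$, so the $S=\emptyset$ term is the unique source of monomials in which $x_1,\ldots,x_k$ all appear with exponent $2$.
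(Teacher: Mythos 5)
Your proof is correct and is essentially the paper's argument: the paper likewise isolates the $S=\emptyset$ term by observing $x^\ba(f_T - x^\ba h_T)\in I_{\<d\>}$, concludes $x^{2\ba}\sum_{T\in X}c_T h_T\in I_{\<d\>}$, and then uses the same support count (every term of $x^{2\ba}h_T$ has support of size $d-1$) together with $S$ being a domain to get $\sum_{T\in X}c_T h_T=0$. Your explicit expansion over $S\subseteq[k]$ is just a more detailed writing of the same step.
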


\begin{proof}
Since  $x^\ba(f_T -x^{\ba}h_T) \in I_{\<d\>}$ for each $T \in X$, we have 
$$\phi - x^{2\ba} \sum_{T \in X} c_T h_T = x^\ba \sum_{T \in X} c_T (f_T -x^{\ba}h_T  )\in I_{\<d\>},$$ and hence $ x^{2\ba} (\sum_{T \in X} c_T h_T) \in I_{\<d\>}$.  
On the other hand, any nonzero term of $x^{2\ba} (\sum_{T \in X} c_T h_T)$ dose not belong to $I_{\<d\>}$. 
Hence we have $\sum_{T \in X} c_T h_T =0$. 
\end{proof}

%So we will analyze the ``structure'' of the relation $\sum_{T \in X} c_T h_T =0$. 

\medskip

For the tableau $T$  of \eqref{T}, consider the tableau 
\begin{equation}
\widehat{T}= 
\ytableausetup
{mathmode, boxsize=2.3em}
\begin{ytableau}
i_{k+1} & \cdots & i_{d-1} & i_d&  i_{d+1} & \cdots & i_{n-d} \\
j_{k+1} & \cdots &j_{d-1}   
\end{ytableau} 
\end{equation}
of shape $(n-d-k, d-1-k)$, 
and the tableaux 
\begin{equation}
\widetilde{T}= 
\ytableausetup
{mathmode, boxsize=2.3em}
\begin{ytableau}
i_{k+1} & \cdots & i_{d-1} & i_d&  i_{d+1} & \cdots & i_{n-d}  & j_1 & j_2 & \cdots &j_k \\
j_{k+1} & \cdots &j_{d-1}   
\end{ytableau} 
\end{equation}
on shape $(n-d, d-1-k)$. Clearly, $h_T = f_{\widehat{T}} = f_{\widetilde{T}}$. 

We say $T \in X$ is {\it quasi $h$-standard} (resp. {\it $h$-standard}), if $\widehat{T}$ (resp.  $\widetilde{T}$ ) is a standard tableau. Here we regard $\widehat{T}$ and  $\widetilde{T}$ as the tableaux with the letter set $[n] \setminus ([k] \cup \{j_1, \ldots, j_k\})$ and $[n] \setminus [k]$ respectively.  
Set $$Y:=\{ T \in X \mid \text{$T$ is quasi $h$-standard} \}$$
and 
$$Z:=\{ T \in X \mid \text{$T$ is $h$-standard} \}.$$
%Note that $h_T = h_{T'}$ might happen for distinct $T,T' \in Z$, on the other hand, $\{ h_T \mid T \in Y\}$ is linearly independent. 

\begin{lem}\label{quasi h-standard}
If $T \in X$, there are $T_1, \ldots, T_m  \in Y$ and   $c_1, \ldots, c_m \in K$ for such that 
$$f_T =  \sum_{l=1}^m c_l f_{T_l}.$$ 
Moreover, if $T$ is of the form \eqref{T}, then we may assume that each $T_l$ is of the form
$$
\ytableausetup
{mathmode, boxsize=2.3em}
\begin{ytableau}
1 & 2  & \cdots & k & i_{k+1}' & \cdots & i_{d-1}' & i_d'&  i_{d+1}' & \cdots & i_{n-d}' \\
j_1 & j_2 & \cdots &j_k & j_{k+1}' & \cdots &j_{d-1}'   
\end{ytableau} 
$$
with $(i_d, i_{d+1}, \ldots, i_{n-d} )\le (i_d', i_{d+1}', \ldots, i_{n-d}' )$ in the coordinate-wise order.
\end{lem}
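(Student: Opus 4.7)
The plan is to reduce the lemma to the classical Garnir straightening for two-row Specht polynomials, with some bookkeeping to extract the coordinate-wise inequality on the tail. From the normal form \eqref{T} we have the factorization $f_T = \prod_{l=1}^{k}(x_l - x_{j_l}) \cdot f_{\widehat{T}}$, and the prefactor is identical for $T$ and for every admissible $T_l$. It therefore suffices to expand $f_{\widehat{T}}$ as a $K$-linear combination $\sum_l c_l f_{\widehat{T}_l}$, each $\widehat{T}_l$ standard of shape $(n-d-k, d-1-k)$ on the letter set $L := [n] \setminus ([k] \cup \{j_1, \ldots, j_k\})$; reattaching the first $k$ columns $1/j_1, \ldots, k/j_k$ to each $\widehat{T}_l$ then yields the desired $T_l \in Y$.

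The straightening is carried out by iterating the classical two-row Garnir relations. Two types of steps arise. An \emph{internal} Garnir step, applied to resolve a descent in the bottom row of the double-box part (some $j_c > j_{c+1}$ with $k < c < d-1$), rewrites $f_{\widehat{T}}$ as a sum of two Specht polynomials obtained by redistributing three entries between two adjacent double-box columns, and leaves the single-box entries $i_d, \ldots, i_{n-d}$ untouched. A \emph{boundary} Garnir step, applied when $i_{d-1} > i_d$, takes the form $f_{\widehat{T}} = f_{(i_{d-1}\, i_d)\widehat{T}} + f_{(j_{d-1}\, i_d)\widehat{T}}$, where the transpositions act on labels. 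A lexicographic invariant on the top row ensures termination. For the coordinate-wise inequality, track the multiset $M$ of labels occupying the single-box columns: an internal step does not touch $M$, while at a boundary step, since $i_{d-1} < j_{d-1}$ (column-sortedness) and $i_{d-1} > i_d$ (the triggering descent), we have $i_d = \min\{i_{d-1}, j_{d-1}, i_d\}$, so in either summand the occupant of the first single-box position (originally $i_d$) is replaced by $i_{d-1}$ or $j_{d-1}$, both strictly greater. Combined with the elementary fact that replacing one element of a sorted tuple by a strictly larger one and re-sorting produces a coordinate-wise weakly larger sorted tuple, induction on the number of Garnir steps yields the claimed inequality for every standard $\widehat{T}_l$ appearing in the expansion.

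The main obstacle is the bookkeeping around the boundary step: after the swap $(j_{d-1}\, i_d)$, column $d-1-k$ of the new tableau has its entries $(i_{d-1}, i_d)$ in the wrong order, and restoring normal form (with a sign change) can introduce new top-row descents and trigger further Garnir steps. Verifying that throughout the resulting cascade the single-box multiset is only ever modified by replacing one of its elements with a strictly larger one is where one has to be careful, but it reduces to a step-by-step check once the behaviour of a single boundary step is understood.
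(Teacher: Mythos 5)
Your proposal is correct and takes essentially the same route as the paper: you straighten $f_{\widehat{T}}$ by the two types of two-row Garnir relations, where the ``internal'' steps leave the single-box entries untouched and the ``boundary'' step $f_{\widehat{T}} = f_{(i_{d-1}\,i_d)\widehat{T}} + f_{(j_{d-1}\,i_d)\widehat{T}}$ replaces the smallest single entry $i_d$ by the strictly larger $i_{d-1}$ or $j_{d-1}$, which is exactly how the paper deduces both termination and the coordinate-wise inequality. The only differences are cosmetic: you factor out $\prod_{l=1}^{k}(x_l-x_{j_l})$ explicitly rather than carrying the first $k$ columns along, and your termination measure (``lexicographic invariant on the top row'') is stated loosely, but this matches the paper's own level of detail, which likewise defers the internal straightening to the standard Specht-module argument.
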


\begin{proof}
It suffices to apply the standard  argument of the Specht module theory (see, for example, \cite[\S 2.6]{Sa}) to the tableaux of the form $\widehat{T}$. For the reader's convenience, we will sketch the outline.  

Note that the tableau  $T$  in \eqref{T} belongs to $Y$ if and only if $j_{k+1} < j_{k+2} < \cdots < j_{d-1}$ and $i_{d-1} < i_d$.  
First, assume that $j_{l-1} > j_l$ for some $k+2 \le   l \le d-1$, and $l$ is the minimal number with this property 
(if there is no such $l$, let us move directly to the operation in the next paragraph).  Note that $i_{l-1} < i_l < j_l < j_{l-1}$ now. Consider the following  two tableaux
$$
T_a=
\begin{ytableau}
\cdots & i_{l-1} & i_l &  \cdots & \cdots \\
\cdots & j_l & j_{l-1} &  \cdots 
\end{ytableau} 
\qquad \text{and} \qquad 
T_b=
\begin{ytableau}
\cdots & i_{l-1} & j_l &  \cdots & \cdots \\
\cdots & i_l & j_{l-1} &  \cdots 
\end{ytableau}.
$$
More precisely, we have to apply a suitable column permutation  to $T_b$ so that the  first row is increasing from left to right.  Except the three slots where $j_{l-1}$, $i_l$ and  $j_l$ are in, $T_a$ and $T_b$ are same as $T$ (modulo the column permutation stated above).  Since $f_T = f_{T_a} -f_{T_b}$, we replace $f_T$ by $f_{T_a}-f_{T_b}$. If $T_a \not \in Y$ or $T_b \not \in Y$, we apply the above operation to them. Repeating this procedure, we can reduce to the case where 
$j_{k+1} < j_{k+2} < \cdots < j_{d-1}$ in \eqref{T}.

In the above situation, $i_{d-1} < i_d$ implies $T \in Y$. So we assume that $i_{d-1} > i_d$. Note that $i_d < i_{d-1} < j_{d-1}$ now. Consider the following  two tableaux
$$
T_c=
\begin{ytableau}
\cdots & i_d & i_{d-1} &  \cdots\\
\cdots & j_{d-1} 
\end{ytableau}
\qquad \text{and} \qquad 
T_d=
\begin{ytableau}
\cdots & i_d & j_{d-1} &  \cdots\\
\cdots & i_{d-1} 
\end{ytableau}
$$
(more precisely, we have to apply a suitable column permutation to each tableau so that the  first row is increasing from left to right). Since $f_T = f_{T_c} -f_{T_d}$, we replace $f_T$ by $f_{T_c}-f_{T_d}$. 
However, the above  column permutations of $T_c$ and $T_d$ might violate the inequalities $j_{k+1} < j_{k+2} < \cdots < j_{d-1}$. If this is the case,  we apply (and repeat, if necessary) ``$T_a$ and $T_b$ operations". After that, we go back to ``$T_c$ and $T_d$ operations". 
Repeating this procedure, we can get the expected representation $f_T =  \sum_{l=1}^m c_l f_{T_l}$. 

The last assertion of the lemma is clear, since $i_d < i_{d-1}$ in $T_c$ and  $i_d < j_{d-1}$ in $T_d$ (this fact also guarantees the termination of the above procedure). 
\end{proof}

\noindent{\it The proof of Theorem~\ref{ISp_{(n-k,k)} is radical}.}
Since $\ISp_\mu$ is a radical ideal by induction hypothesis, we can use Corollary~\ref{reduction2},  and it suffices to show the statement $(*\!*\!*)$. 
For a given  $\phi = x^\ba(\sum_{T \in X} c_T f_T)  \in I_{\<d\>}$, we apply the following algorithm.

\medskip

\noindent{\bf Operation 1.}  Using Lemma~\ref{quasi h-standard}, we re-write $\phi$ as $\phi= x^\ba(\sum_{T \in Y} c_T' f_T) $.

\medskip

Note that $$\sum_{T \in Y} c_T' f_{\widetilde{T}} = \sum_{T \in Y} c_T' h_T= 0$$
by Lemma~\ref{relation of h_T}. 
If $c'_T \ne 0$ implies $T \in Z$ (equivalently, $\phi= x^\ba(\sum_{T \in Z} c_T' f_T) $), then we have $\sum_{T \in Z} c_T' f_{\widetilde{T}} = 0$. 
Since $\{ f_{\widetilde{T}} \mid T \in Z \}$ is linearly independent as is well-known in the Specht module theory 
(see, for example, \cite[Theorem~2.5.2]{Sa}), we have $c'_T =0$ for all $T \in Z$ and hence $\phi=0 \in \frJ$.

If $c_T' \ne 0$ for some $T \in Y \setminus Z$, we go to the next operation.

\medskip

\noindent{\bf Operation 2.}  For the tableau $T$  of  the form \eqref{T}, we have a permutation $\sigma_T$ on $\{i_d, \ldots, i_{n-d}, j_1, \ldots, j_k \}$ such that 
$$\sigma_T(i_d) < \sigma_T(i_{d+1})  < \cdots < \sigma_T(i_{n-d}) < \sigma_T(j_1) <  \sigma_T(j_2) < \cdots < \sigma_T(j_k).$$
(Note that $T \in Z$ if and only if  $\sigma_T ={\rm Id}$.) 
Set 
$$\phi' =\sum_{T \in Y} c_T' f_{\sigma_T T}.$$

\bigskip

Note that 
$$\phi -\phi' = \sum_{T \in Y}c_T'(f_T - f_{\sigma_T T} ) \in \frJ.$$
by Lemma~\ref{permutation}. Hence it suffices to show that $\phi' \in \frJ$, and we replace $\phi$ by $\phi'$. 
However, $\sigma_T T \not \in Y$ (i.e., $i_{d-1} > i_d$) might happen, so we will go back to Operation~1, and then move to  Operation~2. We repeat this procedure until we get a form  $\phi= x^\ba(\sum_{T \in Z} c_T' f_T)$. 
 Operation~1 does not change the sequence $(j_1, \ldots, j_k)$, and Operation~2 raises it with respect to the coordinate-wise order.   It means that this algorithm eventually stops, and we get an expected expression  $\phi= x^\ba(\sum_{T \in Z} c_T' f_T)$. 
Then  $\phi=0$, as we have shown above. It means that the original $\phi$ can be reduced to 0 modulo $\frJ$, that is,  the original $\phi$ belongs $\frJ$. 
So  the condition  $(*\!*\!*)$ holds.  
\qed

\medskip

Combining Theorem~\ref{ISp_{(n-k,k)} is radical} with Theorem~\ref{EGL}, we have the following. 

\begin{cor}\label{ISp_{(n-k,k)} is CM}
If $\chara(K)=0$, the ring $R/\ISp_{(n-d,d)}$ is Cohen-Macaulay. 
\end{cor}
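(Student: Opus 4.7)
The plan is to combine the two main ingredients developed in the paper so far: the radicalness statement (Theorem~\ref{ISp_{(n-k,k)} is radical}) and the Cohen--Macaulayness of $R/I_{n,k}$ due to Etingof--Gorsky--Losev (Theorem~\ref{EGL}).

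First I would observe that $\lambda = (n-d, d)$ has length $l = 2$, so the condition $\lambda_{l-1} = \lambda_1$ of Proposition~\ref{minimal primes} is automatic. That proposition then yields
$$\sqrt{\ISp_{(n-d,d)}} = I_{n,\,\lambda_1+1} = I_{n,\,n-d+1}.$$
Combining this with Theorem~\ref{ISp_{(n-k,k)} is radical}, which asserts that $\ISp_{(n-d,d)}$ is already radical, one obtains the honest equality $\ISp_{(n-d,d)} = I_{n,\,n-d+1}$.

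Next I would verify the numerical hypothesis of Theorem~\ref{EGL} with $k = n-d+1$. Since $(n-d, d)$ is a partition one has $n-d \ge d$, i.e.\ $n \ge 2d$, and therefore
$$2k = 2(n-d+1) = 2n - 2d + 2 \ge n + 2 > n$$
(the boundary case $d=1$ also fits, giving $k = n$ and $2k = 2n > n$). Assuming $\chara(K)=0$, Theorem~\ref{EGL} then yields that $R/I_{n,\,n-d+1}$ is Cohen--Macaulay, and via the identification above the same holds for $R/\ISp_{(n-d,d)}$.

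The main obstacle has in fact already been overcome: it was the radicalness Theorem~\ref{ISp_{(n-k,k)} is radical}, together with the characteristic-zero Cohen--Macaulay result of \cite{EGL}. Once both are in hand, the present corollary reduces to the short bookkeeping above---namely, invoking Proposition~\ref{minimal primes} to identify $\ISp_{(n-d,d)}$ with $I_{n,\,n-d+1}$ and checking the inequality $2(n-d+1) > n$ demanded by Theorem~\ref{EGL}.
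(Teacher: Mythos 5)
Your proposal is correct and follows exactly the paper's route: the paper's proof of this corollary is literally "combine Theorem~\ref{ISp_{(n-k,k)} is radical} with Theorem~\ref{EGL}," with the identification $\ISp_{(n-d,d)}=I_{n,n-d+1}$ via Proposition~\ref{minimal primes} and the inequality $2(n-d+1)>n$ left implicit. You have simply made that bookkeeping explicit, and it checks out.
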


For $n \in \NN$, the $n$-th {\it Catalan number} $C_n$ is given by 
$$C_n = \frac{1}{2n+1}\binom{2n+1}{n},$$  
and $\{C_n\}_{n \in \NN}$ is one of the most important combinatorial sequences.   
Exercise A8 of the monograph \cite{St} gives 
17 algebraic interpretations of Catalan numbers  (e.g., in terms of the ring of upper triangular matrices, $SL(2,\CC)$, the toric variety associated with the $n$-dimensional cube, $\ldots$) beside more than 200 purely combinatorial interpretations. 
It is well-known that the number of standard tableaux of shape $(n,n)$ (or equivalently, of shape $(n,n-1)$) is $C_n$. See, for example, \cite[Exercise 168]{St} (this fact is counted as a combinatorial interpretation in this monograph). 
Theorem~\ref{ISp_{(n-k,k)} is radical}  gives yet another algebraic interpretation of the Catalan numbers. 

\begin{cor}
In the polynomial ring $K[x_1, \ldots, x_{2n}]$, the number of minimal generators of the ideal
$$I_{2n, n+1}= \bigcap_{\substack{F \subset [2n] \\ \# F = n+1}} (x_i-x_j \mid i, j \in F)$$ 
is the $n$-th Catalan number $C_n$.  Similarly, the ideal $I_{2n-1, n+1} \subset K[x_1, \ldots, x_{2n-1}]$ 
is also generated by $C_n$ elements.  
\end{cor}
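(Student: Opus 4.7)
My approach is to recognize each of $I_{2n, n+1}$ and $I_{2n-1, n+1}$ as a two-row Specht ideal and then count its minimal generators via the standard-tableaux basis of the Specht module.

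First I would verify the identifications. For $I_{2n, n+1}$, take $\lambda = (n, n)$, a partition of $2n$. Since $\lambda_{l-1} = \lambda_1$, Proposition~\ref{minimal primes} gives $\sqrt{\ISp_{(n,n)}} = I_{2n, n+1}$, and Theorem~\ref{ISp_{(n-k,k)} is radical} says $\ISp_{(n,n)}$ is already radical, so $\ISp_{(n,n)} = I_{2n, n+1}$. For $I_{2n-1, n+1}$, take $\lambda = (n, n-1)$, a partition of $2n-1$; here $l = 2$, so the hypothesis $\lambda_{l-1} = \lambda_1$ is automatic, and the same two results yield $\ISp_{(n, n-1)} = I_{2n-1, n+1}$.

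Next I would count minimal generators. All Specht polynomials of a fixed shape $\lambda$ share the common degree $\sum_j \binom{c_j}{2}$, where $c_j$ is the length of the $j$-th column---namely $n$ for shape $(n,n)$ and $n-1$ for shape $(n, n-1)$. Hence each $\ISp_\lambda$ in question is generated in a single degree $d$; in particular $(\fm\, \ISp_\lambda)_d = 0$ and $(\ISp_\lambda)_d = U_\lambda$, forcing
$$\mu(\ISp_\lambda) \;=\; \dim_K U_\lambda.$$
Since $\{f_T \mid T \in \StTab(\lambda)\}$ is a basis of $U_\lambda$ (standard Specht module theory; cf.~\cite[\S 2.5]{Sa}), this dimension equals $|\StTab(\lambda)|$, which is $C_n$ for both shapes --- either by the hook-length formula or via the bijection that removes $2n$ from the bottom-right corner of a standard $(n,n)$-tableau on $[2n]$ to produce a standard $(n, n-1)$-tableau on $[2n-1]$.

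There is no serious obstacle here; once Theorem~\ref{ISp_{(n-k,k)} is radical} is in hand, the argument is very short. The only point requiring a moment's attention is observing that the condition $\lambda_{l-1} = \lambda_1$ of Proposition~\ref{minimal primes} holds vacuously when $l = 2$, which is what lets the $\lambda = (n, n-1)$ case go through and gives the second assertion of the corollary.
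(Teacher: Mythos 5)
Your proposal is correct and follows essentially the same route as the paper: identify $I_{2n,n+1}=\ISp_{(n,n)}$ and $I_{2n-1,n+1}=\ISp_{(n,n-1)}$ via Proposition~\ref{minimal primes} together with Theorem~\ref{ISp_{(n-k,k)} is radical}, and then count minimal generators by the number of standard tableaux, which is $C_n$. Your write-up merely makes explicit the step the paper leaves implicit, namely that single-degree generation forces $\mu(\ISp_\lambda)=\dim_K U_\lambda=\#\StTab(\lambda)$.
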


\begin{proof}
Since $I_{2n, n+1} =\ISp_{(n,n)}$ and  $I_{2n-1, n+1} =\ISp_{(n,n-1)}$, the assertion follows from the above mentioned characterization of the Catalan numbers. 
\end{proof}

\section{The radicalness of $\ISp_{(a,a,1)}$}
In this section, we assume that  $n= 2a+1$ for some $a \in \NN$, and set $\lambda=(a,a,1)$ and $\mu =(a,a)$. 
For a Young tableau
\begin{equation}\label{(a,a,1)}
T = 
\ytableausetup
{mathmode, boxsize=2em}
\begin{ytableau}
i_1 & i_2  & \cdots &i_a  \\
j_1 & j_2 & \cdots & j_a
\end{ytableau}
\end{equation}
of shape $\mu$, set 
$$P(T):=\{  \, (i_k, j_k ) \mid 1\le k \le a  \, \}. $$

\begin{lem}\label{varphi2}
Recall that $S=K[x_1, \ldots, x_{n-1}]$, and $\varphi: R \to S$ is the natural surjection.  
With the above notation, we have  
 $$\varphi(\ISp_\lambda )  = ( \, x_i x_j f_T \mid T \in \Tab(\mu), (i, j)  \in P(T) \, ) \subset S.$$
\end{lem}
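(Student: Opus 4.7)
The proof plan is to mimic the structure of Lemma~\ref{varphi}, adapting the construction to the shape $\lambda=(a,a,1)$, whose first column now has three boxes. As before, $\ISp_\lambda$ is generated by the Specht polynomials of standard tableaux, and following the convention in the proof of Lemma~\ref{varphi}, I would work with standard tableaux with respect to the inverse order $n\prec n-1\prec\cdots\prec 1$. For the shape $(a,a,1)$ such a standard tableau necessarily has $n$ in the top-left cell, so it has the form
$$
T' \;=\;
\ytableausetup{mathmode, boxsize=2em}
\begin{ytableau}
n & i_1 & \cdots & i_{a-1} \\
j_0 & j_1 & \cdots & j_{a-1} \\
k_0
\end{ytableau}.
$$

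For the inclusion $\varphi(\ISp_\lambda)\subset\frJ$, I would compute the Specht polynomial of $T'$ column by column: the first column contributes $(x_n-x_{j_0})(x_n-x_{k_0})(x_{j_0}-x_{k_0})$, and the remaining columns contribute $\prod_{k=1}^{a-1}(x_{i_k}-x_{j_k})$. Applying $\varphi$ (which sets $x_n=0$) yields
$$
\varphi(f_{T'}) \;=\; x_{j_0}x_{k_0}\,(x_{j_0}-x_{k_0})\prod_{k=1}^{a-1}(x_{i_k}-x_{j_k}) \;=\; x_{j_0}x_{k_0}\,f_T,
$$
where
$$
T \;=\;
\begin{ytableau}
j_0 & i_1 & \cdots & i_{a-1} \\
k_0 & j_1 & \cdots & j_{a-1}
\end{ytableau}
$$
is a tableau of shape $\mu=(a,a)$ with $(j_0,k_0)\in P(T)$. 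Since such $T'$ generate $\ISp_\lambda$, this shows $\varphi(\ISp_\lambda)\subset\frJ$.

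For the reverse inclusion $\frJ\subset\varphi(\ISp_\lambda)$, I would start from an arbitrary $T\in\Tab(\mu)$ and a pair $(i,j)\in P(T)$, say $(i,j)=(i_m,j_m)$ is the $m$-th column of $T$. I would then build a tableau $T'$ of shape $\lambda$ by taking $(n,i_m,j_m)$ as the first column and placing the remaining $a-1$ columns of $T$ as the other columns. A direct computation, identical in spirit to the one above, gives $\varphi(f_{T'})=x_ix_j\,f_T$, hence $x_ix_j f_T\in\varphi(\ISp_\lambda)$. Combined, the two inclusions yield the claimed equality.

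There is no real obstacle here; the argument is a faithful transcription of the proof of Lemma~\ref{varphi} with the single modification that the top column, now of length three, contributes two linear factors involving $x_n$ (rather than one), producing the extra $x_ix_j$ factor that explains the shape of the generators of $\frJ$. Care is only required in the bookkeeping of the column-permutation equivalence on $\Tab(\mu)$ and the identification of $(i,j)\in P(T)$ as an unordered pair, which matches the symmetry of $x_ix_j$.
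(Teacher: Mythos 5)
Your proposal is correct and follows essentially the same route as the paper: it uses the generation of $\ISp_\lambda$ by standard tableaux with respect to the inverse order $n\prec n-1\prec\cdots\prec 1$ (so that $n$ sits in the top-left box), computes $\varphi(f_{T'})=x_ix_jf_T$ with $(i,j)\in P(T)$ for such tableaux, and obtains the reverse inclusion by reassembling a shape-$(a,a,1)$ tableau from $T$ and a chosen column $(i,j)$, which is exactly the paper's ``easy'' converse made explicit. No gaps.
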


For notational simplicity,  we set 
$$\frJ := ( \, x_i x_j f_T \mid T \in \Tab(\mu), (i, j)  \in P(T) \, ).$$

\begin{proof}
The proof is parallel to that of Lemma~\ref{varphi}. 
Note that 
$$\ISp_\lambda =  ( \, f_{T'} \mid  T' \in \StTab(\lambda) \, ).$$
Here we use the inverse order $n \prec n-1 \prec \cdots  \prec 1 $, and hence a  standard tableau on $\lambda$ is of the form 
$$
T' = 
\ytableausetup
{mathmode, boxsize=2em}
\begin{ytableau}
n & i_2  & i_3 & \cdots &i_a \\
i_1 & j_2 & j_3 & \cdots & j_a   \\
j_1
\end{ytableau}
$$
and we have $$\varphi(f_{T'}) = x_{i_1}x_{j_1} \prod_{k=1}^{a} (x_{i_k}-x_{j_k})= x_{i_1}x_{j_1}f_T,$$
where $T$ is the tableau in \eqref{(a,a,1)}.  
It is easy to see that  $\varphi(f_{T'}) \in \frJ$, and hence $\varphi(\ISp_\lambda) \subset \frJ$. 

The converse inclusion is easy.   
\end{proof}

\begin{thm}\label{ISp_{(a,a,1)} is radical}
The Specht ideal $\ISp_{(a,a,1)}$ is radical.
\end{thm}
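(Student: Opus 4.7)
The plan is to mirror the three-step strategy of the proof of Theorem~\ref{ISp_{(n-k,k)} is radical}, using that theorem applied to $\mu = (a,a)$ (the $(n-d,d)$-case with $n = 2d$) as the key input. Since $x_n$ is a non-zero divisor on $R/\ISp_\lambda$, Lemma~\ref{radical reduction} reduces the problem to showing that $\frJ = \varphi(\ISp_\lambda)$, as described in Lemma~\ref{varphi2}, is radical in $S = K[x_1, \ldots, x_{n-1}]$. Because $\ISp_\mu$ is radical by Theorem~\ref{ISp_{(n-k,k)} is radical} and $\lambda_{l-1} = \lambda_1 = a$, Lemma~\ref{radical of frJ} yields $\sqrt{\frJ} = \ISp_\mu \cap I_{\<a+1\>}$, so it suffices to prove the inclusion $\ISp_\mu \cap I_{\<a+1\>} \subset \frJ$.

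To establish this inclusion, I would take $\phi \in \ISp_\mu \cap I_{\<a+1\>}$ and apply the monomial-reduction argument of Lemma~\ref{reduction} verbatim, cutting the problem down to showing that whenever $\phi = x^\ba \sum_T c_T f_T \in I_{\<a+1\>}$ for a squarefree monomial $x^\ba = x_1 \cdots x_k$ and scalars $c_T \in K$, the sum lies in $\frJ$. The analog of Lemma~\ref{support} is immediate in this setting: if $x_i x_j \mid x^\ba$ for some column $(i,j) \in P(T)$, then $x^\ba f_T \in \frJ$ directly from the definition of $\frJ$. So one may restrict attention to tableaux $T$ with $x^\ba \in \supp(f_T)$, i.e., those for which $\supp(x^\ba)$ meets each column of $T$ in at most one entry. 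For such $T$, I would exploit the Garnir-type identity
$$(x_{i_1} - x_{j_1})(x_{i_2} - x_{j_2}) = (x_{i_1} - x_{j_2})(x_{i_2} - x_{j_1}) + (x_{i_1} - x_{i_2})(x_{j_1} - x_{j_2})$$
applied to two columns $(i_1, j_1), (i_2, j_2)$ of $T$ with $i_1, i_2 \in \supp(x^\ba)$. This yields $f_T = f_{T'} + f_{T''}$, where $T'$ swaps $j_1, j_2$ in $T$ and $T''$ is the crossed tableau with $(i_1, i_2)$ as a column; since the column $(i_1, i_2)$ of $T''$ lies in $\supp(x^\ba)$, we get $x^\ba f_{T''} \in \frJ$ and hence $x^\ba f_T \equiv x^\ba f_{T'} \pmod{\frJ}$. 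Iterating this operation, the partners of $\{1, \ldots, k\}$ in $T$ can be rearranged modulo $\frJ$ into a prescribed canonical order.

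After this rearrangement, $\phi$ reduces modulo $\frJ$ to an expression of the form $x^\ba \sum_{T \in Z} c'_T f_T$ indexed by a canonical set $Z$ in bijection with a subfamily of standard tableaux of shape $\mu$. As in the final stage of the proof of Theorem~\ref{ISp_{(n-k,k)} is radical}, the hypothesis $\phi \in I_{\<a+1\>}$ will force a linear relation among the $c'_T$ analogous to Lemma~\ref{relation of h_T}, and the linear independence of $\{f_T : T \in \StTab(\mu)\}$ will then force all the $c'_T$ to vanish, yielding $\phi \in \frJ$. The main obstacle I anticipate is the bookkeeping in this last step: unlike the shape $(n-d,d)$ with $n > 2d$, the shape $(a,a)$ has no ``extra'' entries outside its columns, so the permutation action used in Lemma~\ref{permutation} is absent and the correct canonical $Z$ must be identified purely through the interaction of $\supp(x^\ba)$ with the column-pair structure. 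Once $Z$ is correctly chosen, the termination of the iterative Garnir reduction and the concluding linear-independence step should proceed in close analogy with Lemmas~\ref{quasi h-standard}--\ref{permutation}.
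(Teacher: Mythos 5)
Your reduction steps coincide with the paper's: $x_n$ is a non-zero divisor, Lemma~\ref{radical reduction} and Lemma~\ref{radical of frJ} (with Theorem~\ref{ISp_{(n-k,k)} is radical} applied to $\mu=(a,a)$) reduce the claim to $\ISp_\mu \cap I_{\<a+1\>} \subset \frJ$, the monomial reduction of Lemma~\ref{reduction} carries over, and it is indeed immediate that $x^\ba f_T \in \frJ$ whenever some column of $T$ lies inside $\supp(x^\ba)$. The genuine gap is in your final step, which is exactly where the new content of this theorem sits. The relation forced by $\phi \in I_{\<a+1\>}$ is $\sum_T c'_T h_T = 0$, where $h_T=\prod_{l>k}(x_{i_l}-x_{j_l})$ is the product over the columns \emph{not} meeting $\{1,\dots,k\}$ (the analogue of Lemma~\ref{relation of h_T}); so what has to be proved is linear independence of the $h_T$ for $T$ in your canonical set $Z$, not linear independence of $\{f_T : T\in \StTab(\mu)\}$, which is the statement you invoke. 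And this independence is not mere bookkeeping: already for $a=2$, $k=1$ the three possible truncated products satisfy $(x_2-x_3)-(x_2-x_4)+(x_3-x_4)=0$, while your Garnir swap of partners requires two marked columns and hence does nothing at all when $k=1$. So sorting the partners of $1,\dots,k$ modulo $\frJ$ cannot by itself produce an independent family; the canonical $Z$ must be chosen more carefully.

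The paper's resolution is in fact simpler than the iterative scheme you anticipate: one straightens $\sum_T c_T f_T$ into the standard basis of $\StTab(\mu)$ once; the standard $T$ in which two marked letters share a column are already in $\frJ$; and for each remaining $T$ (the set $W$ of \eqref{(a,a)}) one forms the tableau $\overline{T}$ of shape $(a,a-k)$ with first row $j_a,\dots,j_1$ and second row $i_a,\dots,i_{k+1}$, so that $h_T=\pm f_{\overline{T}}$, and checks that $\overline{T}$ is standard with respect to the reversed order on the letter set $\{k+1,\dots,2a\}$. Linear independence of standard Specht polynomials of shape $(a,a-k)$ then forces all $c'_T=0$; no analogue of Lemma~\ref{permutation} or of the two-operation algorithm is needed. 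Your plan could likely be completed, but only after supplying exactly this (or an equivalent) identification of $Z$ with standard tableaux of a smaller shape together with the independence proof, which is the step your proposal leaves open.
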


\begin{proof}
This can be proved by an argument similar to the previous section. Let $I_{\<a+1\>} \subset S$ be  the ideal 
generated by all squarefree  monomials of degree $a+1$. 
 With the above notation, we have  
$$\sqrt{\varphi(\ISp_\lambda)} = \sqrt{\ISp_\mu} \cap I_{\<a+1 \>}=\ISp_\mu \cap I_{\<a+1 \>},$$
where the first (resp. second) equality follows from Lemma~\ref{radical of frJ} (resp. Theorem~\ref{ISp_{(n-k,k)} is radical}). 
By Lemma~\ref{radical reduction}, it suffices to show that $\frJ \, (= \varphi(\ISp_\lambda))$ is a radical ideal, equivalently, $\frJ  \supset \ISp_\mu \cap I_{\<a+1 \>}$.  

By an argument similar to Lemma~\ref{reduction}, it suffices to show that 
\begin{itemize}
\item[$(\star)$] If  $\psi = x^\ba(\sum_{T \in \Tab(\mu)} c_T f_T)  \in I_{\< a+1\>}$ for some squareferee monomial $x^\ba \in S=K[x_1, \ldots, x_{n-1}]$ and  $c_T \in K$, 
we have $\psi \in   \frJ$.  
\end{itemize} 
By the symmetry, we may assume that $x^\ba =x_1x_2\cdots x_k$. We can rewrite $\psi$ as 
$\psi = x^\ba(\sum_{T \in \StTab(\mu)} c_T' f_T)$. Moreover, we can replace $\psi$ by 
$$
\psi': = x^\ba(\sum_{T \in W} c_T' f_T), 
$$
where $W$ is the subset of $\StTab(\mu)$ consisting of $T$ of the form  
\begin{equation}\label{(a,a)}
T= \ytableausetup
{mathmode, boxsize=2em}
\begin{ytableau}
 1 & 2  & \cdots &k & i_{k+1} & \cdots &i_a \\
j_1& j_2  & \cdots &j_k & j_{k+1} & \cdots &j_a \\
\end{ytableau}.
\end{equation}
In fact, if $T \in \StTab(\mu) -W$, then it is clear that $x^\ba f_T \in \frJ$. 

For the tableau $T$ in \eqref{(a,a)}, set 
$$h_T := \prod_{l=k+1}^a(x_{i_l}-x_{j_l}),$$ 
and consider the tableau 
$$\overline{T}:=\begin{ytableau}
 j_a & j_{a-1}  & \cdots &j_{k+1} & j_k &  j_{k-1} & \cdots & j_1 \\
i_a & i_{a-1}  & \cdots & i_{k+1} 
\end{ytableau} 
$$
of shape $(a, a-k)$. Clearly, $h_T = (-1)^k f_{\overline{T}}$. Since $T$ is standard,  $\overline{T}$ is standard with 
respect to the inverse order $n \prec n-1 \prec \cdots  \prec k+1 \prec k$ and the letter set $\{k+1, \ldots, n-1, n \}$.  
Hence $\{ h_T \mid T \in W\}$ is linearly independent.

On the other hand, by an argument similar to Lemma~\ref{relation of h_T}, we have 
 $$\sum_{T \in W} c_T' h_T =0.$$ 
It implies  that $c_T'=0$ for all $T \in W$, that is, $\psi'=0$.  
It means that  $\psi \in \frJ$, and we get the expected statement $(\star)$. 
\end{proof}

Combining Theorem~\ref{ISp_{(a,a,1)} is radical} with Theorem~\ref{EGL}, we have the following. 

\begin{cor}\label{ISp_{(a,a,1)} is CM}
If $\chara(K)=0$, the ring $R/\ISp_{(a,a,1)}$ is Cohen-Macaulay. 
\end{cor}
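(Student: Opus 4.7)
The plan is to reduce the statement to Theorem~\ref{EGL} via the just-proved radicalness. All the heavy lifting has already been done in Theorem~\ref{ISp_{(a,a,1)} is radical} and Theorem~\ref{EGL}, so what remains is bookkeeping: identify $\ISp_{(a,a,1)}$ with an ideal of the form $I_{n,k}$ and verify the numerical hypothesis of Etingof--Gorsky--Losev.

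First I would invoke Proposition~\ref{minimal primes}. For $\lambda=(a,a,1)$ we have $l=3$, $\lambda_{l-1}=\lambda_2=a=\lambda_1$, so the partition satisfies the hypothesis $\lambda_{l-1}=\lambda_1$ of that proposition. Consequently
\[
\sqrt{\ISp_{(a,a,1)}}=I_{n,\,\lambda_1+1}=I_{2a+1,\,a+1},
\]
where we have used $n=2a+1$ throughout this section.

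Next I would apply Theorem~\ref{ISp_{(a,a,1)} is radical}, which asserts that $\ISp_{(a,a,1)}$ is already equal to its radical. Combining this with the previous display gives the identification
\[
\ISp_{(a,a,1)}=I_{2a+1,\,a+1}.
\]

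Finally I would check the Cohen--Macaulay criterion in Theorem~\ref{EGL}. Setting $n=2a+1$ and $k=a+1$, one computes $2k=2a+2>2a+1=n$, so the hypothesis $2k>n$ of Theorem~\ref{EGL} is verified. Therefore, under the assumption $\chara(K)=0$, the ring $R/I_{2a+1,a+1}=R/\ISp_{(a,a,1)}$ is Cohen--Macaulay, as claimed. There is no real obstacle here — the substantive work is already packaged into the two cited results, and this corollary is essentially a three-line combination of them, exactly parallel to Corollary~\ref{ISp_{(n-k,k)} is CM}.
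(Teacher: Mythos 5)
Your proposal is correct and is exactly the paper's argument: the corollary is stated there as the combination of Theorem~\ref{ISp_{(a,a,1)} is radical} with Theorem~\ref{EGL}, using $\sqrt{\ISp_{(a,a,1)}}=I_{2a+1,a+1}$ from Proposition~\ref{minimal primes} and the verification $2(a+1)>2a+1$. No gaps.
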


By \cite{WY}, Corollaries~\ref{ISp_{(n-k,k)} is CM} and \ref{ISp_{(a,a,1)} is CM}, we have the following. 

\begin{cor}\label{char(K)=0 main}
Assume that $\chara(K)=0$. 
Then $R/\ISp_\lambda$ is Cohen--Macaulay if and only if $\lambda$ is one of the following form. 
\begin{itemize}
\item[(1)] $\lambda =(n-d, 1, \ldots, 1)$, 
\item[(2)] $\lambda =(n-d,d)$, 
\item[(3)] $\lambda =(a,a,1)$. 
\end{itemize}
\end{cor}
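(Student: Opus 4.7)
The statement is essentially an assembling of results already proved in the paper, so my plan is to verify that the three cases listed in Proposition~\ref{CM ISP} match those in the corollary, then invoke the appropriate sufficiency result for each.

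For the necessity direction, I would invoke Proposition~\ref{CM ISP}: if $R/\ISp_\lambda$ is Cohen--Macaulay, then $\lambda = (\lambda_1,\ldots,\lambda_l)$ falls into one of the three types (i) $\lambda_2 = 1$, (ii) $l=2$, or (iii) $l=3$ with $\lambda_1 = \lambda_2$ and $\lambda_3 = 1$. I would just translate these: case (i) is $\lambda = (n-d,1,\ldots,1)$ with $d = l-1$, case (ii) is $\lambda = (n-d,d)$ with $d = \lambda_2$, and case (iii) is $\lambda = (a,a,1)$ with $a = \lambda_1$. No characteristic assumption is needed here.

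For the sufficiency direction, the three cases are handled independently:
\begin{itemize}
\item[(1)] If $\lambda = (n-d,1,\ldots,1)$, then $\ISp_\lambda$ is the ideal of maximal minors of a Vandermonde-type matrix, and \cite{WY} shows $R/\ISp_\lambda$ is Cohen--Macaulay over an arbitrary field (so the assumption $\chara(K)=0$ is not needed for this case).
\item[(2)] If $\lambda = (n-d,d)$, apply Corollary~\ref{ISp_{(n-k,k)} is CM} directly.
\item[(3)] If $\lambda = (a,a,1)$, apply Corollary~\ref{ISp_{(a,a,1)} is CM} directly.
\end{itemize}

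There is no real obstacle: all the work has already been done. Corollaries~\ref{ISp_{(n-k,k)} is CM} and \ref{ISp_{(a,a,1)} is CM} were established precisely by combining our radicalness theorems (Theorems~\ref{ISp_{(n-k,k)} is radical} and \ref{ISp_{(a,a,1)} is radical}, valid in arbitrary characteristic) with the Etingof--Gorsky--Losev result (Theorem~\ref{EGL}), which does require $\chara(K) = 0$ and gives the Cohen--Macaulayness of $R/I_{n,k}$ when $2k > n$; note that in both cases (2) and (3) we have $\lambda_1 + 1 > n/2$, so Theorem~\ref{EGL} does apply. Thus the proof is a one-line assembly in each direction, and I would simply refer to the preceding results rather than reprove anything.
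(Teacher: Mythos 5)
Your proposal is correct and is essentially identical to the paper's own argument: the paper also obtains the corollary by combining Proposition~\ref{CM ISP} (necessity) with \cite{WY}, Corollary~\ref{ISp_{(n-k,k)} is CM}, and Corollary~\ref{ISp_{(a,a,1)} is CM} (sufficiency). Your extra check that $2(\lambda_1+1)>n$ in cases (2) and (3), so that Theorem~\ref{EGL} applies, is a harmless reverification of what is already built into those corollaries.
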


\section{Characteristic free approach to the low dimensional cases}

\begin{prop}\label{I_{n,n-1} is CM} 
For any $K$, $R/\ISp_{(n-2,2)}$ is a Cohen-Macaulay ring with the Hilbert series 
$$\Hilb(R/\ISp_{(n-2,2)},t)=\frac{1+(n-2)t+t^2}{(1-t)^2}.$$
\end{prop}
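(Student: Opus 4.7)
My plan is to exploit the regular element $x_n$ on $A := R/\ISp_{(n-2,2)}$ (noted in the paragraph before Lemma~\ref{radical of frJ}) and reduce to a one-dimensional problem. Since $x_n$ is $A$-regular, it suffices to prove that $\bar A := A/(x_n) = S/\frJ$ is Cohen--Macaulay of dimension $1$ with Hilbert series $(1+(n-2)t+t^2)/(1-t)$; dividing by $1-t$ then produces both the claimed Hilbert series for $A$ and the inequality $\depth A \geq 2 = \dim A$. By Lemma~\ref{varphi} applied to $\lambda = (n-2,2)$ and $\mu = (n-2,1)$, whose Specht polynomials are simply $f_T = x_a - x_b$ (for the unique height-two column), the ideal $\frJ := \varphi(\ISp_{(n-2,2)}) \subset S = K[x_1,\ldots,x_{n-1}]$ is generated by the binomials
\[
x_k(x_a - x_b) \qquad (k, a, b \in [n-1] \text{ pairwise distinct}).
\]

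The central computation is to identify $(S/\frJ)_d$ for $d \geq 2$. The generating relations yield the monomial identifications $x^{\gamma + e_k + e_a} \equiv x^{\gamma + e_k + e_b} \pmod{\frJ}$ whenever $k, a, b$ are distinct, which move one unit of exponent from coordinate $a$ to coordinate $b$ provided a third coordinate $k$ also carries positive exponent. Used repeatedly, this reduces every non-pure-power monomial of degree $d$ to a single fixed reference: if $|\supp(\ba)| \geq 3$ any two coordinates can be interchanged via a third as pivot, while if $|\supp(\ba)| = 2$ one first applies a relation with a fresh pivot (available since $n - 1 \geq 3$) to enlarge the support to three, then proceeds as before. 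Writing $w_d \in (S/\frJ)_d$ for this common class, I claim that $\{x_1^d, \ldots, x_{n-1}^d, w_d\}$ is a $K$-basis, so $\dim_K (S/\frJ)_d = n$. Linear independence and the nonvanishing of $w_d$ both follow from the observation that every generator $x^{\gamma + e_k + e_a} - x^{\gamma + e_k + e_b}$ of $\frJ$ is a difference of two non-pure-power monomials with zero augmentation; hence no pure power carries a nonzero coefficient in any element of $\frJ$, and $w_d$ itself has nonzero augmentation. Combined with $\dim(S/\frJ)_0 = 1$ and $\dim(S/\frJ)_1 = n - 1$ (since $\frJ$ is generated in degree two), this gives
\[
H(S/\frJ, t) = 1 + (n-1)t + \frac{n t^2}{1-t} = \frac{1+(n-2)t+t^2}{1-t}.
\]

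To finish, I would verify the Cohen--Macaulay property of $\bar A$ by showing $\soc(\bar A) = 0$, which forces $\fm \notin \Ass(\bar A)$ and hence $\depth \bar A \geq 1 = \dim \bar A$. The multiplication rules $x_j \cdot x_i^d = x_j^{d+1}$ if $i = j$ and $= w_{d+1}$ otherwise, together with $x_j \cdot w_d = w_{d+1}$, yield
\[
x_j \Bigl( \sum_i a_i x_i^d + b\, w_d \Bigr) = a_j x_j^{d+1} + \Bigl( b + \sum_{i \ne j} a_i \Bigr) w_{d+1};
\]
reading off basis coefficients, demanding this to vanish for every $j \in [n-1]$ forces all $a_j = 0$ and then $b = 0$, while the degree $0$ case $f = c \in K$ is immediate since $c x_j \ne 0$ for $c \ne 0$. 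Thus $\bar A$ is Cohen--Macaulay of dimension $1$, and consequently $A$ is Cohen--Macaulay of dimension $2$ with the stated Hilbert series.

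The main obstacle is the collapse claim for non-pure-power monomials modulo $\frJ$. Each single relation involves three distinct variables, so a monomial whose support has exactly two elements cannot be rearranged in place; one must first spend a relation to enlarge the support to three, at which point the free exchange of exponents becomes available. Handling this case distinction cleanly --- while simultaneously confirming via the augmentation observation that $w_d$ is well-defined and nonzero and that the pure powers remain independent --- is the technical heart of the argument.
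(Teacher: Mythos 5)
Your argument is correct, but it takes a genuinely different route from the paper's. The paper also reduces modulo the regular element $x_n$, but then it identifies $\varphi(\ISp_{(n-2,2)})$ with the intersection $\ISp_{(n-2,1)} \cap I_{\<2\>}$ in $S$ --- this step uses the radicalness of $\varphi(\ISp_{(n-d,d)})$ established inside the proof of Theorem~\ref{ISp_{(n-k,k)} is radical} together with Lemma~\ref{radical of frJ} --- and then concludes via the Mayer--Vietoris sequence $0 \to S/\varphi(\ISp_{(n-2,2)}) \to S/\ISp_{(n-2,1)} \oplus S/I_{\<2\>} \to S/(\ISp_{(n-2,1)}+I_{\<2\>}) \to 0$, whose outer terms are Cohen--Macaulay of the right dimensions with known Hilbert series, so both the Cohen--Macaulayness and the Hilbert series drop out at once. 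You instead work directly with the generators $x_k(x_a-x_b)$ of $\frJ$ supplied by Lemma~\ref{varphi} and compute the graded pieces of $S/\frJ$ by hand: all non-pure-power monomials of degree $d\ge 2$ collapse to a single class $w_d$; the pure powers together with $w_d$ remain independent because every element of $\frJ$ is a combination of binomials containing no pure-power terms and having zero augmentation; and the explicit multiplication rules kill the socle. What your route buys is a self-contained, characteristic-free proof that bypasses the radicalness machinery entirely and exhibits the ring structure of $S/\frJ$ explicitly (very much in the spirit of the analysis of the algebra $A$ in the paper's proof of Theorem~\ref{3-ji shiki}); the paper's route is shorter because it reuses results already proved. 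Two small points to tighten in your write-up, both easily fixed: for $d=2$ a monomial with two-element support cannot have its support enlarged to three (both exponents equal $1$), but all squarefree quadratic monomials are already linked by single swaps $x_ix_j \equiv x_ix_l \equiv x_mx_l$ using the other variable as pivot, so the collapse claim still holds; and the socle computation should be stated so as to cover degree $1$ as well (the same identity with $b=0$ handles it), not only degree $0$ and degrees $\ge 2$.
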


\begin{proof}
As in  the previous section, set $S:= K[x_1, \ldots, x_{n-1}]$, and $\varphi: R\to S$ the natural surjection. 
Since $x_n$ is a non-zero divisor of $R/\ISp_{(n-2,2)}$,  and $R/(\ISp_{(n-2,2)}+(x_n)) \cong S/\varphi(\ISp_{(n-2,2)})$, it suffices to show that $S/\varphi(\ISp_{(n-2,2)})$ is Cohen--Macaulay. 
In the proof of Theorem~\ref{ISp_{(n-k,k)} is radical}, we have shown that $\varphi(\ISp_{(n-d, d)})$ is radical. 
Since $\varphi(\ISp_{(n-2, 2)})$ and $\ISp_{(n-2,1)}$ are  radical, we have 
$$\varphi(\ISp_{(n-2,2)})= \ISp_{(n-2,1)} \cap I_{\<2\>}$$  
by Lemma~~\ref{radical of frJ}, where $\ISp_{(n-2,1)}$ is a Specht ideal in $S$ and $I_{\<2 \>} \subset S$ is the ideal generated by all degree 2 monomials. Consider the short exact sequence
\begin{equation}\label{MV1}
0 \too S/\varphi(\ISp_{(n-2,2)}) \too S/\ISp_{(n-2,1)}  \oplus S/I_{\<2\>} \too S/(\ISp_{(n-2,1)} + I_{\<2\>})
\too 0.
\end{equation}
It is an elementally fact of the Stanley--Reisner ring theory that $S/I_{\<2\>}$ is a 1-dimensional Cohen--Macaulay ring. Moreover, we have $S/\ISp_{(n-2,1)} \cong K[X]$ and $S/(\ISp_{(n-2,1)} + I_{\<2\>}) \cong K[X]/(X^2)$.  Hence $S/\varphi(\ISp_{(n-2,2)})$ is Cohen--Macaulay by \eqref{MV1}.  

Next we will compute the Hilbert series. By basic techniques of Stanley--Reisner ring theory (see, for example  \cite[pp.212--213]{BH}), 
we have 
$$\Hilb(S/I_{\<2\>},t)= \frac{1+(n-2)t}{(1-t)}.$$
Hence, by \eqref{MV1}, 
\begin{eqnarray*}
\Hilb(S/\varphi(\ISp_{(n-2,2)}),t) &=& \Hilb(S/\ISp_{(n-2,1)},t)+\Hilb(S/I_{\<2\>},t)\\\ 
                              &&  \qquad - \Hilb(S/(\ISp_{(n-2,1)} + I_{\<2\>}),t)\\
 &=& \frac{1}{(1-t)} + \frac{1+(n-2)t}{(1-t)} -(1+t)\\
&=& \frac{1+(n-2)t+t^2}{(1-t)}. 
\end{eqnarray*}
Since $x_n$ is a non-zero divisor of $R/\ISp_{(n-2,2)}$, the Hilbert series of   $R/\ISp_{(n-2,2)}$ has the expected form. 
\end{proof}

\begin{prop}
For all $n \ge 4$, $R/\ISp_{(n-2,2)}$ is Gorenstein. 
\end{prop}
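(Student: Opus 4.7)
The plan is to reduce the problem to a one-dimensional reduced ring and then compute the Cohen--Macaulay type via the normalization and the conductor.  Since $x_n$ is a non-zero-divisor on $R/\ISp_{(n-2,2)}$ (as noted before Lemma~\ref{radical of frJ}) and the Gorenstein property passes in both directions across a homogeneous regular element, it suffices to show that $\bar A := S/\varphi(\ISp_{(n-2,2)})$ is Gorenstein.  From the proof of Proposition~\ref{I_{n,n-1} is CM}, $\bar A$ is a reduced one-dimensional Cohen--Macaulay ring equal to $S/(\ISp_{(n-2,1)} \cap I_{\<2\>})$, and geometrically $\Spec \bar A$ is the union in $\mathbb{A}^{n-1}$ of the diagonal line $\{(t,t,\ldots,t)\}$ and the $n-1$ coordinate axes, all meeting only at the origin.

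The next step is to identify the normalization as the product of $n$ polynomial rings in one variable,
$$\tilde A := K[X] \times K[x_1] \times \cdots \times K[x_{n-1}],$$
with the embedding $\bar A \hookrightarrow \tilde A$ given by $x_j \mapsto (X, 0, \ldots, 0, x_j, 0, \ldots, 0)$, where $x_j$ sits in position $j$.  A direct graded-piece calculation then shows that $\bar A_k = \tilde A_k$ for every $k \ge 2$, while $\dim_K(\tilde A_0 / \bar A_0) = n-1$ (since $\bar A_0$ is spanned by the all-ones vector) and $\dim_K(\tilde A_1 / \bar A_1) = 1$ (since the image of $\bar A_1$ is the hyperplane in $\tilde A_1$ where the diagonal coordinate equals the sum of the axis coordinates); hence $\ell_K(\tilde A / \bar A) = n$.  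For the conductor $\mathfrak{c} := \ann_{\bar A}(\tilde A/\bar A)$, the containment $\mathfrak{m}^2 \subseteq \mathfrak{c}$ is immediate because $\bar A$ and $\tilde A$ agree in degrees $\ge 2$; the reverse direction is obtained by testing $(\sum_j c_j x_j) \cdot e_i = c_i(0,\ldots,x_i,\ldots,0)$ against each indicator vector $e_i \in \tilde A_0$, which lies in $\bar A_1$ only when $c_i = 0$.  So $\mathfrak{c} = \mathfrak{m}^2$ and $\ell_K(\bar A / \mathfrak{c}) = 1 + (n-1) = n$.

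Finally, the classical formula for the Cohen--Macaulay type of a one-dimensional reduced local ring (often attributed to Serre; see, e.g., Bruns--Herzog) gives
$$r(\bar A) = \ell_K(\tilde A / \bar A) - \ell_K(\bar A / \mathfrak{c}) + 1 = n - n + 1 = 1,$$
which is exactly the Gorenstein condition.  The main technical checkpoint is the computation of the conductor; everything else is explicit linear algebra on graded pieces of $\tilde A$.  An alternative would be to pick a homogeneous regular element $\theta \in \bar A_1$ and verify directly that the socle of $\bar A/\theta\bar A$ is one-dimensional, but a linear $\theta$ with this property need not exist over small fields when $\chara(K) \mid n-1$, which makes the normalization approach preferable for a characteristic-free proof.
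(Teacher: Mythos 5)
Your route is genuinely different from the paper's and, after one repair at the very end, it works. The paper never touches the normalization: it shows $A=R/\ISp_{(n-2,2)}$ is Cohen--Macaulay with a palindromic Hilbert series, picks $0\ne a\in(\omega_A)_0$, and uses the $S_n$-action (since $(\omega_A)_0$ is one-dimensional, $\ann_A(a)$ is $S_n$-stable, hence contained in every $\bP_F$ with $\#F=n-1$, whose intersection is $0$ by radicalness) to get $\omega_A=Aa\cong A$. You instead descend modulo the regular element $x_n$ to the reduced curve $\bar A=S/(\ISp_{(n-2,1)}\cap I_{\<2\>})$, the union of the diagonal line and the $n-1$ coordinate axes, and compute the normalization and conductor explicitly; those computations are all correct: $\bar A_k=\widetilde A_k$ for $k\ge 2$, $\ell_K(\widetilde A/\bar A)=(n-1)+1=n$, $\mathfrak{c}=\mathfrak{m}^2$, and $\ell_K(\bar A/\mathfrak{c})=1+(n-1)=n$. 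What each approach buys: the paper's argument is shorter and leans on equivariance (so it adapts to other $S_n$-stable Cohen--Macaulay quotients with symmetric $h$-vector), while yours is explicit and geometric, visibly characteristic-free, and yields the conductor of the curve as a by-product.

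The genuine problem is the last step: the identity $r(\bar A)=\ell_K(\widetilde A/\bar A)-\ell_K(\bar A/\mathfrak{c})+1$ is not a theorem. For $A=K[[t^4,t^5,t^{11}]]$ one has $\ell(\widetilde A/A)=5$ (gaps $1,2,3,6,7$), $\mathfrak{c}=t^8K[[t]]$, $\ell(A/\mathfrak{c})=3$, so the formula would predict type $3$, yet the type is $2$ (the pseudo-Frobenius numbers are $6$ and $7$; equivalently the defining ideal is a Hilbert--Burch ideal of a $2\times 3$ matrix). The correct classical statement (Ap\'ery, Gorenstein, Serre; see Bass's ``ubiquity'' paper or Huneke--Swanson) is: for a one-dimensional Cohen--Macaulay local ring with finite normalization, $\ell(A/\mathfrak{c})\le\ell(\widetilde A/A)$, with equality if and only if $A$ is Gorenstein. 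Fortunately your computation delivers exactly that equality, $n=n$; since $\widetilde A/\bar A$ and $\bar A/\mathfrak{c}$ are supported only at the origin, the lengths are unchanged after localizing at the irrelevant maximal ideal, so $\bar A$ is Gorenstein, and hence so is $R/\ISp_{(n-2,2)}$ by your regular-element reduction. Replace the quoted type formula by this equality criterion and your proof is complete.
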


\begin{proof}
By  Proposition~\ref{I_{n,n-1} is CM},  $A:=R/\ISp_{(n-2,2)}$ is a Cohen--Macaulay ring admitting the canonical module 
$\omega_A$, whose  Hilbert series is also  
$$\Hilb(\omega_A,t)=\frac{1+(n-2)t+t^2}{(1-t)^2}.$$
In particular, $\dim_K (\omega_A)_0 =1$. Take $0 \ne a \in   (\omega_A)_0$.  
In the sequel, $\bP_F$ for $F \subset [n]$ denotes the prime ideal 
of $A=R/\ISp_{(n-2,2)}$ given by the quotient of $P_F \subset R$. Since 
$$\Ass_A \omega_A = \Ass_A A =\{ \bP_F \mid F \subset [n], \#F =n-1 \}, $$
there is some $F' \subset [n]$  such that  $\# F'=n-1$ and $\bP_{F'} \supset \ann_A(a)$.  
On the other hand,   the symmetric group $S_n$ also acts on $\omega_A$.  Since $a \in (\omega_A)_0 \cong K$,  $a$ is stable under  the $S_n$-action  up to scalar multiplication.   
Hence $S_n$ also acts on $ \ann_A(a)$,  and we have  $\bP_F \supset \ann_A(a)$  for all $F \subset [n]$ with $\# F=n-1$.  
Since $\bigcap_{\# F=n-1} P_F =I_{n,n-1}=\ISp_{(n-2,2)}$, 
we have $\ann_A(a) \subset  \bigcap_{\# F=n-1} \bP_F=(0)$, and hence $A \cong Aa$. Since $Aa$ and  $\omega_A$ have the same Hilbert functions, $\omega_A=Aa$, and $A$ is Gorenstein. 
\end{proof}

\begin{thm}\label{3-ji shiki}
For $n \ge 5$, $R/I_{n,n-2}$ is Cohen-Macaulay if and only if $\chara(K) \ne 2$.  
Hence, for $n \ge 6$, $R/\ISp_{(n-3,3)}$ is Cohen-Macaulay if and only if $\chara(K) \ne 2$.  
The same is true for $R/\ISp_{(2,2,1)}$. 
\end{thm}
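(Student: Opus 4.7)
The plan is to reduce the three claims to the single assertion that, for $n \ge 5$, $R/I_{n,n-2}$ is Cohen--Macaulay if and only if $\chara(K) \ne 2$, and then to attack this via a Mayer--Vietoris sequence analogous to Proposition~\ref{I_{n,n-1} is CM}. For the reductions: by Proposition~\ref{minimal primes}, $\sqrt{\ISp_{(n-3,3)}} = I_{n,n-2}$ (the hypothesis $\lambda_{l-1}=\lambda_1$ is automatic for $l=2$) and $\sqrt{\ISp_{(2,2,1)}} = I_{5,3}$. Combining this with the radicalness Theorems~\ref{ISp_{(n-k,k)} is radical} and~\ref{ISp_{(a,a,1)} is radical}, we get $\ISp_{(n-3,3)} = I_{n,n-2}$ and $\ISp_{(2,2,1)} = I_{5,3}$, so the Cohen--Macaulay question for either Specht ideal is identical to the corresponding question for $I_{n,k}$.

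For the main claim, I would follow the scheme of Proposition~\ref{I_{n,n-1} is CM}. Since $x_n$ is a non-zero divisor of $R/\ISp_{(n-3,3)}$, CM-ness is equivalent to that of $S/\varphi(\ISp_{(n-3,3)})$, where $S = K[x_1,\ldots,x_{n-1}]$. By Lemma~\ref{radical of frJ} (applied using the radicalness of $\ISp_{(n-3,3)}$ and $\ISp_{(n-3,2)}$),
$$\varphi(\ISp_{(n-3,3)}) = \ISp_{(n-3,2)} \cap I_{\<3\>}.$$
The Mayer--Vietoris short exact sequence
$$0 \too S/\varphi(\ISp_{(n-3,3)}) \too S/\ISp_{(n-3,2)} \oplus S/I_{\<3\>} \too S/(\ISp_{(n-3,2)} + I_{\<3\>}) \too 0$$
has middle term Cohen--Macaulay of dimension $2$: the summand $S/\ISp_{(n-3,2)}$ is CM by Proposition~\ref{I_{n,n-1} is CM}, and $S/I_{\<3\>}$ is the Stanley--Reisner ring of the $1$-skeleton of the $(n-2)$-simplex, a CM complex. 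Since $S/\varphi(\ISp_{(n-3,3)})$ has dimension $2$, the depth lemma reduces the theorem to deciding when $B := S/(\ISp_{(n-3,2)} + I_{\<3\>})$ is Cohen--Macaulay of dimension $1$---equivalently, has no embedded prime at the maximal ideal.

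The geometric picture of $B$ is very clean: $\sqrt{\ISp_{(n-3,2)} + I_{\<3\>}} = I_{\<2\>}$, the Stanley--Reisner ideal of the $n-1$ isolated vertices, so $B$ is the union of coordinate axes with some nilpotent thickening determined by the Specht generators of $\ISp_{(n-3,2)}$. The task is to decide when this thickening creates a socle at the origin. In char $\ne 2$, one argues that no such socle exists: the relations among the quadratic Specht generators coming from the three-term Garnir-type identities in $\Tab(n-3,2)$ allow one to solve for any candidate socle element in the ideal, essentially by an averaging/antisymmetrization over pairs that requires inverting $2$. In char $2$, by contrast, this averaging is unavailable and one constructs an explicit homogeneous element $h \in S$ with $\m \cdot h \subset \ISp_{(n-3,2)} + I_{\<3\>}$ but $h \notin \ISp_{(n-3,2)} + I_{\<3\>}$, witnessing $\depth B = 0$ and thus $\depth S/\varphi(\ISp_{(n-3,3)}) = 1 < 2$.

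The main obstacle is exactly the explicit description of $B$, particularly the construction (and verification) of the char-$2$ socle witness and the proof of its nonexistence in char $\ne 2$. This is essentially a modular-representation-theoretic phenomenon for $S_n$: the $2$-modular reduction of the Specht module $S^{(n-3,3)}$ acquires an extra composition factor that obstructs the Cohen--Macaulay property, and isolating a concrete polynomial representative of this obstruction---most likely as a sum, over an $S_n$-orbit of shape $(n-3,2)$ tableaux, of Specht-polynomial contributions that collapse modulo $2$---is the technical heart of the argument.
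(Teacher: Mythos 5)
Your reductions are fine and coincide with the paper's: radicalness (Theorems~\ref{ISp_{(n-k,k)} is radical}, \ref{ISp_{(a,a,1)} is radical}) plus Proposition~\ref{minimal primes} identify $\ISp_{(n-3,3)}$ and $\ISp_{(2,2,1)}$ with $I_{n,n-2}$ and $I_{5,3}$, and the Mayer--Vietoris sequence for $\varphi(I_{n,n-2}) = \ISp_{(n-3,2)} \cap I_{\<3\>}$ correctly reduces everything to whether the one-dimensional ring $B := S/(\ISp_{(n-3,2)} + I_{\<3\>})$ is Cohen--Macaulay. But from that point on you have only described what a proof would have to do, not done it: the ``averaging/antisymmetrization that requires inverting $2$'' in characteristic $\ne 2$ and the ``explicit socle witness'' in characteristic $2$ are precisely the technical heart, and you acknowledge you have neither. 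This is a genuine gap, because the charactistic-dependence is not visible from the geometric picture of $B$ (a thickened union of axes); it only emerges from an explicit computation of the graded pieces of $B$, which your proposal never attempts.

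Concretely, what is missing and what the paper supplies is: (a) a description of $B_m$ for $m \ge 2$ --- using relations such as $x_i^2x_j - x_i^2x_k,\ x_i^2x_j^2 \in \ISp_{(n-3,2)} + I_{\<3\>}$, one shows $B_m$ (for $m \ge 3$) is spanned by the classes $\overline{x_i^m}$ and $\overline{x_i^{m-1}x_j}$ (the latter independent of $j$), and these $2(n-1)$ elements are linearly independent by an evaluation argument, so $\dim_K B_m = 2(n-1)$; (b) in characteristic $\ne 2$, an actual regular element: $e_1 = x_1+\cdots+x_{n-1}$ works, and the only delicate degree is $B_2 \to B_3$, where the identity $e_1\cdot(-\overline{x_jx_k}+\overline{x_kx_i}+\overline{x_ix_j}) = 2\overline{x_i^2x_j}$ gives surjectivity (this is where $2$ must be invertible), and injectivity then follows from $\dim B_2 = \dim B_3 = 2(n-1)$; (c) in characteristic $2$, the socle element is simply $\overline{x_1x_2}+\overline{x_2x_3}+\overline{x_3x_1} \in B_2$ --- a sum of monomial classes, not a sum of Specht polynomials over an $S_n$-orbit as you speculated --- which is killed by every $x_i$ exactly because the displayed identity produces the factor $2$. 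Without items (a)--(c), or some substitute for them, your argument establishes only the (characteristic-free) reduction to $B$ and does not prove the stated equivalence; your appeal to $2$-modular composition factors of $S^{(n-3,3)}$ is a heuristic, not a proof, and in particular does not by itself yield $\depth B \ge 1$ when $\chara(K)\ne 2$.
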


\begin{proof}
By virtue of Theorems~\ref{ISp_{(n-k,k)} is radical} and \ref{ISp_{(a,a,1)} is radical}, the second and third statements follow from the first, so it suffices to show the first. 

As above, let $S:=K[x_1, \ldots, x_{n-1}]$ be the polynomial ring and  $\varphi: R \to S$ the natural surjection. 
Note that  $I_{5, 3} =\ISp_{(2,2,1)}$ and $I_{n,n-2} =\ISp_{(n-3,3)}$ for $n \ge 6$. Set $\mu :=(n-3,2)$. Then $\ISp_\mu \subset S$ is radical by Theorem~\ref{ISp_{(n-k,k)} is radical}, and we have 
$$\varphi(I_{n,n-2}) = \ISp_\mu \cap I_{\<3\>}$$
by Lemma~\ref{radical of frJ}. 
We consider the exact sequence 
\begin{equation}\label{MV3}
0 \too S/\varphi(I_{n,n-2}) \too S/\ISp_\mu  \oplus S/I_{\<3\>} \too S/(\ISp_\mu + I_{\<3\>} )
\too 0.
\end{equation}
Since $\ISp_\mu$ and $I_{\<3\>}$ have no associated prime in common, $A:=S/(\ISp_\mu + I_{\<3\>} )$ has dimension 1. 
Since $S/\ISp_\mu$ and  $S/I_{\<3\>}$  are 2-dimensional Cohen--Macaulay rings, 
$R/I_{n,n-2}$ is Cohen--Macaulay, if and only if so is $S/\varphi(I_{n,n-2})$, if and only if  
so is $A$. So  it suffices to show that $A$ is Cohen--Macaulay if and only if $\chara(K) \ne 2$.  
Now let us analyze the structure of $A$.  

For distinct $i,j,k \in [n-1]$, we have 
$x_ix_jx_k \in I_{\<3\>} \subset  \ISp_\mu + I_{\<3\>}$. 
Since  $(x_i -x_l)(x_j-x_k) \in \ISp_\mu$ for distinct  $i,j,k,l \in [n-1]$, we have  
$$x_i^2x_j -x_i^2x_k = x_i(x_i -x_l)(x_j-x_k) +x_ix_lx_j-x_ix_lx_k \in  \ISp_\mu + I_{\<3\>}.$$
Similarly, considering  $x_ix_j(x_i -x_l)(x_j-x_k)$, we have $x_i^2x_j^2 \in  \ISp_\mu+I_{\<3\>}$. 
So non-zero  monomials of $A_m$ for $m \ge 2$ are of the form $\overline{x_i^m}$ or $\overline{x_i^{m-1}x_j}$.  Moreover,  $\overline{x_i^{m-1}x_j}=\overline{x_i^{m-1}x_k}$, if $i \ne j, k$. 
Hence, for $m \ge 3$, the homogeneous component $A_m$ is spanned by  $2(n-1)$  
elements 
$$\overline{x_1^m}, \ \ldots, \overline{x_{n-1}^m},  \ \overline{x_1^{m-1}x_2}, \   \overline{x_2^{m-1}x_3}, \ \ldots, \ \overline{x_{n-2}^{m-1}x_{n-1}}, \ \overline{x_1x_{n-1}^{m-1}}.$$ 
For simplicity, let 
$$\alpha_1^{(m)}, \ldots, \alpha_{n-1}^{(m)}, \beta_1^{(m)},  \beta_2^{(m)}, \ldots ,  \beta_{n-1}^{(m)}$$
denote these $2(n-1)$ elements.  

To show that $\alpha_1^{(m)}, \ldots, \alpha_{n-1}^{(m)}, \beta_1^{(m)},  \ldots \beta_{n-1}^{(m)}$ are linearly independent, assume that  
$$c_1 \alpha_1^{(m)} + \cdots + c_{n-1}\alpha_{n-1}^{(m)} + d_1 \beta_1^{(m)} +  d_2 \beta_2^{(m)} + \cdots + d_{n-1} \beta_{n-1}^{(m)}=0$$
for $c_1, \ldots, c_{n-1}, d_1, \ldots, d_{n-1} \in K$. Then we have  
$$
c_1 x_1^m +  \cdots + c_{n-1}x_{n-1}^m + d_1 x_1^{m-1}x_2+ \cdots + d_{n-1}x_1x_{n-1}^{m-1} \in \ISp_\mu + I_{\<3\>}.   
$$
Hence there is a degree $m$ element $f \in I_{\<3\>}$ such that 
\begin{equation}\label{linear relation}
c_1 x_1^m +  \cdots + c_{n-1}x_{n-1}^m + d_1 x_1^{m-1}x_2+ \cdots + d_{n-1}x_1x_{n-1}^{m-1} +f \in \ISp_\mu.  
\end{equation}
For any $a \in K$, if we put $x_1 =a$ and $x_i =1$ for all $i \ne 1$ 
then the left side of \eqref{linear relation} becomes 0 (all elements in  $\ISp_\mu$ 
have this property). Hence, for  any $a \in K$, $x_1 =a$ is a root of the equation 
\begin{equation}\label{m-1 ji houteisiki}
c_1 x_1^m + d_1 x_1^{m-1} + (\text{lower degree terms})=0.
\end{equation}
Since $\# K =\infty$, the left side of  \eqref{m-1 ji houteisiki} is the zero polynomial, and $c_1 =d_1 =0$. Similarly, we have $c_i=d_i=0$ for all $i$. It means that $\alpha_1^{(m)}, \ldots, \alpha_{n-1}^{(m)}, \beta_1^{(m)}, \ldots \beta_{n-1}^{(m)}$ are linearly independent. Hence they form a basis of $A_m$, in particular, 
we have $\dim_K A_m=2(n-1)$ for all $m \ge 3$.

\medskip

\noindent \underline{When $\chara(K) \ne 2$:} 
We will show that $e_1 = x_1 + x_2 + \cdots +x_{n-1}$ is $A$-regular. 
Since $e_1$ is clearly  $(S/\ISp_\mu)$-regular, and  $I_{\<3\>}$ is generated by degree 3 elements, we have $e_1 y \ne 0$ for all $0 \ne y \in A_m$ with $m=0,1$. 
Since we have 
$$e_1 \alpha_i^{(m)} = \alpha_i^{(m+1)}+(n-2)\beta_i^{(m+1)}, \qquad e_1 \beta_i^{(m)} =  \beta_i^{(m+1)}$$
for each $1 \le i \le n-1$, it is easy to see that $e_1 y \ne 0$ for all $0 \ne y \in A_m$ with $m \ge 3$. So it remains to show the case $ 0 \ne  y \in A_2$. Consider the $K$-linear map 
$$f: A_2 \ni y \longmapsto e_1 y \in A_3.$$
For  distinct  $i, j \in [n-1]$, we have 
$$f(\overline{x_i^2})= \alpha_i^{(3)}+ (n-2)\beta_i^{(3)}, \qquad f(\overline{x_ix_j})=\beta_i^{(3)} + \beta_j^{(3)}.$$ 
Hence, for distinct $i, j, k \in [n-1]$, we have 
$$f(-\overline{x_jx_k} + \overline{x_kx_i} + \overline{x_i x_j})= -\beta_j^{(3)} -\beta_k^{(3)}+\beta_k^{(3)} + \beta_i^{(3)}
+\beta_i^{(3)} + \beta_j^{(3)}= 2\beta_i^{(3)}.$$
Since  $\chara(K) \ne 2$ now, it follows that  
$\beta_i^{(3)} \in \Im f$,  and hence $\alpha_i^{(3)} \in \Im f$.  So $f$ is surjective.  
On the other hand, we have $A_2=(S/\ISp_\mu)_2$, and the Hilbert series of 
$S/\ISp_\mu$ is given in Proposition~\ref{I_{n,n-1} is CM}  (of course, we should replace $n$ by $n-1$), so we have $$\dim A_2= \dim (S/\ISp_\mu)_2 =2n-2 =\dim A_3.$$
It follows that the surjective map $f$ is also injective, and hence   $e_1 y \ne 0$ for all $0 \ne y \in A_2$. Summing up, $e_1$ is   $A$-regular. 
Since $\dim A=1$, $A$ is Cohen-Macaulay.  

\medskip

\noindent \underline{When $\chara(K) = 2$:} 
Set $y:=\overline{x_1x_2}+\overline{x_2x_3}+\overline{x_3x_1} \in A_2$. Clearly,  $y \ne 0$.  For $i \ge 4$, we have $x_iy=0$. On the other hand, for   $i=1,2,3$, we also have $x_iy= 2\beta_i^{(3)}=0$ now. 
Hence $y$ is a  non-zero socle element, and  $\depth A=0$. It means that $A$ is  {\it not} Cohen--Macaulay. 
\end{proof}

\begin{rem} {\it Macaulay2} shows that the Betti diagram of $R/\ISp_{(3,3)} \, (=R/I_{6,4})$ is  
\begin{verbatim}
total: 1 5 9 5
          0: 1 . . .
          1: . . . .
          2: . 5 . .
          3: . . 9 5
\end{verbatim}
if $\chara(K) =0$ (actually, if $\chara(K) \ne 2$), and  
\begin{verbatim}
 total: 1 5 9 6 1
          0: 1 . . . .
          1: . . . . .
          2: . 5 . . .
          3: . . 9 5 1
          4: . . . 1 .
\end{verbatim}
if $\chara(K) = 2$. 
\end{rem}

The computer experiments suggest the following conjectures. We have to say that the computation of Specht ideals is very heavy, so we do not have so much experience.  

\begin{conj}
Let $\lambda=(\lambda_1, \ldots, \lambda_l)$ be a partition of $n$ satisfying the condition (2) or (3) of Proposition~\ref{CM ISP}. Then $R/\ISp_\lambda$ is Coehn--Macaulay if and only if $\chara(K)=0$ or $\chara(K) \ge n-\lambda_1$. 
\end{conj}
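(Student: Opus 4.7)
The plan is to first reduce to the first statement alone. By Proposition~\ref{minimal primes}, $\sqrt{\ISp_{(n-3,3)}} = I_{n,n-2}$ for $n \ge 6$ and $\sqrt{\ISp_{(2,2,1)}} = I_{5,3}$, and Theorems~\ref{ISp_{(n-k,k)} is radical} and~\ref{ISp_{(a,a,1)} is radical} upgrade these to honest equalities; so only $R/I_{n,n-2}$ needs to be analyzed. Since $x_n$ is a non-zero divisor on $R/I_{n,n-2}$ (by the change of variables $y_i := x_i - x_n$ discussed before Lemma~\ref{radical of frJ}), this reduces to the CM-ness of $S/\varphi(I_{n,n-2})$, where $\varphi : R \to S := K[x_1, \ldots, x_{n-1}]$ is reduction modulo $x_n$.

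Lemma~\ref{radical of frJ} applied with $\lambda = (n-3,3)$ and $\mu = (n-3,2)$ gives $\varphi(I_{n,n-2}) = \ISp_\mu \cap I_{\<3\>}$, feeding into the Mayer--Vietoris sequence
$$0 \too S/\varphi(I_{n,n-2}) \too S/\ISp_\mu \oplus S/I_{\<3\>} \too A \too 0, \qquad A := S/(\ISp_\mu + I_{\<3\>}).$$
The middle term is $2$-dimensional and Cohen--Macaulay ($S/\ISp_\mu$ by Proposition~\ref{I_{n,n-1} is CM} applied to $n-1$ variables, and $S/I_{\<3\>}$ as the Stanley--Reisner ring of the $1$-skeleton of the simplex on $n-1$ vertices). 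Since $\ISp_\mu$ and $I_{\<3\>}$ have no associated prime in common, $\dim A = 1$, and a local-cohomology chase on the sequence shows that the CM-ness of $S/\varphi(I_{n,n-2})$ is equivalent to the CM-ness of the one-dimensional $A$. The problem is thereby reduced to deciding when $A$ has positive depth.

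The next step is to exhibit a monomial basis of $A$ in each degree. The relations $x_ix_jx_k \in I_{\<3\>}$ and $(x_i-x_l)(x_j-x_k) \in \ISp_\mu$ together imply $\overline{x_i^2 x_j} = \overline{x_i^2 x_k}$ for all $k \ne i$, and also $\overline{x_i^2 x_j^2} = 0$. Consequently, for every $m \ge 3$, the component $A_m$ is spanned by the $2(n-1)$ elements
$$\alpha_i^{(m)} := \overline{x_i^m}, \qquad \beta_i^{(m)} := \overline{x_i^{m-1} x_j} \quad (j \ne i, \text{ value independent of } j).$$
I would prove their linear independence by substitution: from any purported relation, setting $x_1 = a \in K$ and $x_i = 1$ for $i \ne 1$ kills all of $\ISp_\mu$ and yields a degree-$m$ polynomial in $a$ which must vanish identically (since $K$ is infinite), forcing the coefficients of $\alpha_1^{(m)}$ and $\beta_1^{(m)}$ to vanish; the symmetric variants handle the remaining indices.

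Finally, since $\dim A = 1$, CM-ness amounts to finding a regular linear form. I would test $e_1 := x_1 + \cdots + x_{n-1}$. Direct computation on the basis gives
$$e_1 \alpha_i^{(m)} = \alpha_i^{(m+1)} + (n-2)\beta_i^{(m+1)}, \qquad e_1 \beta_i^{(m)} = \beta_i^{(m+1)},$$
so multiplication by $e_1$ acts by a block-diagonal unipotent transformation indexed by $i$ and is injective in every degree $m \ge 3$. The only delicate transition is $A_2 \to A_3$: because $I_{\<3\>}$ lives in degree $\ge 3$, $A_2 = (S/\ISp_\mu)_2$ has dimension $2(n-1) = \dim A_3$ (from the Hilbert series in Proposition~\ref{I_{n,n-1} is CM}), so regularity on this step is equivalent to surjectivity. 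The identity
$$e_1 \cdot \bigl(-\overline{x_jx_k} + \overline{x_kx_i} + \overline{x_ix_j}\bigr) = 2\beta_i^{(3)}$$
is then decisive: in characteristic $\ne 2$ it produces every $\beta_i^{(3)}$ and hence (via the first formula) every $\alpha_i^{(3)}$, giving surjectivity; while in characteristic $2$ the element $y := \overline{x_1x_2 + x_2x_3 + x_3x_1} \in A_2$ is annihilated by every $x_i$ (by $I_{\<3\>}$ for $i \ge 4$, and by the displayed identity for $i \in \{1,2,3\}$), producing a nonzero socle of $A$ in degree $2$. The main obstacle in this plan is establishing the characteristic-free linear independence of the $\alpha_i^{(m)}$, $\beta_i^{(m)}$ and the dimension count $\dim A_2 = \dim A_3 = 2(n-1)$; once these are in hand, the factor of $2$ in the displayed identity forces the dichotomy.
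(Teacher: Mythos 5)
You have not proved the statement you were assigned: in the paper this assertion is only a \emph{conjecture}, supported by computer experiments, and your proposal does not supply a proof of it either. What you actually argue is the single special case $n-\lambda_1=3$, i.e.\ $\lambda=(n-3,3)$ and $\lambda=(2,2,1)$, where the asserted dichotomy collapses to ``Cohen--Macaulay iff $\chara(K)\ne 2$''. For that case your argument is essentially the paper's own proof of Theorem~\ref{3-ji shiki}: reduction modulo the regular element $x_n$, the identification $\varphi(I_{n,n-2})=\ISp_{(n-3,2)}\cap I_{\<3\>}$ via Lemma~\ref{radical of frJ} and Theorem~\ref{ISp_{(n-k,k)} is radical}, the Mayer--Vietoris sequence, the spanning set $\alpha_i^{(m)},\beta_i^{(m)}$ of $A_m$, the identity $e_1\bigl(-\overline{x_jx_k}+\overline{x_kx_i}+\overline{x_ix_j}\bigr)=2\beta_i^{(3)}$, and the socle element $\overline{x_1x_2}+\overline{x_2x_3}+\overline{x_3x_1}$ in characteristic $2$. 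That part is correct, but it reproves Theorem~\ref{3-ji shiki}, not the conjecture.

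The conjecture concerns \emph{all} two-row shapes $(n-d,d)$ and all $(a,a,1)$, and there your method gives nothing. For the ``if'' direction you would need Cohen--Macaulayness in every positive characteristic $p\ge n-\lambda_1$; Theorem~\ref{EGL} is a characteristic-zero statement, so radicality plus \cite{EGL} cannot be invoked, and your explicit analysis of $A=S/(\ISp_\mu+I_{\<d\>})$ breaks down once $d\ge 4$: then $\dim A=d-2\ge 2$, so exhibiting one regular linear form no longer settles depth, and the monomial description by $\alpha_i^{(m)},\beta_i^{(m)}$ (which relied on $x_ix_jx_k\in I_{\<3\>}$ and on $\dim A=1$) no longer applies. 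For the ``only if'' direction you would need an obstruction to Cohen--Macaulayness in each characteristic $0<p<n-\lambda_1$, and your proposal offers no candidate socle or local-cohomology class beyond the characteristic-two one. So there is a genuine gap of scope: what you establish is a theorem the paper already proves, while the general statement remains open there and in your proposal.
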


\begin{conj}
If $\chara(K)=0$, $R/\ISp_{(a,a,1)}$ has an $(a+2)$-linear resolution. 
\end{conj}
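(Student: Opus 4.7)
The plan is to reduce the conjecture to a regularity calculation and then attack it via Mayer--Vietoris. Since $R/\ISp_{(a,a,1)}$ is Cohen--Macaulay of dimension $a+1$ by Corollary~\ref{ISp_{(a,a,1)} is CM}, and each Specht polynomial $f_T$ of shape $(a,a,1)$ has degree $a+2$ (one column of length~$3$ contributing~$3$, and $a-1$ columns of length~$2$ contributing~$1$ each), the conjectured $(a+2)$-linear resolution is equivalent to $\reg(R/\ISp_{(a,a,1)}) = a+1$, or equivalently that the $h$-polynomial of this Cohen--Macaulay quotient has degree exactly~$a+1$. Using that $x_n$ is a nonzerodivisor modulo $\ISp_{(a,a,1)}$, I would further reduce to proving $\reg(S/\varphi(\ISp_{(a,a,1)})) = a+1$, where $S = K[x_1,\ldots,x_{n-1}]$.

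The main tool will be the Mayer--Vietoris short exact sequence arising from the identity $\varphi(\ISp_{(a,a,1)}) = \ISp_{(a,a)} \cap I_{\<a+1\>}$, which is a consequence of Lemma~\ref{radical of frJ} combined with Theorems~\ref{ISp_{(n-k,k)} is radical} and~\ref{ISp_{(a,a,1)} is radical}:
$$0 \too S/\varphi(\ISp_{(a,a,1)}) \too S/\ISp_{(a,a)} \oplus S/I_{\<a+1\>} \too S/(\ISp_{(a,a)} + I_{\<a+1\>}) \too 0.$$
The ideal $I_{\<a+1\>}$ has an $(a+1)$-linear resolution by Eagon--Reiner, since its Alexander dual is the shellable $(a-1)$-skeleton of a simplex. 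To control $S/\ISp_{(a,a)}$, I would in parallel prove the companion assertion that $\ISp_{(n-d,d)}$ has a $d$-linear resolution in characteristic zero for every $d \ge 1$, by induction on $d$ using the analogous Mayer--Vietoris decomposition supplied by Lemma~\ref{radical of frJ} applied to $(n-d,d)$; the base case $d=1$ is the Eagon--Northcott resolution of the Vandermonde ideal used in~\cite{WY}. The long exact sequence of $\operatorname{Tor}$ attached to the displayed Mayer--Vietoris sequence would then give $\reg(S/\varphi(\ISp_{(a,a,1)})) \le \max\{a-1,\,a,\,\reg(S/(\ISp_{(a,a)}+I_{\<a+1\>}))+1\}$, while the matching lower bound $\reg(S/\varphi(\ISp_{(a,a,1)})) \ge a+1$ holds automatically since $\varphi(\ISp_{(a,a,1)})$ is generated in degree $a+2$.

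The main obstacle will be showing $\reg(S/(\ISp_{(a,a)}+I_{\<a+1\>})) = a$. A useful starting observation is that every monomial appearing in a Specht polynomial of shape $(a,a)$ is squarefree of degree $a$, so $\ISp_{(a,a)} \subset I_{\<a\>}$ and hence $\sqrt{\ISp_{(a,a)}+I_{\<a+1\>}} = I_{\<a\>}$; geometrically, a point of $K^{2a}$ with support of size at most $a$ and with at least $a+1$ equal coordinates is forced to have those $a+1$ coordinates equal to zero. Thus $S/(\ISp_{(a,a)}+I_{\<a+1\>})$ has the same reduction as the Stanley--Reisner ring of the $(a-2)$-skeleton of a simplex, whose regularity equals $a-1$, and the delicate point is to control the nilpotent enlargement. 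I would handle this by an explicit monomial reduction in the style of the proof of Theorem~\ref{3-ji shiki}, aiming to show that every element of $S$ of degree $\ge a+1$ already lies in $\ISp_{(a,a)}+I_{\<a+1\>}$. Once this is verified, assembling the three regularity estimates through the long exact sequence of $\operatorname{Tor}$ will force the Betti numbers of $S/\varphi(\ISp_{(a,a,1)})$ into the linear strand, yielding the desired $(a+2)$-linear resolution. An alternative route, sidestepping the companion assertion about $\ISp_{(n-d,d)}$, would apply the Bj\"orner--Wachs shellability of the $k$-equal partition lattice together with the Goresky--MacPherson formula to compute the Betti numbers of $R/I_{n,a+1}$ directly.
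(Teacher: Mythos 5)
You should first note that the paper does not prove this statement at all: it is recorded only as a conjecture supported by computer experiments, so there is no proof of the paper's to compare with, and the question is whether your sketch closes it. It does not, because two of its key intermediate claims are false. The ``companion assertion'' that $\ISp_{(n-d,d)}$ has a $d$-linear resolution in characteristic zero already fails for $d=2$: by Proposition~\ref{I_{n,n-1} is CM} the Cohen--Macaulay ring $R/\ISp_{(n-2,2)}$ has $h$-polynomial $1+(n-2)t+t^2$, hence $\reg (R/\ISp_{(n-2,2)})=2$, whereas a $2$-linear resolution would force regularity $1$; likewise the Betti diagram of $R/\ISp_{(3,3)}$ displayed in the paper's final Remark has its last syzygies outside the linear strand. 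So the induction you propose to control $S/\ISp_{(a,a)}$ collapses at its first nontrivial step. (What your Mayer--Vietoris bound actually needs is the weaker estimate $\reg (S/\ISp_{(a,a)})\le a$, which the data for $a=2,3$ suggest is sharp, but you give no argument for it and it cannot come from the false linear-resolution claim.)

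The step you yourself call the main obstacle is also unattainable as stated: you cannot show that every element of degree $\ge a+1$ lies in $\ISp_{(a,a)}+I_{\<a+1\>}$, because, as you observe, $\sqrt{\ISp_{(a,a)}+I_{\<a+1\>}}=I_{\<a\>}$, so this quotient has Krull dimension $a-1\ge 1$ and is nonzero in every degree; for instance no power of $x_1$ lies in the ideal, and for $a=2$ the proof of Theorem~\ref{3-ji shiki} exhibits the analogous ring $A$ with $\dim_K A_m=2(n-1)$ for all $m\ge 3$. More generally, for a module of positive dimension Castelnuovo--Mumford regularity cannot be read off from vanishing of graded pieces, so even the correct target $\reg \bigl(S/(\ISp_{(a,a)}+I_{\<a+1\>})\bigr)\le a$ requires a genuinely different argument (local cohomology, or a filtration of the sum). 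The alternative route you mention is also not a proof: the Goresky--MacPherson formula computes the topology of the complement of the subspace arrangement, not the graded Betti numbers of $R/I_{2a+1,a+1}$ over $K$. Your overall frame --- reduce modulo the nonzerodivisor $x_n$, use $\varphi(\ISp_{(a,a,1)})=\ISp_{(a,a)}\cap I_{\<a+1\>}$ (which the paper's Lemma~\ref{radical of frJ} and Theorems~\ref{ISp_{(n-k,k)} is radical}, \ref{ISp_{(a,a,1)} is radical} do justify), and chase regularity through the Mayer--Vietoris sequence --- is sensible and consistent with the paper's methods, but the two inputs above are exactly the open content of the conjecture, and your proposed proofs of both are incorrect.
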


\section*{Acknowledgements} 
This project first started as a collaboration with Professor Junzo Watanabe. However, we peacefully dissolved the collaboration, since we found that each of us were interested in  different aspects of this topic. 
I would like to thank him for valuable comments and discussion （for example, he told me Lemma~\ref{radical reduction}). 

I am also grateful to Kenshi Okiyama, who was my undergraduate student, for indispensable help to  computer experiment in the beginning of this study.   I also thank Professor Satoshi Matsumoto of Tokai university for    computer experiment in the later stage. 
I am also grateful to Professor Mitsuyasu Hashimoto for valuable  comments.

\end{document}